\newtheorem{thm}{Theorem}[section]
 \newtheorem{cor}{Corollary}[section]
 \newtheorem{lem}{Lemma}[section]
 \newtheorem{prop}{Proposition}[section]
 \newtheorem{defn}{Definition}[section]
\newtheorem{rem}{Remark}[section]
\def\Id{{\rm Id}\,}
\def\d{\partial}
\def\ddj{\dot \Delta_j}
\def\tilde{\widetilde}
\def\hat{\widehat}
\newcommand\R{\mathbb{R}}
\newcommand\Z{\mathbb{Z}}
\newcommand{\N}{\mathbb{N}}
\renewcommand{\div}{\mbox{\rm div}\;\!}
\begin{document}
\title[compressible Navier-Stokes-Poisson system]{A sharp time-weighted inequality for the
compressible Navier-Stokes-Poisson system in the critical $L^{p}$ framework}

\author{Wei Xuan Shi}
\address{Department of Mathematics,  Nanjing
University of Aeronautics and Astronautics,
Nanjing 211106, P.R.China,}
\email{wxshi168@163.com}

\author{Jiang Xu}
\address{Department of Mathematics,  Nanjing
University of Aeronautics and Astronautics,
Nanjing 211106, P.R.China,}
\email{jiangxu\underline{ }79math@yahoo.com}

\subjclass{35Q30,35Q35,76N15}
\keywords{Compressible Navier-Stokes-Poisson system; decay estimates; critical Besov spaces}

\begin{abstract}
The compressible Navier-Stokes-Poisson system takes the form of usual Navier-Stokes equations coupled with the self-consistent Poisson equation, which is used to simulate the transport of charged particles under the electric field of electrostatic potential force. In this paper, we focus on the large time behavior
of global strong solutions in the $L^{p}$ Besov spaces of critical regularity. By exploring the dissipative effect arising from Poisson potential, we posed the new regularity assumption of low frequencies and then establish a \textit{sharp} time-weighted inequality, which leads to the optimal time-decay estimates of the solution. Indeed, we see that the decay of density is faster at the half rate than that of velocity, which is a different ingredient in comparison with the situation of usual Navier-Stokes equations. Our proof mainly depends on tricky and non classical Besov product estimates with respect to various Sobolev embeddings.
\end{abstract}

\maketitle

\section{Introduction} \setcounter{equation}{0}
It is well known that the compressible Navier-Stokes-Poisson system can be used to simulate the transport of charged particles in semiconductor devices
under the influence of electric fields (see for example \cite{MRS} for more explanations). The barotropic compressible Navier-Stokes-Poisson system is given by
\begin{equation} \label{Eq:1.1}
\left\{
\begin{array}{l}
\partial _{t}\varrho +\mathrm{div}\left( \varrho u\right) =0, \\
\partial _{t}( \varrho u) +\mathrm{div}\left( \varrho u\otimes u\right) +\nabla P\left(\varrho\right)=\mathrm{div}\left( 2\mu\, D(u) +\lambda\,\mathrm{div}\,u\,\mathrm{Id}\right)- \varrho \nabla \psi,\\
-\Delta \psi=\varrho-\varrho_{\infty}.
\end{array}
\right.
\end{equation}
Here,  $\varrho =\varrho (t,x)\in \R_{+}$ and $u=u(t,x)\in \mathbb{R}^{d} (d\geq2)$ are the unknown functions on $[0, +\infty)\times \mathbb{R}^{d}$, which stands for the density and velocity field of charged particles, respectively.  $\psi=\psi(t,x)$ denotes the electrostatic potential force.
The barotropic assumption means that the pressure $P$ is given suitably smooth function of $\varrho$.
The notation $D(u)\triangleq\frac{1}{2}(D_{x} u+{}^T\!D_{x} u)$ stands for  the {\it deformation tensor}, and $\nabla$ and $\div$ are the gradient and divergence operators with respect to the space variable. The Lam\'{e} coefficients $\lambda$ and $\mu$
(the \emph{bulk} and \emph{shear viscosities}) are density-dependent functions, which are supposed to be smooth functions of density and to satisfy $\mu>0$ and $\lambda +2\mu>0$. The initial condition of System \eqref{Eq:1.1} is prescribed by
\begin{equation}\label{Eq:1.2}
\left(\varrho,u\right)|_{t=0}=\left(\varrho _{0}(x),u_{0}(x)\right),\  x\in \R^{d}.
\end{equation}

In this paper, we focus on the Cauchy problem \eqref{Eq:1.1}-\eqref{Eq:1.2} and investigate the asymptotic behavior of global solutions, as initial data are the perturbation of constant equilibrium $\left(\varrho _{\infty},0\right)$ with $\varrho _{\infty}>0$. First of all, let us take a look at the spectral analysis briefly, which was made by Li, Matsumura \& Zhang \cite{LMZ}. It is not difficult to check that the linearized system (around the equilibrium) reads
\begin{equation*}
\left\{
\begin{array}{l}
\partial _{t} a +\varrho_{\infty}\mathrm{div}\,u=0, \\
\partial _{t}u-\frac{\mu_{\infty}}{\varrho_{\infty}}\Delta u-\frac{\mu_{\infty+\lambda_{\infty}}}{\varrho_{\infty}}\nabla \mathrm{div}u
+\frac{P'(\varrho_{\infty})}{\varrho_{\infty}}\nabla a +\nabla (-\Delta)^{-1} a=0
\end{array}
\right.
\end{equation*}
with $a=\varrho-\varrho_{\infty}$, $\mu_{\infty}=\mu(\varrho_{\infty})$ and $\lambda_{\infty}=\lambda(\varrho_{\infty})$. It follows from \cite{LMZ} that
\begin{equation*}
\hat{a}(t,\xi)\sim \left\{
\begin{array}{l}
e^{-c|\xi|^{2}t}\left(|\hat{a}_{0}|+\left|\xi\right| |\hat{u}_{0}|\right), |\xi|\rightarrow 0;\\
e^{-ct}\left(|\hat{a}_{0}|+|\hat{u}_{0}|\right), \, |\xi|\rightarrow\infty
\end{array}
\right.
\end{equation*}
and
\begin{equation*}
\hat{u}(t,\xi)\sim \left\{
\begin{array}{l}e^{-c|\xi|^{2}t}\left(|\xi|^{-1} |\hat{a}_{0}|+|\hat{u}_{0}|\right), |\xi|\rightarrow 0;\\
e^{-ct}\left(|\hat{a}_{0}|+|\hat{u}_{0}|\right),  |\xi|\rightarrow\infty,
\end{array}
\right.
\end{equation*}
where $c>0$ is a generic constant which depends only on $\varrho_{\infty}, \mu_{\infty}$ and $\lambda_{\infty}$. If the initial data
$\left(\varrho_{0}-\varrho_{\infty}, u_{0}\right)\in L^{2}(\mathbb{R}^{3})\cap L^{q}(\mathbb{R}^{3})(q\in [1,2])$, then one can
conclude the optimal decay rates of $L^{2}$-$L^{q}$ type:
\begin{equation}\label{Eq:1.3}
\|(\varrho-\varrho_{\infty})(t)\|_{L^{2}}\leq C \langle t\rangle^{-\frac{3}{2}(\frac{1}{q}-\frac{1}{2})}, \
\|u(t)\|_{L^{2}}\leq C \langle t\rangle^{-\frac{3}{2}(\frac{1}{q}-\frac{1}{2})+\frac{1}{2}} \ \mbox{with}\  \langle t \rangle\triangleq \sqrt{1+t^{2}}.
\end{equation}
Obviously, due to the presence of the electric potential, there is a decay difference between the density and velocity field. However, we see that
the $L^{2}$ norm of the velocity $u$ grows in time at the rate $\langle t\rangle^{\frac{1}{2}}$ possibly if taking $q=2$ in \eqref{Eq:1.3}, which seems to be
a contradiction with the dissipativity of Navier-Stokes-Poisson equations. To eliminate the obstacle, a stronger assumption, say $\Lambda^{-1}(\varrho_{0}-\varrho_{\infty})\in L^{2}$, was posed in \cite{CD2,WYJ}, which leads to the substituting decay in comparison with \eqref{Eq:1.3}:
\begin{equation*}
\left\|\left(\varrho-\varrho_{\infty}\right)(t)\right\|_{L^{2}}\leq C \langle t\rangle^{-\frac{3}{2}(\frac{1}{q}-\frac{1}{2})-\frac{1}{2}}
\ \ \hbox{and}\ \
\left\|u(t)\right\|_{L^{2}}\leq C \langle t\rangle^{-\frac{3}{2}(\frac{1}{q}-\frac{1}{2})}.
\end{equation*}
We would like to mention that $\Lambda^{-1}(\varrho_{0}-\varrho_{\infty})\in L^{2}$ is equivalent to $\nabla\psi_{0}\in L^{2}$, which is a natural condition from the point of view of the energy approach. The reader is referred to \cite{X1} for the similar situation. In short, the electric potential may enhance the dissipation of density such that it enjoys addition half time-decay rate than velocity, with the \textit{relatively} stronger assumption than \cite{LMZ}.
Up to now, there are a number of efforts dedicated to the large-time behavior of solutions to \eqref{Eq:1.1}-\eqref{Eq:1.2}, see \cite{LMZ,LZ,WYJ,WW1,WW2} and references therein.

Let us emphasize that above results mentioned are established in Sobolev spaces with higher regularity. Our aim of this paper is to
prove (more accurate) decay estimates in the critical regularity Besov spaces, which is close to the framework of weak solutions.  Indeed, observe that \eqref{Eq:1.1} is obviously invariant for all $\lambda>0$ by the following transformation
\begin{equation*}
\varrho ( t,x) \rightsquigarrow \varrho \left( \lambda^{2}t,\lambda x\right) ,\ \ u(t,x) \rightsquigarrow \lambda u\left( \lambda^{2}t,\lambda x\right) ,\ \ \psi(t,x) \rightsquigarrow \lambda^{-2}\psi\left( \lambda^{2}t,\lambda x\right),
\end{equation*}
if neglecting the lower order pressure term and electronic field term. Therefore, it is so interesting to find a functional space as large as possible such that the global existence and uniqueness holds. To the best of our knowledge, the idea is classical now. Motivated by those works on the incompressible Navier-Stokes equations \cite{CM,FK, KY}, Danchin \cite{DR1} considered compressible Navier-Stokes equations and showed the global-in-time existence of strong solutions in $L^{2}$ critical Besov spaces. Subsequently,  Charve \& Danchin \cite{CD1}, Chen, Miao \& Zhang \cite{CMZ} independently extended that result to more general $L^{p}$ critical spaces. In \cite{HB}, Haspot obtained essentially the same results as in \cite{CD1,CMZ} by using an elementary energy approach which is based on the use of Hoff's viscous effective flux in \cite{HD}. In the recent work \cite{DX}, Danchin and the second author established the optimal time decay estimates in more general $L^{p}$ framework.

However, there are few results to our knowledge on the global existence and large-time behavior of solutions to \eqref{Eq:1.1}-\eqref{Eq:1.2} in spatially critical spaces. Hao and Li \cite{HL} established the global existence of small strong solution to \eqref{Eq:1.1} in the $L^{2}$ critical hybrid Besov space and in any dimension $d\geq3$. Recently, Chikami and Danchin \cite{CD2} constructed the unique global strong solution near constant equilibrium in the general $L^{p}$ critical Besov spaces, which contains the case of dimension $d=2$. For simplicity, they assumed that viscosities $\mu$ and $\lambda$ were constant only. Indeed, their result remains true in case of the density-dependent smooth functions. For convenience, we state it as follows (the reader is also referred to \cite{CD2}).

\begin{thm}\label{Thm1.1} Let $d\geq2$ and $p$ fulfill
\begin{equation}\label{Eq:1.5}
2\leq p\leq \min\left(4,2d/(d-2)\right) \ \hbox{and}, \ \hbox{additionally}, \ p\neq 4 \ \hbox{if} \ d=2.
\end{equation}
Assume that $P'\left(\varrho_{\infty} \right)>0$ and that \eqref{Eq:1.2} is satisfied.
There exists a small positive constant $c=c\left(p,d,\mu,\lambda, P,\varrho_{\infty}\right)$ and a universal integer $j_{0}\in
\mathbb{N}$ on $\mu$ and $\lambda$ such that if $a_{0}\triangleq \left(\varrho_{0}-\varrho_{\infty}\right) \in \dot{B}_{p,1}^{\frac {d}{p}}$, if $u_{0}\in \dot{B}_{p,1}^{\frac {d}{p}-1}$ and if in addition $\left(a_{0}^{\ell},\nabla u_{0}^{\ell} \right)\in \dot{B}_{2,1}^{\frac {d}{2}-2}$ with
\begin{equation*}
\mathcal{E}_{p,0}\triangleq \left\|\left( a_{0},\nabla u_{0}\right) \right\|_{\dot{B}_{2,1}^{\frac {d}{2}-2}}^{\ell}
+\left\|\left(\nabla a_{0},u_{0}\right)\right\|_{\dot{B}_{p,1}^{\frac
{d}{p}-1}}^{h}\leq c,
\end{equation*}
then the Cauchy problem \eqref{Eq:1.1}-\eqref{Eq:1.2} admits a unique global-in-time solution $(\varrho,u)$ with $\varrho=\varrho_{\infty}+a$ and $(a,u)$ in the space $X_{p}$ defined by:
\begin{eqnarray*}
&&\hspace{-6mm} a^{\ell} \in \widetilde{\mathcal{C}_{b}}\left(\mathbb{R_{+}};\dot{B}_{2,1}^{\frac {d}{2}-2}\right)\cap L^{1}\left(\mathbb{R_{+}};\dot{B}_{2,1}^{\frac {d}{2}}\right), \ \ u^{\ell} \in \widetilde{\mathcal{C}_{b}}\left(\mathbb{R_{+}};\dot{B}_{2,1}^{\frac {d}{2}-1}\right)\cap L^{1}\left(\mathbb{R_{+}};\dot{B}_{2,1}^{\frac {d}{2}+1}\right), \\
&&\hspace{-6mm} a^{h}\in \widetilde{\mathcal{C}_{b}}\left(\mathbb{R_{+}};\dot{B}_{p,1}^{\frac{d}{p}}\right)\cap L^{1}\left(\mathbb{R_{+}};\dot{B}_{p,1}^{\frac {d}{p}}\right),\ \ \ u^{h}\in \widetilde{\mathcal{C}_{b}}\left(\mathbb{R_{+}};\dot{B}_{p,1}^{\frac {d}{p}-1}\right)\cap L^{1}\left(\mathbb{R_{+}};\dot{B}_{p,1}^{\frac {d}{p}+1}\right).
\end{eqnarray*}
Furthermore, we get for some constant $C=C\left(p,d,\mu,\lambda,P,\varrho_{\infty}\right)$,
\begin{equation*}
\mathcal{E}_{p}(t)\leq C\mathcal{E}_{p,0},
\end{equation*}
for any $t>0$, where
\begin{eqnarray}
\mathcal{E}_{p}(t)&\triangleq&\left\|a\right\|_{\widetilde{L}_{t}^{\infty} (\dot{B}_{2,1}^{\frac {d}{2}-2})}^{\ell}
+\left\|u\right\|_{\widetilde{L}_{t}^{\infty} (\dot{B}_{2,1}^{\frac {d}{2}-1})}^{\ell}
+\left\|\left(a,\nabla u\right)\right\|_{L_{t}^{1} (\dot{B}_{2,1}^{\frac {d}{2}})}^{\ell} \nonumber \\
\label{Eq:1.6}
&&+\left\|\left(\nabla a,u\right)\right\|_{\widetilde{L}_{t}^{\infty}(\dot{B}_{p,1}^{\frac {d}{p}-1})}^{h}
+\left\|\left(a,\nabla u\right)\right\|_{L_{t}^{1}(\dot{B}_{p,1}^{\frac {d}{p}})}^{h}.
\end{eqnarray}
\end{thm}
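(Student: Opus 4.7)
The plan is to follow the Chikami-Danchin strategy from \cite{CD2}, upgraded to accommodate density-dependent viscosities. First, I would rewrite the system for the perturbation $(a,u)=(\varrho-\varrho_\infty,u)$ and eliminate $\psi$ by $\nabla\psi=\nabla(-\Delta)^{-1}a$. The linearized system around $(\varrho_\infty,0)$, with viscosities frozen at $\varrho_\infty$, then splits into a transport-like equation on $a$ and a damped wave-type equation on the compressible part of $u$, with an additional zero-order term $\nabla(-\Delta)^{-1}a$ that only matters at low frequencies. The natural hybrid framework is therefore: low-frequency part in the $L^2$-based Besov scale (one derivative heavier than the Navier-Stokes case to absorb the $\nabla(-\Delta)^{-1}$ coupling, which explains the regularity $\dot B^{d/2-2}_{2,1}$ for $a^\ell$ and $\dot B^{d/2-1}_{2,1}$ for $u^\ell$); high-frequency part in the parabolic $L^p$-based Besov scale as in \cite{CD1,CMZ}.

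Second, I would carry out the linear analysis on each frequency block. At low frequencies, a Lyapunov functional analogous to the one in \cite{CD2} (mixing $\|a\|_{L^2}^2$, $\|u\|_{L^2}^2$ and a cross term $\langle u,\nabla a\rangle$ to recover the $a$-damping), modified by the Poisson contribution $\|\Lambda^{-1}a\|_{L^2}^2$, would yield the parabolic gain $\dot B^{d/2-2}_{2,1}\cap L^1_t\dot B^{d/2}_{2,1}$ for $a^\ell$ and $\dot B^{d/2-1}_{2,1}\cap L^1_t\dot B^{d/2+1}_{2,1}$ for $u^\ell$. At high frequencies, the Poisson term is negligible and one uses Haspot's effective velocity $w=u-(\mu+\lambda)\nabla(-\Delta)^{-1}\nabla a/\ldots$ (as in \cite{HB}) to decouple the system into a damped ODE on $a$ and a parabolic equation on $w$, producing the $\dot B^{d/p}_{p,1}$ and $\dot B^{d/p-1}_{p,1}$ estimates for $(a,u)^h$.

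Third, I would treat the nonlinear source terms. The convective terms $u\cdot\nabla a$, $u\cdot\nabla u$ and $\nabla P(\varrho)/\varrho$ are controlled by the standard paradifferential product and composition estimates in $\dot B^{s}_{p,r}$; the pressure linearisation remainder is handled by the Bony decomposition together with Bernstein inequalities, splitting each factor into low and high parts so that the norms appearing on the right-hand side all belong to the hybrid space built in \eqref{Eq:1.6}. The density-dependence of $\mu,\lambda$ contributes an extra quasilinear term of the form $\mathrm{div}(K(a)D(u))$ with $K(0)=0$, which by a Meyer-type composition lemma in $\dot B^{d/p}_{p,1}$ inherits the regularity of $a$ and is again bounded by a product of the relevant pieces of $\mathcal E_p(t)$. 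Since $a$ remains small in $L^\infty$ by the embedding $\dot B^{d/p}_{p,1}\hookrightarrow L^\infty$, no positivity issue arises for $\varrho$.

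Finally, I would close the argument via a Friedrichs-type approximation: construct smooth global solutions $(a^n,u^n)$ to spectrally truncated systems, establish the uniform bound $\mathcal E_p^n(t)\le C\mathcal E_{p,0}$ on $[0,T^n_\ast)$ by a standard bootstrap (choosing $c$ small enough so that the nonlinear estimates are absorbed by the linear dissipation), deduce $T^n_\ast=+\infty$, and pass to the limit using compactness in the hybrid space together with the uniqueness result proved as in \cite{CD2,DR1}. The main obstacle, and the only real departure from \cite{CD2}, is the careful bookkeeping of the extra nonlinear stress $\mathrm{div}(K(a)D(u))$ across low and high frequencies with two different Lebesgue indices, which requires non-classical product estimates of the type developed later in the paper rather than a direct application of the constant-viscosity estimates.
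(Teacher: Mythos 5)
The paper does not actually prove Theorem~\ref{Thm1.1}: it is imported verbatim from Chikami--Danchin \cite{CD2} (see the sentence preceding the statement, and the opening of Section~3 where the result is ``taken for granted''), so there is no in-paper proof to compare against. Your outline --- the hybrid low/high-frequency Besov framework with the extra derivative shift on $a^{\ell}$ to absorb $\nabla(-\Delta)^{-1}a$, a Lyapunov functional with cross terms at low frequencies, Haspot's effective velocity at high frequencies, paraproduct and composition estimates for the nonlinearities including the additional stress $\mathrm{div}\left(K(a)D(u)\right)$ coming from density-dependent viscosities, and a Friedrichs approximation plus bootstrap to close --- is precisely the strategy of \cite{CD2} (building on \cite{CD1,HB}), i.e.\ the very argument the paper defers to, and it is correct at the level of a sketch (your displayed formula for the effective velocity is garbled, but the intended object is the standard one).
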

The next step is to exhibit the large time asymptotic description of the constructed solution. Inspired by the dissipative analysis in \cite{CD2,LMZ},
we intend to establish the optimal time-decay estimates to \eqref{Eq:1.1}-\eqref{Eq:1.2}. It is convenient to rewrite \eqref{Eq:1.1} as the nonlinear perturbation form of $(\varrho_{\infty},0)$, looking at the nonlinearities as source terms. To
simplify the statement of our main result, we assume that $\varrho_{\infty}=1$ and $P'(1)=1$. Consequently,
in terms of the new variables $(a,u)$, System \eqref{Eq:1.1} becomes
\begin{equation} \label{Eq:1.7}
\left\{
\begin{array}{l}
\partial _{t}a+\div u=f, \\
\partial _{t}u-\mathcal{A}u+\nabla a+\nabla(-\Delta)^{-1} a=g,
\end{array}
\right.
\end{equation}
where
\begin{eqnarray*}
&&f \triangleq -\div \left(au\right), \\
&&g \triangleq -u\cdot \nabla u-I(a)\mathcal{A}u-k(a)\nabla a+\frac {1}{1+a}\mathrm{div}\left(2\widetilde{\mu }(a)\,D(u)+\widetilde{\lambda }(a) \div u \,\Id \right)
\end{eqnarray*}
with
\begin{center}
$\mathcal{A}\triangleq\mu _{\infty} \Delta +\left(\lambda _{\infty} +\mu _{\infty}\right)
\nabla \div$ such that $2\mu_{\infty}+\lambda _{\infty}=1$ and
$\mu_{\infty} >0$ \\
($\mu _{\infty}\triangleq\mu (1)$ and $\lambda _{\infty} \triangleq\lambda (1)$), \ \ $I(a) \triangleq \frac {a}{1+a}$, \ \
$k(a) \triangleq \frac{P^{\prime}(1+a)}{1+a}-1$, \\
$\widetilde{\mu }(a)\triangleq \mu (1+a)-\mu (1)$, \ \ $\widetilde{\lambda}(a)\triangleq \lambda(1+a)-\lambda (1)$.
\end{center}

The main result is stated as follows.
\begin{thm}\label{Thm1.2}
Let $d\geq2$ and $p$ satisfy \eqref{Eq:1.5}.  Denote by $(\varrho,u)$ the global solution constructed in Theorem \ref{Thm1.1}.
Let the real number $s_{1}$ fulfill
\begin{equation} \label{Eq:1.8}
1-\frac{d}{2}<s_{1}\leq s_{0} \ \ \hbox{with} \ \ s_{0}=\frac{2d}{p}-\frac{d}{2}.
\end{equation}
There exists a positive constant $c=c\left( p,d,\mu ,\lambda,P\right)$ such that if
\begin{equation} \label{Eq:1.9}
\mathcal{D}_{p,0}\triangleq \|a_{0}\|^{\ell}_{\dot{B}_{2,\infty}^{-s_{1}-1}}+\|u_{0}\|^{\ell}_{\dot{B}_{2,\infty}^{-s_{1}}}\leq c,
\end{equation}
then we have for all $t\geq 0$,
\begin{equation}\label{Eq:1.10}
\mathcal{D}_{p}(t)\lesssim \left(\mathcal{D}_{p,0}+\left\|\left(\nabla a_{0},u_{0}\right)\right\|^{h}_{\dot{B}_{p,1}^{\frac {d}{p}-1}}\right),
\end{equation}
where the functional $\mathcal{D}_{p}(t)$ is defined by
\begin{eqnarray}\label{Eq:1.11}
\mathcal{D}_{p}(t)&\triangleq& \sup_{s\in[\varepsilon-s_{1},\frac{d}{2}+1]}
\left\|\langle\tau\rangle^{\frac{s_{1}+s}{2}}(\tilde{a},u)\right\|^{\ell}_{L^{\infty}_{t}(\dot{B}_{2,1}^{s})}
+\left\|\langle\tau\rangle^{\alpha}\left(\nabla a,u\right)\right\|^{h}_{\tilde{L}^{\infty}_{t}(\dot{B}_{p,1}^{\frac {d}{p}-1})}\nonumber\\
&&+\left\|\tau^{\alpha} \nabla u \right\|^{h}_{\tilde{L}^{\infty}_{t}(\dot{B}_{p,1}^{\frac {d}{p}})}
\end{eqnarray}
with $\tilde{a}=\Lambda^{-1}a$ and $\alpha \triangleq s_{1}+\frac{d}{2}+\frac{1}{2}-\varepsilon$ for some sufficiently small $\varepsilon>0$.
\end{thm}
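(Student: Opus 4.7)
The plan is to adapt the low/high-frequency strategy developed in \cite{DX} for the pure Navier-Stokes system, the essential novelty being the use of the effective unknown $\tilde a=\Lambda^{-1}a$ at low frequency. Under this change of variable, the singular term $\nabla(-\Delta)^{-1}a$ becomes the zero-order $\Lambda^{-1}\nabla\tilde a$, whereas $\nabla a=\nabla\Lambda\tilde a$ is only relevant at high frequency; hence the low-frequency linear part of \eqref{Eq:1.7} becomes symmetric in $(\tilde a,u)$ and the associated semigroup decays like the heat kernel $e^{-c|\xi|^{2}t}$. This is the analytic explanation of the half-derivative gain for $a$ over $u$ that is encoded in the weight $\langle\tau\rangle^{(s_{1}+s)/2}$ of \eqref{Eq:1.11}.

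The first step is to propagate a negative Besov regularity of $(\tilde a,u)$ at low frequency, namely to prove
\begin{equation*}
\sup_{t\geq0}\bigl\|(\tilde a,u)(t)\bigr\|^{\ell}_{\dot B^{-s_{1}}_{2,\infty}}\lesssim \mathcal{D}_{p,0}+\bigl\|(\nabla a_{0},u_{0})\bigr\|^{h}_{\dot B^{\frac{d}{p}-1}_{p,1}}.
\end{equation*}
This will follow from an $L^{2}$ energy argument performed on each dyadic block of the symmetrized linear system, with supremum taken over $j\leq j_{0}$; the right-hand sides $\Lambda^{-1}f=-\Lambda^{-1}\div(au)$ and $g$ are treated by paraproduct and remainder laws in $\dot B^{-s_{1}}_{2,\infty}$-type spaces. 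The constraint $1-d/2<s_{1}\leq s_{0}=2d/p-d/2$ then appears naturally: the lower bound is needed for Bony's decomposition to be meaningful in $\dot B^{-s_{1}}_{2,\infty}$, while the upper bound allows the Sobolev embedding $\dot B^{\frac{d}{p}-1}_{p,1}\hookrightarrow\dot B^{s_{0}-1}_{2,1}$ to feed the smallness already provided by $\mathcal{E}_{p}$ into the nonlinear estimates.

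Once the negative norm is under control uniformly in $t$, I will convert it into a time-decay estimate through the classical interpolation
\begin{equation*}
\bigl\|(\tilde a,u)\bigr\|^{\ell}_{\dot B^{s}_{2,1}}\lesssim \bigl(\|(\tilde a,u)\|^{\ell}_{\dot B^{-s_{1}}_{2,\infty}}\bigr)^{\theta}\bigl(\|(\tilde a,u)\|^{\ell}_{\dot B^{s+2}_{2,1}}\bigr)^{1-\theta},\qquad \theta=\frac{2}{s+s_{1}+2},
\end{equation*}
plugged into a dissipative inequality of the form $\frac{d}{dt}\|(\tilde a,u)\|^{\ell}_{\dot B^{s}_{2,1}}+c\|(\tilde a,u)\|^{\ell}_{\dot B^{s+2}_{2,1}}\lesssim (\text{nonlin})$ coming from the heat-like low-frequency block. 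A standard ODE comparison then yields the weight $\langle\tau\rangle^{(s_{1}+s)/2}$ for every admissible $s$. For the high-frequency block I plan to start from the Lyapunov functional built in \cite{CD2} for $(\nabla a,u)^{h}$ in $\dot B^{\frac{d}{p}-1}_{p,1}$, multiply it by $\tau^{\alpha}$, and absorb the resulting factor $\alpha\tau^{\alpha-1}$ into the parabolic damping on $u$, which produces precisely the $\tau^{\alpha}\nabla u$ term appearing in \eqref{Eq:1.11}; the residual source terms will reduce, by product estimates and the low-frequency decay just obtained, to integrable time-weighted quantities that can be closed by Gronwall.

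The hardest step is expected to be the non-classical product laws in the large Besov space $\dot B^{-s_{1}-1}_{2,\infty}$, particularly for the nonlinearities $I(a)\mathcal{A}u$, $k(a)\nabla a$ and the viscosity remainder, whose low-frequency projection must be bounded by the propagated small norm times the critical norm $\mathcal{E}_{p}$. These bounds mix $L^{2}$ and $L^{p}$ critical regularities and rely on carefully chosen Sobolev embeddings; it is here that the two-sided restriction on $p$ in \eqref{Eq:1.5} and on $s_{1}$ in \eqref{Eq:1.8} becomes genuinely necessary. Once these product and commutator estimates are in place, the weighted Lyapunov arguments at low and high frequency combine to close the bound on $\mathcal{D}_{p}(t)$.
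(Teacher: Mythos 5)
Your overall architecture (low/high frequency splitting, the unknown $\tilde a=\Lambda^{-1}a$ at low frequency, a weighted Lyapunov argument at high frequency) is recognisably the right one, and your low-frequency mechanism is genuinely different from the paper's: you propose to propagate the negative norm $\|(\tilde a,u)\|^{\ell}_{\dot B^{-s_1}_{2,\infty}}$ and then combine interpolation with a dissipative differential inequality (a Guo--Wang type ODE comparison), whereas the paper writes the Duhamel formula for the semigroup $\tilde E(t)$, uses the pointwise bound $|\mathcal{F}(\tilde E(t)\tilde U)|\le Ce^{-c_0|\xi|^2t}|\mathcal{F}\tilde U|$ to get $\langle t\rangle^{-(s_1+s)/2}$ directly, and then estimates the time convolution $\int_0^t\langle t-\tau\rangle^{-(s_1+s)/2}\|(\Lambda^{-1}f,g)\|^{\ell}_{\dot B^{-s_1}_{2,\infty}}d\tau$ term by term. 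Both mechanisms are viable, but your plan has two genuine gaps. First, the sequencing ``propagate the negative norm uniformly, \emph{then} deduce decay'' cannot be carried out in this critical framework: to close the uniform bound on $\|(\tilde a,u)\|^{\ell}_{\dot B^{-s_1}_{2,\infty}}$ you must show that $\|(\Lambda^{-1}f,g)(\tau)\|^{\ell}_{\dot B^{-s_1}_{2,\infty}}$ is integrable in time, and in the paper this integrability is obtained precisely from the decay rates encoded in $\mathcal{D}_p(\tau)$ (e.g. rates like $\langle\tau\rangle^{-s_1/2-d/4-1/2}$ with exponent $>1$). The energy functional $\mathcal{E}_p$ alone does not suffice, because for $p>2$ the high-frequency parts of the nonlinearities cannot be pushed into $\dot B^{-s_1}_{2,\infty}$ by Bernstein (the inequality goes the wrong way from $L^p$ to $L^2$); this is exactly where the non-classical product laws of Proposition \ref{Prop2.3}, the $L^{p^*}$ norms, and the upper restriction $s_1\le s_0=2d/p-d/2$ enter. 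You correctly name this as the hardest step, but the estimate must be organised as a single bootstrap on $\mathcal{D}_p(t)$ (yielding $\mathcal{D}_p\lesssim\mathcal{D}_{p,0}+\|(\nabla a_0,u_0)\|^h_{\dot B^{d/p-1}_{p,1}}+\mathcal{D}_p^2+\mathcal{E}_p^2$), not as two consecutive steps.

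Second, at high frequency your plan only accounts for the second component of $\mathcal{D}_p(t)$. The third component, $\|\tau^{\alpha}\nabla u\|^{h}_{\tilde L^{\infty}_t(\dot B^{d/p}_{p,1})}$, is a gain of one full derivative over the critical regularity $\dot B^{d/p-1}_{p,1}$ of $u$, and it is \emph{not} produced by ``absorbing $\alpha\tau^{\alpha-1}$ into the parabolic damping'': that absorption only controls the error coming from differentiating the weight. The extra derivative requires a separate maximal-regularity argument for the Lam\'e semigroup applied to $t^{\alpha}\mathcal{A}u$ with vanishing initial datum (the paper's Step 3, via Proposition \ref{Prop2.6} and Remark \ref{Rem2.2}), and this third norm is itself indispensable to close the low-frequency estimates (it is what makes terms like $u\cdot\nabla u^{h}$ and $g_3(a,u^{h})$ decay fast enough). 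Relatedly, the transport term $u\cdot\nabla a$ at high frequency loses one derivative on $a$; you implicitly delegate this to the Lyapunov functional of \cite{CD2}, which is acceptable provided you make explicit the effective velocity $w=\nabla(-\Delta)^{-1}(a-\div u)$ and the time-weighted commutator estimate (Proposition \ref{Prop2.5} with the weight $t^{\alpha}$ inserted). Until the product/commutator estimates for the full range $1-d/2<s_1\le s_0$ and both regimes $2\le p\le d$ and $p>d$ are actually written out, the proof is a plausible programme rather than a proof.
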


\begin{rem}\label{Rem1.1}
Condition \eqref{Eq:1.5} allows to the case $p>d$ for which the
regularity exponent $d/p-1$ of velocity field $u$ becomes negative in physical dimensions $d=2,3$, Theorem \ref{Thm1.2} thus holds for small data but
large highly oscillating initial velocity. Although similar results were available in \cite{CD2,BWY}, there are apparent innovative ingredients in Theorem \ref{Thm1.2}. More precisely, the work \cite{CD2} handle the case of $s_1=s_0$ and $p=2$ for simplicity, and \cite{BWY} only consider the non oscillation case ($2\leq p \leq d$) with the same endpoint regularity  $s_1=s_0$. In addition, the corresponding result of \cite{BWY} cannot cover the 2D case. In this paper,
we try to establish the time-weighted inequality in more general $L^p$ framework including cases $2\leq p \leq d$ and $p>d$. More important, the regularity assumption of low frequencies (\eqref{Eq:1.8}-\eqref{Eq:1.9}) is new, which enables us to enjoy larger freedom on the choice of $s_1$.

On the other hand, we emphasize that the decay exponents $\alpha=s_{1}+\frac{d}{2}+\frac{1}{2}-\varepsilon$ in the second and third terms of functional $\mathcal{D}_{p}(t)$ are optimal. Actually, it cannot be more than $s_{1}+\frac{d}{2}+\frac{1}{2}$. See for example,
$$
\|u^{\ell}\cdot \nabla u^{\ell}\|^{h}_{\dot{B}^{\frac{d}{p}-1}_{p,1}}\lesssim \|u^{\ell}\cdot \nabla u^{\ell}\|^{h}_{\dot{B}^{\frac{d}{p}}_{p,1}}
\lesssim \|u^{\ell}\|_{\dot{B}^{\frac{d}{p}}_{p,1}}\|\nabla u^{\ell}\|_{\dot{B}^{\frac{d}{p}}_{p,1}}.
$$
In particular, when $s_1=s_0$, the value of $\alpha$ becomes $\frac{2d}{p}+\frac 12-\varepsilon$ in \eqref{Eq:1.11}. Clearly, such optimal exponent
has not yet been observed in \cite{CD2,BWY}.
\end{rem}

\begin{rem}\label{Rem1.2}
Due to the dissipative effect arising from the Poisson potential, it follows from \eqref{Eq:1.10} that
the decay of density is \textit{faster} at the half rate than that of velocity. This is a totally different ingredient
in comparison with compressible Navier-Stokes equations (see for example \cite{DX}). As a matter of fact, we improve the analysis of
\cite{DX} such that the optimality of the regularity and decay exponents are available in the definition of $\mathcal{D}_{p}(t)$.
\end{rem}

As a consequence of Theorem \ref{Thm1.2}, we have the decay of the $L^p$ norm (the slightly stronger $\dot{B}^{0}_{p,1}$ norm in fact) of solutions.
\begin{cor}\label{Cor1.1}
The solution $(\varrho,u)$ constructed in Theorem \ref{Thm1.2} fulfills
$$\left\|\Lambda^{s}\left(\varrho-1\right)\right\|_{L^{p}}\lesssim  (\mathcal{D}_{p,0}+\left\|\left(\nabla a_{0},u_{0}\right)\right\|^{h}_{\dot{B}^{\frac{d}{p}-1}_{p,1}})\langle t\rangle^{-\frac{s_{1}+s+1}{2}} \ \hbox{if} \ -s_{1}-1<s\leq\frac{d}{p},$$
$$\left\|\Lambda^{s}u\right\|_{L^{p}}\lesssim (\mathcal{D}_{p,0}+\left\|\left(\nabla a_{0},u_{0}\right)\right\|^{h}_{\dot{B}^{\frac{d}{p}-1}_{p,1}})\langle t\rangle^{-\frac{s_{1}+s}{2}} \ \hbox{if} \ -s_{1}<s\leq\frac{d}{p}+1,$$
where the pseudo-differential operator $\Lambda^{\ell}$ is defined by $\Lambda^{\ell}f\triangleq\mathcal{F}^{-1}\left(|\xi|^{\ell}\mathcal{F}f\right)$.
\end{cor}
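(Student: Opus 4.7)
The plan is to reduce the $L^p$ bound to the stronger homogeneous Besov norm via the standard embedding $\|\Lambda^s f\|_{L^p}\lesssim\|f\|_{\dot B^s_{p,1}}$ and then split $f=f^\ell+f^h$ so that the low- and high-frequency contributions can be bounded separately by different terms of the functional $\mathcal{D}_p(t)$ of Theorem \ref{Thm1.2}. Once the decomposition is in place, everything reduces to a bookkeeping check that the weight exponents and regularity indices appearing in $\mathcal{D}_p(t)$ dominate the rates $\langle t\rangle^{-(s_1+s+1)/2}$ and $\langle t\rangle^{-(s_1+s)/2}$ claimed by the corollary.

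For the low-frequency pieces I would exploit the Bernstein-type embedding $\|f^\ell\|_{\dot B^s_{p,1}}\lesssim\|f^\ell\|_{\dot B^{s+d/2-d/p}_{2,1}}$, which is legitimate since $p\geq 2$. For the velocity this immediately converts a bound at regularity $s\in(-s_1,\tfrac{d}{p}+1]$ into the low-frequency $L^2$-based bound at $\sigma=s+\tfrac{d}{2}-\tfrac{d}{p}\in(\varepsilon-s_1,\tfrac{d}{2}+1]$, which the first term of $\mathcal{D}_p(t)$ controls with the weight $\langle\tau\rangle^{-(s_1+\sigma)/2}$; since $\sigma\geq s$, this decay is at least as fast as the claimed $\langle t\rangle^{-(s_1+s)/2}$. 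For the density I would pass through $\tilde a=\Lambda^{-1}a$, so that $\|a^\ell\|_{\dot B^{s+d/2-d/p}_{2,1}}\simeq\|\tilde a^\ell\|_{\dot B^{s+d/2-d/p+1}_{2,1}}$; the condition $s\leq d/p$ then precisely places the auxiliary regularity index at the upper endpoint $d/2+1$ of the range in which $\mathcal{D}_p(t)$ controls $\tilde a$, and the resulting time weight $\langle\tau\rangle^{-(s_1+s+d/2-d/p+1)/2}$ is stronger than the claimed $\langle\tau\rangle^{-(s_1+s+1)/2}$ since $d/2\geq d/p$. The lower endpoint $s>-s_1-1$ (respectively $s>-s_1$) matches the lower endpoint $\varepsilon-s_1$ of the $\mathcal{D}_p(t)$-range after one takes $\varepsilon$ arbitrarily small.

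For the high-frequency pieces I would use the standard monotonicity $\|f^h\|_{\dot B^s_{p,1}}\lesssim\|f^h\|_{\dot B^{s'}_{p,1}}$ whenever $s\leq s'$. For the density with $s\leq d/p$, this reduces the task to controlling $\|a^h\|_{\dot B^{d/p}_{p,1}}\simeq\|\nabla a^h\|_{\dot B^{d/p-1}_{p,1}}$, which the second term of $\mathcal{D}_p(t)$ bounds by $\langle\tau\rangle^{-\alpha}$ with $\alpha=s_1+\tfrac{d}{2}+\tfrac{1}{2}-\varepsilon$; a direct comparison using $s_1>1-\tfrac{d}{2}$ and $p\geq 2$ shows $\alpha\geq\tfrac{s_1+s+1}{2}$ on this range. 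For the velocity the argument splits: when $s\leq d/p-1$ one uses the same second term of $\mathcal{D}_p(t)$, while when $d/p-1<s\leq d/p+1$ one instead invokes the third term $\|\tau^\alpha\nabla u\|^h_{\tilde L^\infty_t(\dot B^{d/p}_{p,1})}$ to bound $\|u^h\|_{\dot B^{d/p+1}_{p,1}}\lesssim\mathcal{D}_p(t)\,\tau^{-\alpha}$, which is consistent with $\langle\tau\rangle^{-(s_1+s)/2}$ for $\tau$ bounded away from $0$. The main obstacle, and the one point that really requires care, is this last step: verifying, by writing out $\alpha=s_1+\tfrac{d}{2}+\tfrac{1}{2}-\varepsilon$ against $(s_1+s)/2$ for $s\leq\tfrac{d}{p}+1\leq\tfrac{d}{2}+1$, that the third-term weight genuinely dominates the claimed rate; for small times one simply absorbs the transient behavior into the constant using the uniform bound from Theorem \ref{Thm1.1}.
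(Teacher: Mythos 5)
Your proposal is correct and follows essentially the same route as the paper: reduce $\|\Lambda^s\cdot\|_{L^p}$ to the $\dot B^s_{p,1}$ norm, split into low and high frequencies, control the low-frequency part by the first term of $\mathcal{D}_p(t)$ (with the shift through $\tilde a=\Lambda^{-1}a$ accounting for the extra $\langle t\rangle^{-1/2}$ for the density) and the high-frequency part by the $\alpha$-weighted terms, then verify that $\alpha\geq\frac{s_1+s}{2}$ (resp.\ $\frac{s_1+s+1}{2}$) on the stated ranges. The only differences are cosmetic — you use the general embedding $\dot B^{s+d/2-d/p}_{2,1}\hookrightarrow\dot B^{s}_{p,1}$ where the paper uses the low-frequency embedding at fixed regularity index, and your handling of small times via Theorem \ref{Thm1.1} matches the paper's own ``one can assume $t\geq1$'' convention.
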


Moreover, one has more decay estimates of $L^{q}$-$L^{r}$ type.
\begin{cor}\label{Cor1.2}
Let the assumptions of Theorem \ref{Thm1.2} be satisfied with $p=2$. Then the corresponding solution $\varrho$ fulfills
$$\|\Lambda^{l}\left(\varrho-1\right)\|_{L^{r}}\lesssim (\mathcal{D}_{2,0}+\left\|\left(\nabla a_{0},u_{0}\right)\right\|^{h}_{\dot{B}^{\frac{d}{2}-1}_{2,1}})
\langle t\rangle^{-\frac{s_{1}}{2}-\frac{d}{2}(\frac{1}{2}-\frac{1}{r})-\frac{l+1}{2}}$$
for  $2\leq r\leq\infty$ and $l\in\mathbb{R}$ satisfying $-s_{1}-1<l+d\left(\frac{1}{2}-\frac{1}{r}\right)\leq\frac{d}{2}$, and $u$ fulfills
$$\|\Lambda^{k}u\|_{L^{r}}\lesssim (\mathcal{D}_{2,0}+\left\|\left(\nabla a_{0},u_{0}\right)\right\|^{h}_{\dot{B}^{\frac{d}{2}-1}_{2,1}})
\langle t\rangle^{-\frac{s_{1}}{2}-\frac{d}{2}(\frac{1}{2}-\frac{1}{r})-\frac{k}{2}}$$
for  $2\leq r\leq\infty$ and $k\in\mathbb{R}$ satisfying $-s_{1}<k+d\left(\frac{1}{2}-\frac{1}{r}\right)\leq\frac{d}{2}+1$.
\end{cor}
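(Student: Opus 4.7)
The plan is to derive both bounds from Theorem \ref{Thm1.2} with $p=2$ by reducing $L^r$ norms to Besov norms via the standard embedding $\dot B^{d(1/2-1/r)}_{2,1}\hookrightarrow L^r$ (for $2\le r\le\infty$), and then splitting into low/high frequencies to apply the weighted estimates contained in $\mathcal{D}_2(t)$. First I would write
\begin{equation*}
\|\Lambda^{l}(\varrho-1)\|_{L^{r}}\lesssim \|\Lambda^{l}a^{\ell}\|_{\dot B^{d(1/2-1/r)}_{2,1}}+\|\Lambda^{l}a^{h}\|_{\dot B^{d(1/2-1/r)}_{2,1}},
\end{equation*}
and similarly for $\Lambda^k u$, so that the task is reduced to controlling, in Besov norms, the low- and high-frequency parts of $a$ and $u$ at the shifted regularity $s=l+d(1/2-1/r)$ or $s=k+d(1/2-1/r)$.

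For the low-frequency block I would rewrite $a=\Lambda\tilde a$, so that $\|a^{\ell}\|_{\dot B^{s}_{2,1}}=\|\tilde a^{\ell}\|_{\dot B^{s+1}_{2,1}}$, and then invoke the first term of $\mathcal{D}_2(t)$ with regularity index $s+1$, which under the hypothesis $-s_{1}-1<l+d(1/2-1/r)\leq d/2$ satisfies $\varepsilon-s_{1}\leq s+1\leq d/2+1$ (after an arbitrarily small shrinking of the strict inequality, absorbed into the $\varepsilon$). This yields precisely the advertised rate $\langle t\rangle^{-\frac{s_{1}+s+1}{2}}=\langle t\rangle^{-\frac{s_{1}}{2}-\frac{d}{2}(\frac{1}{2}-\frac{1}{r})-\frac{l+1}{2}}$. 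The same argument applied directly to $u$ (no $\Lambda^{-1}$ shift is needed) covers $\|\Lambda^{k}u^{\ell}\|_{L^r}$ under $-s_{1}<k+d(1/2-1/r)\le d/2+1$.

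For the high-frequency block I would exploit the fact that on the high-frequency range any Besov norm is dominated by one of higher regularity. Concretely,
\begin{equation*}
\|\Lambda^{l}a^{h}\|_{\dot B^{d(1/2-1/r)}_{2,1}}\lesssim \|a^{h}\|_{\dot B^{d/2}_{2,1}}\lesssim \|\nabla a^{h}\|_{\dot B^{d/2-1}_{2,1}},
\end{equation*}
because $l+d(1/2-1/r)\le d/2$; then the second term of $\mathcal{D}_2(t)$ supplies the decay $\langle t\rangle^{-\alpha}$ with $\alpha=s_{1}+\frac{d}{2}+\frac{1}{2}-\varepsilon$, which is larger than the required rate and hence harmless. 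For $\Lambda^{k}u^{h}$ I would distinguish $k+d(1/2-1/r)\le d/2-1$, handled by the high-frequency bound on $u$ itself (decay $\langle t\rangle^{-\alpha}$), from the regime $d/2-1<k+d(1/2-1/r)\le d/2+1$, where I would use the third term of $\mathcal{D}_2(t)$ giving $\|\nabla u^h\|_{\dot B^{d/2}_{2,1}}\lesssim t^{-\alpha}$; for small $t$ the unweighted bound from $\mathcal{E}_2(t)$ of Theorem \ref{Thm1.1} suffices, so that the combined bound is indeed $\langle t\rangle^{-\frac{s_{1}}{2}-\frac{d}{2}(\frac{1}{2}-\frac{1}{r})-\frac{k}{2}}$.

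The only non-routine point is the bookkeeping at the endpoints of the admissible range of $s$: one must verify that the gain of $\varepsilon$ in the definition of $\mathcal{D}_p(t)$ accommodates the borderline cases of the corollary, and that the high-frequency excess decay ($\alpha$ is strictly larger than any of the needed exponents, as follows from $s_1>1-d/2$) really does absorb both regimes. Apart from this, the argument is purely a matter of combining Bernstein's inequality with the estimates already encoded in Theorem \ref{Thm1.2}.
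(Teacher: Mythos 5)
Your argument is correct, but it takes a genuinely different route from the paper's. The paper deduces Corollary \ref{Cor1.2} from Corollary \ref{Cor1.1} (with $p=2$) by a Gagliardo--Nirenberg interpolation inequality cited from \cite{SS}, namely $\|\Lambda^{\beta}f\|_{L^{r}}\lesssim\|\Lambda^{\beta_{1}}f\|_{L^{2}}^{1-\theta}\|\Lambda^{\beta_{2}}f\|_{L^{2}}^{\theta}$ with $\beta+d(\tfrac12-\tfrac1r)=\beta_{1}(1-\theta)+\beta_{2}\theta$ and $\beta_{1},\beta_{2}$ chosen in the admissible window of Corollary \ref{Cor1.1}; the decay exponent then follows by arithmetic. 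You bypass Corollary \ref{Cor1.1} and instead re-run its proof with target space $L^{r}$: the embedding $\dot{B}^{d(1/2-1/r)}_{2,1}\hookrightarrow L^{r}$, the low/high frequency splitting, the substitution $a=\Lambda\tilde a$ at low frequencies so as to invoke the first term of $\mathcal{D}_{2}(t)$ at regularity $s+1$, and the observation that $\alpha=s_{1}+\tfrac d2+\tfrac12-\varepsilon$ strictly dominates every exponent needed at high frequencies because $s_{1}>1-\tfrac d2$. The ranges of $l$ and $k$ come out the same either way. Your version is more self-contained (no external interpolation inequality, and $r=\infty$ is covered by $\dot{B}^{0}_{\infty,1}\hookrightarrow L^{\infty}$ rather than by the endpoint of Gagliardo--Nirenberg), whereas the paper's is shorter once Corollary \ref{Cor1.1} is in hand. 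One shared caveat: in the regime $k+d(\tfrac12-\tfrac1r)>\tfrac d2-1$ the high-frequency control of $u$ comes from $\|\tau^{\alpha}\nabla u\|^{h}_{\tilde L^{\infty}_{t}(\dot{B}^{d/2}_{2,1})}$, which degenerates as $t\to0$, and $\mathcal{E}_{2}(t)$ does \emph{not} bound $\sup_{t\le1}\|u^{h}(t)\|_{\dot{B}^{s}_{2,1}}$ at that regularity; so your remark that the unweighted bound from $\mathcal{E}_{2}$ handles small times is not accurate there, and the estimate should be read as a large-time statement. This, however, is exactly the informality already present in the paper's own proof of Corollary \ref{Cor1.1} (``one can assume $t\geq1$ in the large time asymptotics''), so it does not set your argument apart from theirs.
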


\begin{rem}\label{Rem1.3}
In contrast with previous efforts \cite{LMZ,LZ,WYJ,WW1,WW2} and references therein, those optimal decay estimates of the solution and its derivative are established in Besov spaces with the minimal regularity. The derivative index allows to take value in some interval rather than nonnegative integers only, and thus our results are optimal. In particular, compared to \cite{DX}, the choice of derivative indices of velocity is somewhat relaxed thanks to the optimal exponents for high frequencies.
\end{rem}

The proof of Theorem \ref{Thm1.2} is separated into three steps according to three terms in the functional $\mathcal{D}_{p}(t)$.
Although our proof follows from a similar fashion of the joint work \cite{DX},  we are interested in getting a \textit{sharp} time-weighted decay inequality in more general assumption of low frequencies.

Due to the coupled Poisson potential, there is a nonlocal term $\nabla(-\Delta)^{-1}a$ (that is equivalent to $\Lambda^{-1}a$) available in the velocity equation, which forces us to study the corresponding asymptotic behavior. As in \cite{CD1}, it is convenient to consider \eqref{Eq:3.4} in terms of $(\tilde{a},u)$ in the low-frequency regime, see Section 3. By the spectral analysis, one can conclude that the Green function for the linearized form of \eqref{Eq:3.4} behaves like a heat equation at low frequency. Consequently, it is possible to adapt the standard Duhamel principle handling those nonlinear terms in the right hand side of \eqref{Eq:3.4}. Owing to \eqref{Eq:1.9} in which $s_{1}$ belongs to the whole range $(1-\frac{d}{2},s_{0}]$, the low-frequency analysis (in the first step) is more complicated than \cite{DX}. More precisely, with the aid of low and high frequency decomposition, we separate the nonlinear term $\left(\Lambda^{-1}f,g\right)$ into $\left(\Lambda^{-1}f^{\ell},g^{\ell}\right)$ and $\left(\Lambda^{-1}f^{h},g^{h}\right)$ (see the context below).
To bound $\left(\Lambda^{-1}f^{\ell},g^{\ell}\right)$, we develop non classical product estimates in Besov spaces, see \eqref{Eq:3.9}-\eqref{Eq:3.10}.
On the other hand, bounding $\left(\Lambda^{-1}f^{h},g^{h}\right)$, we proceed differently the decay analysis depending on whether $2\leq p\leq d$ (non oscillation) or $p>d$ (oscillation). The former case lies in a new Besov product estimates (see \eqref{Eq:3.18}), while the later case (that is relevant in physical dimension $d=2,3$) depends on non-classical product estimates in Proposition \ref{Prop2.3}.

In the high-frequency regime, the nonlocal term $\nabla(-\Delta)^{-1}a$  is no longer effective and the Green function behaves the same as that of compressible Navier-Stoke equations.
In contrast with \cite{DX},  the main objective of the second and third steps is to track the optimal decay exponents with the general assumption \eqref{Eq:1.9}. For that end, we introduce the \textit{effective velocity} (which was initiated by Hoff \cite{HD} and well developed by Haspot \cite{HB})
\begin{equation}\label{Eq:1.12}
w\triangleq \nabla \left(-\Delta \right)^{-1}\left( a-\mathrm{div}\,u\right)
\end{equation}
in the second step and eliminate the technical difficulty that there is a loss of one derivative of density in the convention term. The last step is devoted to
establish gain of regularity and decay altogether for the high frequency of velocity. The analysis strongly relies on the parabolic maximal regularity for the Lam\'{e} semi-group (that are the same as for the heat semi-group, see Remark \ref{Rem2.2}). In order to highlight the new observation on the decay exponents, we present the proof in details, which may be of interest for the further works.

The rest of the paper unfolds as follows: In section 2, we briefly recall Littlewood-Paley decomposition, Besov spaces  and useful analysis tools.
Section 3 is devoted to those proofs of Theorem \ref{Thm1.2} and Corollaries \ref{Cor1.1}-\ref{Cor1.2}.

\section{Preliminary}\setcounter{equation}{0}
Throughout the paper, $C>0$ stands for a generic ``constant''. For brevity, we write
$f\lesssim g$ instead of $f\leq Cg$. The notation $f\approx g$ means that $%
f\lesssim g$ and $g\lesssim f$. For any Banach space $X$ and $f,g\in X$, we agree that
$\left\|(f,g)\right\| _{X}\triangleq \left\|f\right\| _{X}+\left\|g\right\|_{X}$. For
all $T>0$ and $\theta \in[1,+\infty]$, we denote by
$L_{T}^{\theta}(X) \triangleq L^{\theta}\left([0,T];X\right)$ the set of measurable functions $f:[0,T]\rightarrow X$ such that $t\mapsto\left\|f(t)\right\|_{X}$ is in $L^{\theta}(0,T)$.

To make the paper self-contained, we briefly recall Littlewood-Paley decomposition, Besov spaces and analysis tools.
The reader is referred to Chap. 2 and Chap. 3 of \cite{BCD} for more details. Let's begin with the homogeneous Littlewood-Paley decomposition. For that purpose, we fix some smooth
radial non increasing function $\chi $ with $\mathrm{Supp}\,\chi \subset
B\left(0,\frac {4}{3}\right)$ and $\chi \equiv 1$ on $B\left(0,\frac
{3}{4}\right)$, then set $\varphi (\xi) =\chi (\xi/2)-\chi (\xi)$
so that
$$
\sum_{j\in \mathbb{Z}}\varphi ( 2^{-j}\cdot ) =1\ \ \hbox{in}\ \
\mathbb{R}^{d}\setminus \{ 0\} \ \ \hbox{and}\ \ \mathrm{Supp}\,\varphi \subset \left\{ \xi \in \mathbb{R}^{d}:3/4\leq |\xi|\leq 8/3\right\} .
$$
The homogeneous dyadic blocks $\dot{\Delta}_{j}$ are defined by
$$
\dot{\Delta}_{j}f\triangleq \varphi (2^{-j}D)f=\mathcal{F}^{-1}\left(\varphi
(2^{-j}\cdot )\mathcal{F}f\right)=2^{jd}h(2^{j}\cdot )\star f\ \ \hbox{with}\ \
h\triangleq \mathcal{F}^{-1}\varphi .
$$
Formally, we have the homogeneous decomposition as follows
\begin{equation} \label{Eq:2.1}
f=\sum_{j\in \mathbb{Z}}\dot{\Delta}_{j}f,
\end{equation}
for any tempered distribution $f\in S^{\prime }(\mathbb{R}^{d})$. As it holds only modulo polynomials,
it is convenient to consider the subspace of those tempered distributions $f$ such that
\begin{equation}\label{Eq:2.2}
\lim_{j\rightarrow -\infty }\left\| \dot{S}_{j}f\right\| _{L^{\infty} }=0,
\end{equation}
where $\dot{S}_{j}f$ stands for the low frequency cut-off defined by $\dot{S}_{j}f\triangleq\chi (2^{-j}D)f$. Indeed,
if \eqref{Eq:2.2} is fulfilled, then \eqref{Eq:2.1} holds in $S'(\mathbb{R}^{d})$. For convenience, we denote by $S'_{0}(\mathbb{R}^{d})$ the subspace of tempered distributions satisfying \eqref{Eq:2.2}.

The homogeneous Besov space is defined in terms of above Littlewood-Paley decomposition.
\begin{defn}\label{Defn2.1}
For $\sigma\in \mathbb{R}$ and $1\leq p,r\leq\infty,$ the homogeneous
Besov spaces $\dot{B}^{\sigma}_{p,r}$ is defined by
$$\dot{B}^{\sigma}_{p,r}\triangleq\left\{f\in S'_{0}:\left\|f\right\|_{\dot{B}^{\sigma}_{p,r}}<+\infty\right\},$$
where
\begin{equation}\label{Eq:2.3}
\left\|f\right\|_{\dot B^{\sigma}_{p,r}}\triangleq\left\|\left(2^{j\sigma}\left\|\ddj  f\right\|_{L^p}\right)\right\|_{\ell^{r}(\Z)}.
\end{equation}
\end{defn}

We often use the following classical properties (see \cite{BCD}):

$\bullet$ \ \emph{Scaling invariance:} For any $\sigma\in \mathbb{R}$ and $(p,r)\in
[1,\infty ]^{2}$, there exists a constant $C=C(\sigma,p,r,d)$ such that for all $\lambda >0$ and $f\in \dot{B}_{p,r}^{\sigma}$, we have
$$
C^{-1}\lambda ^{\sigma-\frac {d}{p}}\left\|f\right\|_{\dot{B}_{p,r}^{\sigma}}
\leq \left\|f(\lambda \cdot)\right\|_{\dot{B}_{p,r}^{\sigma}}\leq C\lambda ^{\sigma-\frac {d}{p}}\left\|f\right\|_{\dot{B}_{p,r}^{\sigma}}.
$$

$\bullet$ \ \emph{Completeness:} $\dot{B}^{\sigma}_{p,r}$ is a Banach space whenever $%
\sigma<\frac{d}{p}$ or $\sigma\leq \frac{d}{p}$ and $r=1$.

$\bullet$ \ \emph{Interpolation:} The following inequality is satisfied for $1\leq p,r_{1},r_{2}, r\leq \infty, \sigma_{1}\neq \sigma_{2}$ and $\theta \in (0,1)$:
$$\left\|f\right\|_{\dot{B}_{p,r}^{\theta \sigma_{1}+(1-\theta )\sigma_{2}}}\lesssim \left\|f\right\| _{\dot{B}_{p,r_{1}}^{\sigma_{1}}}^{\theta} \left\|f\right\|_{\dot{B}_{p,r_2}^{\sigma_{2}}}^{1-\theta }$$
with $\frac{1}{r}=\frac{\theta}{r_{1}}+\frac{1-\theta}{r_{2}}$.

$\bullet$ \ \emph{Action of Fourier multipliers:} If $F$ is a smooth homogeneous of
degree $m$ function on $\mathbb{R}^{d}\backslash \{0\}$ then
$$F(D):\dot{B}_{p,r}^{\sigma}\rightarrow \dot{B}_{p,r}^{\sigma-m}.$$

The embedding properties will be used several times throughout the paper.
\begin{prop} \label{Prop2.1} (Embedding for Besov spaces on $\mathbb{R}^{d}$)
\begin{itemize}
\item For any $p\in[1,\infty]$ we have the  continuous embedding
$\dot {B}^{0}_{p,1}\hookrightarrow L^{p}\hookrightarrow \dot {B}^{0}_{p,\infty}.$
\item If $\sigma\in\R$, $1\leq p_{1}\leq p_{2}\leq\infty$ and $1\leq r_{1}\leq r_{2}\leq\infty,$
then $\dot {B}^{\sigma}_{p_1,r_1}\hookrightarrow
\dot {B}^{\sigma-d(\frac{1}{p_{1}}-\frac{1}{p_{2}})}_{p_{2},r_{2}}$.
\item The space  $\dot {B}^{\frac {d}{p}}_{p,1}$ is continuously embedded in the set  of
bounded  continuous functions \emph{(}going to zero at infinity if, additionally, $p<\infty$\emph{)}.
\end{itemize}
\end{prop}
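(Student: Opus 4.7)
The plan is to reduce each of the three embeddings to an application of Bernstein's inequality on individual dyadic blocks, combined with elementary $\ell^r$ summation arguments.

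For the first item, the embedding $\dot B^{0}_{p,1}\hookrightarrow L^p$ is immediate from the $L^p$ triangle inequality applied to the Littlewood-Paley decomposition \eqref{Eq:2.1}, which genuinely converges in $\mathcal{S}'$ thanks to the assumption $f\in S'_0$. For the reverse embedding $L^p\hookrightarrow \dot B^{0}_{p,\infty}$, I would invoke the convolution representation $\ddj f = 2^{jd}\, h(2^j\cdot)\star f$ with $h=\mathcal{F}^{-1}\varphi\in \mathcal{S}$ and apply Young's inequality to conclude $\|\ddj f\|_{L^p}\le \|h\|_{L^1}\,\|f\|_{L^p}$ uniformly in $j\in\Z$, so that $\sup_j \|\ddj f\|_{L^p}\lesssim \|f\|_{L^p}$.

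The second and third items both hinge on Bernstein's inequality, which asserts that for any distribution $g$ whose Fourier transform is supported in an annulus $\{|\xi|\sim 2^j\}$ one has $\|g\|_{L^{p_2}}\lesssim 2^{jd(1/p_1-1/p_2)}\|g\|_{L^{p_1}}$; this follows by writing $g=\tilde\varphi(2^{-j}D)g$ for a cut-off $\tilde\varphi$ adapted to a slightly enlarged annulus and applying Young to the resulting convolution. For the second embedding, applying Bernstein to each $\ddj f$ and multiplying by the weight $2^{j(\sigma-d(1/p_1-1/p_2))}$ yields
\[
2^{j(\sigma-d(1/p_1-1/p_2))}\|\ddj f\|_{L^{p_2}}\lesssim 2^{j\sigma}\|\ddj f\|_{L^{p_1}},
\]
after which the inclusion $\ell^{r_1}\hookrightarrow \ell^{r_2}$ closes the argument. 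For the third embedding, taking $p_1=p$ and $p_2=\infty$ gives $\|\ddj f\|_{L^\infty}\lesssim 2^{jd/p}\|\ddj f\|_{L^p}$, so
\[
\|f\|_{L^\infty}\le \sum_{j\in\Z}\|\ddj f\|_{L^\infty}\lesssim \|f\|_{\dot B^{d/p}_{p,1}}.
\]
Each block $\ddj f$ is the convolution of $f$ with a Schwartz function, hence smooth and bounded, and the $\ell^1(\Z)$ summability promotes pointwise convergence to uniform convergence, which yields continuity of the limit. When $p<\infty$ the Riemann-Lebesgue lemma places each $\ddj f$ in $C_0(\R^d)$, and uniform approximation by partial sums preserves decay at infinity.

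The only real subtlety is framework-related rather than analytic: the identity $f=\sum_j \ddj f$ holds only modulo polynomials on $\mathcal{S}'(\R^d)$, so each embedding must be posed inside the ambient space $S'_0(\R^d)$, where condition \eqref{Eq:2.2} guarantees genuine convergence of the series in $\mathcal{S}'$. Once this is fixed, the three statements are essentially bookkeeping around Bernstein's inequality and the dyadic decomposition, and can be borrowed wholesale from Chap.~2 of \cite{BCD}.
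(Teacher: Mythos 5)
Your argument is correct and is exactly the standard Bernstein-plus-dyadic-summation proof that the paper implicitly relies on: Proposition \ref{Prop2.1} is stated without proof and referred to Chapters 2--3 of \cite{BCD}, where precisely this route is taken. The only cosmetic slip is invoking the Riemann--Lebesgue lemma for $\ddj f\in C_0$ when $p<\infty$; the cleaner justification is that $\ddj f$ is the convolution of an $L^{p}$ function with a Schwartz (hence $L^{p'}$) function, which vanishes at infinity, but this does not affect the validity of the proof.
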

The product estimate in Besov spaces plays a fundamental role in bounding bilinear terms of \eqref{Eq:1.7}  (see for example, \cite{BCD,DX}).
\begin{prop}\label{Prop2.2}
Let $\sigma>0$ and $1\leq p,r\leq\infty$. Then $\dot{B}^{\sigma}_{p,r}\cap L^{\infty}$ is an algebra and
$$
\left\|fg\right\|_{\dot{B}^{\sigma}_{p,r}}\lesssim \left\|f\right\|_{L^{\infty}}\left\|g\right\|_{\dot{B}^{\sigma}_{p,r}}+\left\|g\right\|_{L^{\infty}}\left\|f\right\|_{\dot{B}^{\sigma}_{p,r}}.
$$
Let the real numbers $\sigma_{1},$ $\sigma_{2},$ $p_1$  and $p_2$ fulfill
$$
\sigma_{1}+\sigma_{2}>0,\quad \sigma_{1}\leq\frac {d}{p_{1}},\quad\sigma_{2}\leq\frac {d}{p_{2}},\quad
\sigma_{1}\geq\sigma_{2},\quad\frac{1}{p_{1}}+\frac{1}{p_{2}}\leq1.
$$
Then we have
$$\left\|fg\right\|_{\dot{B}^{\sigma_{2}}_{q,1}}\lesssim \left\|f\right\|_{\dot{B}^{\sigma_{1}}_{p_{1},1}}\left\|g\right\|_{\dot{B}^{\sigma_{2}}_{p_{2},1}}\quad\hbox{with}\quad
\frac1{q}=\frac1{p_{1}}+\frac1{p_{2}}-\frac{\sigma_{1}}d\cdotp$$
Additionally, for exponents $\sigma>0$ and $1\leq p_{1},p_{2},q\leq\infty$ satisfying
$$\frac{d}{p_{1}}+\frac{d}{p_{2}}-d\leq \sigma \leq\min\left(\frac {d}{p_{1}},\frac {d}{p_{2}}\right)\quad\hbox{and}\quad \frac{1}{q}=\frac {1}{p_{1}}+\frac {1}{p_{2}}-\frac{\sigma}{d},$$
we have
$$\left\|fg\right\|_{\dot{B}^{-\sigma}_{q,\infty}}\lesssim\left\|f\right\|_{\dot{B}^{\sigma}_{p_{1},1}}\left\|g\right\|_{\dot{B}^{-\sigma}_{p_{2},\infty}}.$$
\end{prop}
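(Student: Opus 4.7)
The plan is to prove all three estimates via Bony's paraproduct decomposition
\begin{equation*}
fg = T_f g + T_g f + R(f,g),
\end{equation*}
with $T_f g := \sum_j \dot{S}_{j-1}f\,\dot{\Delta}_j g$ and $R(f,g) := \sum_{|\nu|\leq 1}\sum_j \dot{\Delta}_j f\,\dot{\Delta}_{j+\nu} g$. Two spectral facts drive everything: the Fourier support of $\dot{S}_{j-1}f\,\dot{\Delta}_j g$ lies in an annulus of size $\sim 2^j$, while that of $\dot{\Delta}_j f\,\dot{\Delta}_{j+\nu}g$ lies in a ball of size $\sim 2^j$. Consequently $\dot{\Delta}_q(T_f g)$ receives contributions only from $|q-j|\leq N_0$ for some absolute integer $N_0$, while $\dot{\Delta}_q R(f,g)$ receives contributions only from $j\geq q-N_0$.

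For the algebra estimate, combining the annulus constraint with $\|\dot{S}_{j-1}f\|_{L^\infty}\leq C\|f\|_{L^\infty}$ gives $2^{q\sigma}\|\dot{\Delta}_q(T_f g)\|_{L^p}\lesssim \|f\|_{L^\infty}\sum_{|q-j|\leq N_0}2^{q\sigma}\|\dot{\Delta}_j g\|_{L^p}$, whose $\ell^r(\mathbb{Z})$ norm is controlled by $\|f\|_{L^\infty}\|g\|_{\dot B^\sigma_{p,r}}$; the piece $T_g f$ is symmetric. For the remainder one uses $\|\dot{\Delta}_q R(f,g)\|_{L^p}\lesssim \sum_{j\geq q-N_0}\|\dot{\Delta}_j f\|_{L^p}\|\dot{\Delta}_{j+\nu}g\|_{L^\infty}$; after multiplying by $2^{q\sigma}$, the geometric series $\sum_{j\geq q-N_0}2^{(q-j)\sigma}$ converges \emph{precisely} because $\sigma>0$, and distributing the $L^\infty$ norm symmetrically between the two factors yields the stated bilinear bound.

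For the second estimate one again applies the Bony decomposition, but now Bernstein's inequality $\|\dot{\Delta}_j h\|_{L^\infty}\lesssim 2^{jd/r}\|\dot{\Delta}_j h\|_{L^r}$ is invoked to convert the $L^{p_1}$ norm of the low-frequency factor into a size comparable to $L^\infty$; the hypothesis $\sigma_1\leq d/p_1$ makes $2^{j(d/p_1-\sigma_1)}\|\dot{\Delta}_j f\|_{L^{p_1}}$ controlled by $\|f\|_{\dot B^{\sigma_1}_{p_1,1}}$. H\"older in the output $L^q$ with $1/q=1/p_1+1/p_2-\sigma_1/d$, together with the convergence of geometric series using $\sigma_1+\sigma_2>0$ for the remainder, delivers the bound; the constraint $\sigma_1\geq \sigma_2$ is what forces the output regularity to be $\sigma_2$ rather than $\sigma_1$ (since the paraproduct $T_g f$ naturally lives at regularity $\sigma_1$, which embeds into $\sigma_2$-regularity only when $\sigma_1\geq\sigma_2$). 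The third estimate is the negative-regularity analog: here the output norm is the weaker $\dot B^{-\sigma}_{q,\infty}$, so only $\ell^\infty$ summability is needed, which is exactly why the condition on the remainder relaxes from strict positivity to the borderline case $\sigma+(-\sigma)=0$; the two-sided hypothesis $d/p_1+d/p_2-d\leq \sigma\leq\min(d/p_1,d/p_2)$ is precisely what makes all the relevant Bernstein embeddings admissible and the series barely convergent in $\ell^\infty$.

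The only genuine obstacle is the careful bookkeeping for the third estimate: each of the three pieces of Bony's decomposition must be tracked with the correct $\ell^\infty$ weighting, and the index relation $1/q=1/p_1+1/p_2-\sigma/d$ must match the Bernstein exponents at every step. The first two estimates are standard consequences of paraproduct theory going back to Bony's original work, so no new idea is required there; only the third demands that one verify the borderline $0$-regularity remainder is still summable in $\ell^\infty$, which it is, with the cancellation built into the output index $-\sigma$.
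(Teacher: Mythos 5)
Your proposal is correct and follows the standard Bony paraproduct argument, which is precisely the route behind this proposition in the sources the paper cites (\cite{BCD,DX}) --- the paper itself gives no proof and simply quotes the result as a preliminary. One small caution: the parenthetical claim that regularity $\sigma_1$ ``embeds'' into regularity $\sigma_2$ when $\sigma_1\geq\sigma_2$ is false for \emph{homogeneous} Besov spaces; the condition $\sigma_1\geq\sigma_2$ actually enters when you sum the low-frequency blocks of $g$ in $T_g f$ via Bernstein, $\left\|\dot S_{j-1}g\right\|_{L^{\mu}}\lesssim\sum_{j'\leq j-2}2^{j'(\sigma_1-\sigma_2)}\,2^{j'\sigma_2}\left\|\dot\Delta_{j'}g\right\|_{L^{p_2}}\lesssim 2^{j(\sigma_1-\sigma_2)}\left\|g\right\|_{\dot B^{\sigma_2}_{p_2,1}}$, which is the factor that lands the output at regularity $\sigma_2$.
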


To handle the case of $p>d$ in the proof of Theorem \ref{Thm1.2}, just resorting to Proposition \ref{Prop2.2} does not allow to get suitable bounds
for the low frequency part of some nonlinear terms, so we need to take advantage of the following non-classical product estimates (see \cite{DX}).
\begin{prop}\label{Prop2.3} Let $j_{0}\in\Z,$ and denote $z^{\ell}\triangleq\dot S_{j_{0}}z$, $z^{h}\triangleq z-z^{\ell}$ and, for any $s\in\R$,
$$
\left\|z\right\|_{\dot B^{\sigma}_{2,\infty}}^{\ell}\triangleq\sup_{j\leq j_{0}}2^{j\sigma} \left\|\ddj z\right\|_{L^2}.
$$
There exists a universal integer $N_{0}$ such that  for any $2\leq p\leq 4$ and $\sigma>0,$ we have
\begin{eqnarray}\label{Eq:2.4}
&&\left\|f g^{h}\right\|_{\dot {B}^{-s_{0}}_{2,\infty}}^{\ell}\leq C \left(\left\|f\right\|_{\dot {B}^{\sigma}_{p,1}}+\left\|\dot S_{j_{0}+N_{0}}f\right\|_{L^{{p}^{*}}}\right)\left\|g^{h}\right\|_{\dot{B}^{-\sigma}_{p,\infty}},\\
\label{Eq:2.5}
&&\left\|f^{h} g\right\|_{\dot {B}^{-s_{0}}_{2,\infty}}^{\ell}
\leq C \left(\left\|f^{h}\right\|_{\dot{B}^{\sigma}_{p,1}}+\left\|\dot{S}_{j_{0}+N_{0}}f^{h}\right\|_{L^{p^{*}}}\right)\left\|g\right\|_{\dot {B}^{-\sigma}_{p,\infty}}
\end{eqnarray}
with  $s_{0}\triangleq \frac{2d}{p}-\frac {d}{2}$ and $\frac1{p^{*}}\triangleq\frac{1}{2}-\frac{1}{p},$
and $C$ depending only on $j_{0}$, $d$ and $\sigma$.
\end{prop}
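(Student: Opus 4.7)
\emph{Plan.} The plan is to attack both inequalities via Bony's paraproduct decomposition, with care about where the spectra live, and to exploit a Bernstein step $L^{p/2}\hookrightarrow L^{2}$ for frequency-localized functions that will produce the critical exponent $s_{0}$. For \eqref{Eq:2.4} I write
\[
fg^{h}=T_{f}g^{h}+T_{g^{h}}f+R(f,g^{h})
\]
(Bony's decomposition), and it suffices to bound $\sup_{j\leq j_{0}}2^{-js_{0}}\|\dot\Delta_{j}(\cdot)\|_{L^{2}}$ for each of the three pieces. The universal integer $N_{0}$ will be chosen so that the Fourier support of $g^{h}$ lies in $\{|\xi|\gtrsim 2^{j_{0}}\}$ and so that $\dot S_{k-1}f=\dot S_{k-1}\dot S_{j_{0}+N_{0}}f$ whenever $k\leq j_{0}+1$, which absorbs the inevitable spectral overlap of the low/high-frequency cut-off.

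\emph{Paraproduct pieces.} Because the $g^{h}$-factor inside $T_{f}g^{h}$ and $T_{g^{h}}f$ carries spectrum at $\sim 2^{j_{0}}$, support considerations force $\dot\Delta_{j}$ of either paraproduct to vanish outside a bounded window of $j$'s around $j_{0}$. Within that window I would use H\"older with $\frac{1}{p^{*}}+\frac{1}{p}=\frac{1}{2}$ to bound the product of two dyadic blocks in $L^{2}$: the low-frequency factor is controlled by $\|\dot S_{j_{0}+N_{0}}f\|_{L^{p^{*}}}$ through the uniform boundedness of dyadic projectors on $L^{p^{*}}$, while the other factor is estimated by $\|\dot\Delta_{k}g^{h}\|_{L^{p}}\leq 2^{k\sigma}\|g^{h}\|_{\dot B^{-\sigma}_{p,\infty}}$. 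Summing over the finite admissible range of $k$'s and multiplying by $2^{-js_{0}}$ yields a constant depending only on $j_{0},d,\sigma$, which is exactly what the right-hand side of \eqref{Eq:2.4} permits.

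\emph{Remainder, and the second inequality.} For $R(f,g^{h})=\sum_{k}\dot\Delta_{k}f\,\widetilde{\dot\Delta}_{k}g^{h}$, the key is to combine H\"older $L^{p}\times L^{p}\to L^{p/2}$ with the Bernstein embedding $L^{p/2}\hookrightarrow L^{2}$ on each $\dot\Delta_{j}$-localized summand, which is legitimate since $p\in[2,4]$ and brings out the factor $2^{jd(2/p-1/2)}=2^{js_{0}}$. Summing over $k\geq\max(j-C,j_{0}-N_{0}-C)$, the bound $\|\widetilde{\dot\Delta}_{k}g^{h}\|_{L^{p}}\leq 2^{k\sigma}\|g^{h}\|_{\dot B^{-\sigma}_{p,\infty}}$ together with the $\ell^{1}$-summability intrinsic to $\dot B^{\sigma}_{p,1}$ collapses the sum to $\|f\|_{\dot B^{\sigma}_{p,1}}\|g^{h}\|_{\dot B^{-\sigma}_{p,\infty}}$, and the factor $2^{js_{0}}$ cancels the $2^{-js_{0}}$ coming from the norm. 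The second inequality \eqref{Eq:2.5} will be handled in the same spirit via $f^{h}g=T_{f^{h}}g+T_{g}f^{h}+R(f^{h},g)$: the block $T_{f^{h}}g$ is dealt with exactly as $T_{f}g^{h}$ was, while for $T_{g}f^{h}$, since $g$ is not spectrally restricted, I plan to use Bernstein $L^{p}\hookrightarrow L^{p^{*}}$ on each $\dot\Delta_{k'}g$ to get $\|\dot S_{k-1}g\|_{L^{p^{*}}}\lesssim 2^{k(s_{0}+\sigma)}\|g\|_{\dot B^{-\sigma}_{p,\infty}}$, pair it with $\|\dot\Delta_{k}f^{h}\|_{L^{p}}\leq 2^{-k\sigma}\|f^{h}\|_{\dot B^{\sigma}_{p,1}}$, and absorb the resulting $2^{ks_{0}}$ by $2^{-js_{0}}$ on the summable range $k\approx j$.

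\emph{Main obstacle.} The hard part is the remainder: the Bernstein step $L^{p/2}\hookrightarrow L^{2}$ is precisely what pins the exponent $s_{0}=\frac{2d}{p}-\frac{d}{2}$ on the left-hand side of both inequalities, and the $\ell^{1}$-summability of $\dot B^{\sigma}_{p,1}$ is indispensable to close the $k$-sum. The other subtlety is that $\|\dot S_{j_{0}+N_{0}}f\|_{L^{p^{*}}}$ must be kept on the right-hand side of \eqref{Eq:2.4}, because when $\sigma<s_{0}$ the low-frequency factor $\dot S_{k-1}f$ cannot be controlled by $\|f\|_{\dot B^{\sigma}_{p,1}}$ alone; beyond this, the work is careful frequency-support bookkeeping with a fixed choice of $N_{0}$.
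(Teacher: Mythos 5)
The paper does not actually prove Proposition~\ref{Prop2.3} --- it is quoted from \cite{DX} --- but your Bony-decomposition argument (paraproducts confined by spectral support to a bounded window of $j,k$ around $j_{0}$ and estimated by H\"older $L^{p^{*}}\times L^{p}\to L^{2}$ with the $f$-factor rewritten through $\dot S_{j_{0}+N_{0}}$, remainder estimated by H\"older $L^{p}\times L^{p}\to L^{p/2}$ followed by Bernstein $L^{p/2}\hookrightarrow L^{2}$ on the $\dot\Delta_{j}$-block, which produces exactly the factor $2^{js_{0}}$ since $p\le4$) is precisely the proof given in that reference, and it is correct, including the treatment of $T_{g}f^{h}$ via Bernstein $L^{p}\hookrightarrow L^{p^{*}}$ on $\dot S_{k-1}g$ (legitimate because $s_{0}+\sigma>0$). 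One small slip in your closing remark: the extra term $\|\dot S_{j_{0}+N_{0}}f\|_{L^{p^{*}}}$ is genuinely needed when $\sigma>s_{0}$, not when $\sigma<s_{0}$ as you wrote --- for $\sigma\le s_{0}$ one has $\|\dot S_{k-1}f\|_{L^{p^{*}}}\lesssim\sum_{k'\le k-2}2^{k'(s_{0}-\sigma)}\,2^{k'\sigma}\|\dot\Delta_{k'}f\|_{L^{p}}\lesssim\|f\|_{\dot B^{\sigma}_{p,1}}$ because $k$ stays in a bounded window, whereas for $\sigma>s_{0}$ the geometric factor diverges as $k'\to-\infty$ --- but this does not affect your proof, since you keep the $L^{p^{*}}$ term on the right-hand side in all cases.
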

System \eqref{Eq:1.7} also involves compositions of functions (through $I(a)$, $k(a)$, $\tilde{\lambda}(a)$ and $\tilde{\mu}(a)$) that
are handled according to the following conclusion.
\begin{prop}\label{Prop2.4}
Let $F:\R\rightarrow\R$ be  smooth with $F(0)=0$.
For  all  $1\leq p,r\leq\infty$ and $\sigma>0$ we have
$F(f)\in \dot {B}^{\sigma}_{p,r}\cap L^{\infty}$  for  $f\in \dot {B}^{\sigma}_{p,r}\cap L^{\infty},$  and
$$\left\|F(f)\right\|_{\dot B^\sigma_{p,r}}\leq C\left\|f\right\|_{\dot B^\sigma_{p,r}}$$
with $C$ depending only on $\left\|f\right\|_{L^{\infty}}$, $F'$ \emph{(}and higher derivatives\emph{)}, $\sigma$, $p$ and $d$.

In the case $\sigma>-\min\left(\frac {d}{p},\frac {d}{p'}\right)$ then $f\in\dot {B}^{\sigma}_{p,r}\cap\dot {B}^{\frac {d}{p}}_{p,1}$
implies that $F(f)\in \dot {B}^{\sigma}_{p,r}\cap\dot {B}^{\frac {d}{p}}_{p,1}$, and we have
$$\left\|F(f)\right\|_{\dot B^{\sigma}_{p,r}}\leq C(1+\left\|f\right\|_{\dot {B}^{\frac {d}{p}}_{p,1}})\left\|f\right\|_{\dot {B}^{\sigma}_{p,r}}.$$
\end{prop}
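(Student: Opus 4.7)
The plan is to follow the classical telescoping argument for composition estimates, as developed in the Bahouri-Chemin-Danchin framework. For the first part (the case $\sigma>0$), I would write the identity
$$F(f)=\sum_{j\in\Z}\bigl(F(\dot S_{j+1}f)-F(\dot S_j f)\bigr)=\sum_{j\in\Z}m_j\,\ddj f,\qquad m_j\triangleq\int_0^1 F'\bigl(\dot S_j f+t\,\ddj f\bigr)\,dt,$$
which converges in $S'_0$ because $\dot S_j f\to 0$ as $j\to-\infty$ in $L^\infty$ (here $F(0)=0$ is crucial). Since $\|\dot S_j f\|_{L^\infty}\lesssim\|f\|_{L^\infty}$, one has $\|m_j\|_{L^\infty}\leq \sup_{|y|\leq C\|f\|_{L^\infty}}|F'(y)|$, and similar bounds for derivatives of $m_j$ in terms of higher derivatives of $F$ on the same interval. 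Then I would estimate $\|\dk(m_j\ddj f)\|_{L^p}$ using a case split on whether $|j-k|$ is small or large: the small-difference case is controlled directly by the $L^\infty$ bound on $m_j$ together with the $L^p$ bound on $\ddj f$, while the large-difference case uses Bernstein's inequality to transfer derivatives onto $m_j$, whose Besov regularity can be bounded by induction on $\sigma$ via the product estimate of Proposition~\ref{Prop2.2}. Summing in $j$ against the weight $2^{k\sigma}$ and taking the $\ell^r$ norm in $k$ yields the desired bound.

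For the second part, where $\sigma$ may be negative (up to $-\min(d/p,d/p')$), the telescoping sum no longer converges absolutely and the algebra property is unavailable, so I would switch to Bony's paraproduct decomposition. Writing
$$F(f)=G(f)\cdot f,\qquad G(f)\triangleq\int_0^1 F'(tf)\,dt,$$
one has $F(f)=T_{G(f)}f+T_f G(f)+R\bigl(G(f),f\bigr)$. The first part of the proposition, applied to $G$ (which is smooth with $G(0)=F'(0)$ bounded), gives control of $G(f)$ in $\dot B^{d/p}_{p,1}\cap L^\infty$ in terms of $\|f\|_{\dot B^{d/p}_{p,1}}$. Then each paraproduct and the remainder is estimated by standard continuity results for $T$ and $R$, using that $G(f)$ is at critical regularity and $f\in\dot B^\sigma_{p,r}$; the constraint $\sigma>-\min(d/p,d/p')$ is exactly what is needed to make the remainder $R(G(f),f)$ make sense in $\dot B^\sigma_{p,r}$.

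The main obstacle will be the bookkeeping in the negative-regularity case: one must verify that each piece of the paraproduct decomposition converges in $S'_0$ (not just modulo polynomials), and that the remainder $R(G(f),f)$ indeed lies in the target space, which is where the lower bound on $\sigma$ enters. A secondary technical point is that the constant $C$ in both estimates depends nonlinearly on $\|f\|_{L^\infty}$ through the sup norms of $F$ and its derivatives on the interval $[-\|f\|_{L^\infty},\|f\|_{L^\infty}]$; this must be tracked carefully through the induction on $\sigma$ used in the first part. Once these two ingredients are assembled, the stated bounds follow directly, and we recover exactly the form of Proposition~\ref{Prop2.4} needed to handle the nonlinearities $I(a)$, $k(a)$, $\widetilde\mu(a)$ and $\widetilde\lambda(a)$ appearing in system \eqref{Eq:1.7}.
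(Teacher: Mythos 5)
The paper does not actually prove Proposition \ref{Prop2.4}: it is recalled as a standard tool with a pointer to Chapters 2--3 of \cite{BCD}, so there is no in-paper argument to compare against, and your outline reproduces the standard proof from that reference (Meyer's first-linearization/telescoping sum $F(f)=\sum_j m_j\,\dot\Delta_j f$ with $\|D^\ell m_j\|_{L^\infty}\lesssim 2^{j\ell}$ for the case $\sigma>0$, and the paralinearization $F(f)=G(f)\,f$ combined with Bony's decomposition and the product laws of Proposition \ref{Prop2.2} for the second part). The one point to tighten is that part one applies only to functions vanishing at the origin, whereas $G(0)=F'(0)$ need not vanish: you must split $F(f)=F'(0)\,f+\bigl(G(f)-F'(0)\bigr)f$, treat the linear term directly, and invoke part one on $G-G(0)$ to control $G(f)-F'(0)$ in $\dot{B}^{\frac{d}{p}}_{p,1}\cap L^{\infty}$ before estimating the paraproducts and the remainder.
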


In addition, we also recall the classical \emph{Bernstein inequality}:
\begin{equation}\label{Eq:2.6}
\left\|D^{k}f\right\|_{L^{b}}
\leq C^{1+k} \lambda^{k+d(\frac{1}{a}-\frac{1}{b})}\left\|f\right\|_{L^{a}}
\end{equation}
that holds for all function $f$ such that $\mathrm{Supp}\,\mathcal{F}f\subset\left\{\xi\in \R^{d}: \left|\xi\right|\leq R\lambda\right\}$ for some $R>0$
and $\lambda>0$, if $k\in\N$ and $1\leq a\leq b\leq\infty$.

More generally, if we assume $f$ to satisfy $\mathrm{Supp}\,\mathcal{F}f\subset \left\{\xi\in \R^{d}:
R_{1}\lambda\leq\left|\xi\right|\leq R_{2}\lambda\right\}$ for some $0<R_{1}<R_{2}$  and $\lambda>0$,
then for any smooth  homogeneous of degree $m$ function $A$ on $\R^d\setminus\{0\}$ and $1\leq a\leq\infty,$ we have
(see e.g. Lemma 2.2 in \cite{BCD}):
\begin{equation}\label{Eq:2.7}
\left\|A(D)f\right\|_{L^{a}}\approx\lambda^{m}\left\|f\right\|_{L^{a}}.
\end{equation}
An obvious  consequence of \eqref{Eq:2.6} and \eqref{Eq:2.7} is that
$\left\|D^{k}f\right\|_{\dot{B}^{s}_{p, r}}\thickapprox\left \|f\right\|_{\dot{B}^{s+k}_{p, r}}$ for all $k\in\N.$
Also, the following commutator estimate (see \cite{DX}) will be used in the second step of the proof of Theorem \ref{Thm1.2}.
\begin{prop}\label{Prop2.5}
Let $1\leq p,\,p_{1}\leq\infty$ and
\begin{equation*}
-\min\left(\frac{d}{p_{1}},\frac{d}{p'}\right)<\sigma\leq1+\min\left(\frac {d}{p},\frac{d}{p_{1}}\right).
\end{equation*}
There exists a constant $C>0$ depending only on $\sigma$ such that for all $j\in\Z$ and $\ell\in\left\{1,\cdots,d\right\}$, we have
\begin{equation*}
\left\|\left[v\cdot\nabla,\d_\ell\dot{\Delta}_{j}\right]a\right\|_{L^{p}}\leq
Cc_{j}2^{-j\left(\sigma-1\right)}\left\|\nabla v\right\|_{\dot{B}^{\frac{d}{p_{1}}}_{p_{1},1}}\left\|\nabla a\right\|_{\dot{B}^{\sigma-1}_{p,1}},
\end{equation*}
where the commutator
$\left[\cdot,\cdot\right]$ is defined by $\left[f,g\right]=fg-gf$ and $(c_{j})_{j\in\Z}$ denotes
a sequence such that $\left\|(c_{j})\right\|_{\ell^{1}}\leq 1$.
\end{prop}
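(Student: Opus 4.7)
\textbf{Proof plan for Proposition \ref{Prop2.5}.}
The strategy is to decompose the product $v\cdot\nabla a$ via Bony's paraproduct/remainder, regroup the result so that the ``main'' piece of the commutator is an honest commutator of a paraproduct with a Fourier multiplier (where one derivative is gained by a first-order Taylor expansion of the convolution kernel), and then control the remaining pieces by the standard continuity of $T$ and $R$ in homogeneous Besov spaces.

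First I would write $v^{k}\partial_{k}a=T_{v^{k}}\partial_{k}a+T_{\partial_{k}a}v^{k}+R(v^{k},\partial_{k}a)$, and apply the same decomposition to $v\cdot\nabla(\partial_{\ell}\dot{\Delta}_{j}a)$. Subtracting, the commutator splits into five contributions:
\begin{equation*}
[v\cdot\nabla,\partial_{\ell}\dot{\Delta}_{j}]a
=[T_{v^{k}},\partial_{\ell}\partial_{k}\dot{\Delta}_{j}]a
+T_{\partial_{k}\partial_{\ell}\dot{\Delta}_{j}a}v^{k}
-\partial_{\ell}\dot{\Delta}_{j}(T_{\partial_{k}a}v^{k})
+R(v^{k},\partial_{k}\partial_{\ell}\dot{\Delta}_{j}a)
-\partial_{\ell}\dot{\Delta}_{j}R(v^{k},\partial_{k}a).
\end{equation*}

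For the first term I would write $[T_{v^{k}},\partial_{\ell}\partial_{k}\dot{\Delta}_{j}]a=\sum_{|j'-j|\leq 4}[\dot{S}_{j'-1}v^{k},\partial_{\ell}\partial_{k}\dot{\Delta}_{j}]\dot{\Delta}_{j'}a$, realize $\partial_{\ell}\partial_{k}\dot{\Delta}_{j}$ as convolution with $2^{j(d+2)}\tilde h(2^{j}\cdot)$ for a Schwartz $\tilde h$, and exploit a first-order Taylor expansion of $\dot{S}_{j'-1}v^{k}(x-y)-\dot{S}_{j'-1}v^{k}(x)$ combined with Young's inequality to obtain
\begin{equation*}
\bigl\|[\dot{S}_{j'-1}v^{k},\partial_{\ell}\partial_{k}\dot{\Delta}_{j}]\dot{\Delta}_{j'}a\bigr\|_{L^{p}}
\lesssim 2^{j}\|\nabla\dot{S}_{j'-1}v\|_{L^{\infty}}\|\dot{\Delta}_{j'}a\|_{L^{p}}.
\end{equation*}
Since $|j'-j|\leq 4$ one has $2^{j}\|\dot{\Delta}_{j'}a\|_{L^{p}}\approx\|\dot{\Delta}_{j'}\nabla a\|_{L^{p}}$, and Bernstein together with the embedding $\dot{B}^{d/p_{1}}_{p_{1},1}\hookrightarrow L^{\infty}$ yields $\|\nabla\dot{S}_{j'-1}v\|_{L^{\infty}}\lesssim\|\nabla v\|_{\dot{B}^{d/p_{1}}_{p_{1},1}}$, so the contribution of this paraproduct commutator is bounded by $Cc_{j}2^{-j(\sigma-1)}\|\nabla v\|_{\dot{B}^{d/p_{1}}_{p_{1},1}}\|\nabla a\|_{\dot{B}^{\sigma-1}_{p,1}}$ for \emph{any} $\sigma\in\mathbb{R}$, with $c_{j}$ an $\ell^{1}$ sequence.

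For the remaining four terms I would invoke the standard continuity properties of the paraproduct and remainder operators in homogeneous Besov spaces. The two paraproduct corrections $T_{\partial_{k}\partial_{\ell}\dot{\Delta}_{j}a}v^{k}$ and $\partial_{\ell}\dot{\Delta}_{j}(T_{\partial_{k}a}v^{k})$ are both supported in frequencies $\sim 2^{j}$: after factoring out the gradient on $v$ and the derivatives on $a$, the paraproduct estimate gives the required $Cc_{j}2^{-j(\sigma-1)}$ bound under the upper constraint $\sigma\leq 1+\min(d/p,d/p_{1})$, which is precisely what is needed to keep the high-frequency regularity index of $a$ nonnegative in the paraproduct inequality. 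The two remainder corrections combine (after a discrete summation and the same frequency-localization considerations) into an estimate of the form $R:\dot{B}^{1+d/p_{1}}_{p_{1},1}\times\dot{B}^{\sigma-1}_{p,1}\to\dot{B}^{\sigma}_{p,1}$, which is valid exactly when the sum of the two regularity indices is positive, giving the lower constraint $\sigma>-\min(d/p_{1},d/p')$ (the dual case $d/p'$ appearing when the H\"older exponents in Bernstein force one to land in the duality pairing).

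The main technical obstacle is a careful bookkeeping of which frequency blocks of $v$ and $a$ can be coupled in each of the four ``correction'' terms after the common spectral localization at scale $2^{j}$; once this is tracked, the bounds for these terms follow from the standard paraproduct/remainder inequalities (Proposition \ref{Prop2.2} together with the Bernstein inequality \eqref{Eq:2.6}), and the sharp range of $\sigma$ in the statement comes from combining the upper restriction imposed by the paraproduct estimates with the lower one imposed by the remainder estimate. Summing over the at-most-finitely-many $j'$ in each decomposition yields an $\ell^{1}(\mathbb{Z})$ sequence $c_{j}$ with norm $\leq 1$ after normalization, completing the proof.
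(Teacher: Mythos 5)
The paper does not prove Proposition \ref{Prop2.5} at all: it is quoted with the pointer ``(see \cite{DX})'', and the argument in that reference (which in turn follows Lemma 2.100 of \cite{BCD}) is exactly the one you outline --- Bony decomposition, a first-order Taylor expansion of the convolution kernel of the localized multiplier to gain one derivative in the paraproduct commutator, and the standard continuity of $T$ and $R$ for the four correction terms, with the two constraints on $\sigma$ arising from the paraproduct and remainder estimates respectively. So your route is the intended one and the plan is sound.

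One algebraic slip should be fixed: the first term of your displayed identity should be $[T_{v^{k}},\partial_{\ell}\dot{\Delta}_{j}]\partial_{k}a$ rather than $[T_{v^{k}},\partial_{\ell}\partial_{k}\dot{\Delta}_{j}]a$. Since $\partial_{k}T_{v^{k}}a=T_{v^{k}}\partial_{k}a+T_{\partial_{k}v^{k}}a$, the five terms as you wrote them sum to the commutator minus $\partial_{\ell}\dot{\Delta}_{j}T_{\partial_{k}v^{k}}a$; that missing term is harmless (it is bounded by $c_{j}2^{-j(\sigma-1)}\|\nabla v\|_{L^{\infty}}\|\nabla a\|_{\dot{B}^{\sigma-1}_{p,1}}$ directly from the paraproduct estimate), but it must either be reinstated or the first term rewritten in the form above, for which your Taylor/Young argument applies verbatim with the kernel of $\partial_{\ell}\dot{\Delta}_{j}$ in place of that of $\partial_{\ell}\partial_{k}\dot{\Delta}_{j}$.
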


On the other hand, a class of mixed space-time Besov spaces are also used, which was initiated by J.-Y. Chemin and N. Lerner \cite{CL} (see also \cite{CJY} for the particular case of Sobolev spaces).
\begin{defn}\label{Defn2.2}
 For $T>0, \sigma\in\mathbb{R}, 1\leq r,\theta\leq\infty$, the homogeneous Chemin-Lerner space $\widetilde{L}^{\theta}_{T}(\dot{B}^{\sigma}_{p,r})$
is defined by
$$\widetilde{L}^{\theta}_{T}(\dot{B}^{\sigma}_{p,r})\triangleq\left\{f\in L^{\theta}\left(0,T;S'_{0}\right):\left\|f\right\|_{\widetilde{L}^{\theta}_{T}(\dot{B}^{\sigma}_{p,r})}<+\infty\right\},$$
where
\begin{equation}\label{Eq:2.8}
\left\|f\right\|_{\widetilde{L}^{\theta}_{T}(\dot{B}^{\sigma}_{p,r})}\triangleq\left\|\left(2^{j\sigma}\left\|\ddj  f\right\|_{L^{\theta}_{T}(L^{p})}\right)\right\|_{\ell^{r}(\Z)}.
\end{equation}
\end{defn}
For notational simplicity, index $T$ is omitted if $T=+\infty $.
We agree with the notation
\begin{equation*}
\tilde{\mathcal{C}}_{b}(\mathbb{R_{+}};\dot{B}_{p,r}^{\sigma})\triangleq \left\{f \in
\mathcal{C}(\mathbb{R_{+}};\dot{B}_{p,r}^{\sigma})\ \hbox{s.t}\ \left\|f\right\| _{\tilde{L}^{\infty}(\dot{B}_{p,r}^{\sigma})}<+\infty \right\} .
\end{equation*}
The Chemin-Lerner space $\widetilde{L}^{\theta}_{T}(\dot{B}^{\sigma}_{p,r})$ may be linked with the standard spaces $L_{T}^{\theta} (\dot{B}_{p,r}^{\sigma})$ by means of Minkowski's
inequality.
\begin{rem}\label{Rem2.1}
It holds that
$$\left\|f\right\|_{\widetilde{L}^{\theta}_{T}(B^{\sigma}_{p,r})}\leq\left\|f\right\|_{L^{\theta}_{T}(B^{\sigma}_{p,r})}\,\,\,
\mbox{if} \,\, \, r\geq\theta;\ \ \ \
\left\|f\right\|_{\widetilde{L}^{\theta}_{T}(B^{\sigma}_{p,r})}\geq\left\|f\right\|_{L^{\theta}_{T}(B^{\sigma}_{p,r})}\,\,\,
\mbox{if}\,\,\, r\leq\theta.
$$
\end{rem}
Restricting the above norms \eqref{Eq:2.3} and \eqref{Eq:2.8} to the low or high
frequencies parts of distributions will be fundamental in our method. For instance, let us fix some integer $j_{0}$ (the value of which will
follow from the proof of the high-frequency estimates) and put\footnote{Note that for technical reasons, we need a small
overlap between low and high frequencies.}
$$\left\| f\right\| _{\dot{B}_{p,1}^{\sigma}}^{\ell} \triangleq \sum_{j\leq
j_{0}}2^{j\sigma}\left\| \dot{\Delta}_{j}f\right\|_{L^{p}} \ \mbox{and} \ \left\|f\right\|_{\dot{B}_{p,1}^{\sigma}}^{h}\triangleq \sum_{j\geq j_{0}-1}2^{j\sigma}\left\| \dot{\Delta}_{j}f\right\| _{L^{p}},$$
$$\left\|f\right\| _{\tilde{L}_{T}^{\infty} (\dot{B}_{p,1}^{\sigma})}^{\ell} \triangleq
\sum_{j\leq j_{0}}2^{j\sigma}\left\|\dot{\Delta}_{j}f\right\|_{L_{T}^{\infty} (L^{p})} \
\mbox{and} \ \left\|f\right\| _{\tilde{L}_{T}^{\infty} (\dot{B}_{p,1}^{\sigma})}^{h}\triangleq \sum_{j\geq j_{0}-1}2^{j\sigma}\left\| \dot{\Delta}_{j}f\right\|
_{L_{T}^{\infty} (L^{p})}.$$

Let us finally recall the parabolic regularity estimate for the
heat equation to end this section.
\begin{prop}\label{Prop2.6}
Let $\sigma\in \R$, $(p,r)\in \left[1,\infty\right]^{2}$ and $1\leq \rho_{2}\leq\rho_{1}\leq\infty$. Let $u$ satisfy
$$\left\{\begin{array}{lll}
\d_{t}u-\mu\Delta u=f,\\
u_{|t=0}=u_{0}.
\end{array}
\right.$$
Then for all $T>0$ the following a priori estimate is fulfilled:
\begin{equation}\label{Eq:2.9}\mu^{\frac1{\rho_1}}\left\|u\right\|_{\tilde L_{T}^{\rho_1}(\dot B^{\sigma+\frac{2}{\rho_1}}_{p,r})}\lesssim
\left\|u_{0}\right\|_{\dot {B}^{\sigma}_{p,r}}+\mu^{\frac{1}{\rho_{2}}-1}\left\|f\right\|_{\tilde L^{\rho_{2}}_{T}(\dot {B}^{\sigma-2+\frac{2}{\rho_{2}}}_{p,r})}.
\end{equation}
\end{prop}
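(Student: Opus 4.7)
The plan is to prove Proposition 2.6 via a frequency-localized Duhamel argument, reducing the estimate to a pointwise-in-frequency decay bound for the heat semigroup on localized functions and then summing in the Besov norm.

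First, I would apply the dyadic block $\dot\Delta_j$ to the equation; since $\dot\Delta_j$ is a Fourier multiplier, it commutes with $\partial_t$ and $\Delta$, so $\dot\Delta_j u$ solves the same heat equation with data $\dot\Delta_j u_0$ and forcing $\dot\Delta_j f$. Writing Duhamel's formula,
\begin{equation*}
\dot\Delta_j u(t) = e^{\mu t\Delta}\dot\Delta_j u_0 + \int_0^t e^{\mu(t-s)\Delta}\dot\Delta_j f(s)\,ds.
\end{equation*}
The fundamental ingredient is the localized heat-kernel bound: because $\mathrm{Supp}\,\mathcal{F}(\dot\Delta_j v)$ lies in an annulus of size $\approx 2^j$, one has
\begin{equation*}
\bigl\|e^{\mu t\Delta}\dot\Delta_j v\bigr\|_{L^p} \lesssim e^{-c\mu 2^{2j}t}\|\dot\Delta_j v\|_{L^p}
\end{equation*}
for some absolute $c>0$, uniformly in $j\in\mathbb{Z}$. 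This is proved by writing $e^{\mu t\Delta}\dot\Delta_j v = g_{j,t}\star\dot\Delta_j v$ with $g_{j,t}$ a smooth function supported (in Fourier) in the same annulus, scaling out $2^j$, and checking that the rescaled kernel has $L^1$ norm controlled by $e^{-c\mu 2^{2j}t}$ (via the standard argument using $|\xi|^2\ge c$ on the annulus and derivative estimates).

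Next, I would take the $L^p$ norm in space and then the $L^{\rho_1}$ norm in time on $(0,T)$. For the initial-data term,
\begin{equation*}
\bigl\|e^{-c\mu 2^{2j}t}\bigr\|_{L^{\rho_1}(0,T)} \lesssim (\mu 2^{2j})^{-1/\rho_1}\|\dot\Delta_j u_0\|_{L^p}.
\end{equation*}
For the forcing term, apply Young's convolution inequality in time with exponents $1+\frac{1}{\rho_1}=\frac{1}{r}+\frac{1}{\rho_2}$, giving $\|e^{-c\mu 2^{2j}\cdot}\|_{L^r(\mathbb{R}_+)}\lesssim (\mu 2^{2j})^{-1/r}$, hence
\begin{equation*}
\|\dot\Delta_j u\|_{L^{\rho_1}_T(L^p)}\lesssim (\mu 2^{2j})^{-1/\rho_1}\|\dot\Delta_j u_0\|_{L^p} + (\mu 2^{2j})^{-1-1/\rho_1+1/\rho_2}\|\dot\Delta_j f\|_{L^{\rho_2}_T(L^p)}.
\end{equation*}

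Finally, multiply by $\mu^{1/\rho_1}2^{j(\sigma+2/\rho_1)}$ so that the powers of $2^{2j}$ on the right rearrange into $2^{j\sigma}$ for the data and $2^{j(\sigma-2+2/\rho_2)}$ for the forcing, while the $\mu$ powers combine into $\mu^0$ and $\mu^{1/\rho_2-1}$ respectively. Taking the $\ell^r(\mathbb{Z})$ norm in $j$ and invoking the definition of $\|\cdot\|_{\widetilde L^{\rho}_T(\dot B^\cdot_{p,r})}$ yields \eqref{Eq:2.9}. The main (only nontrivial) step is establishing the exponential-decay bound for the localized heat semigroup; the rest is bookkeeping with Young's inequality and the Chemin--Lerner norm definition.
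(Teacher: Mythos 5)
Your proof is correct and is precisely the standard argument: the paper does not actually prove Proposition~2.6 but recalls it from \cite{BCD}, where the proof is exactly this frequency-localized Duhamel formula combined with the exponential decay $\|e^{\mu t\Delta}\dot\Delta_j v\|_{L^p}\lesssim e^{-c\mu 2^{2j}t}\|\dot\Delta_j v\|_{L^p}$ on dyadic annuli, Young's inequality in time, and summation in $\ell^r(\Z)$. The only blemishes are cosmetic: the stray factor $\left\|\dot\Delta_j u_0\right\|_{L^p}$ on the right-hand side of your initial-data display (the left-hand side there is just the scalar norm $\|e^{-c\mu 2^{2j}t}\|_{L^{\rho_1}(0,T)}$), and the reuse of the letter $r$ both for the Young exponent $1+\frac{1}{\rho_1}=\frac{1}{r}+\frac{1}{\rho_2}$ and for the Besov summation index.
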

\begin{rem} \label{Rem2.2}
The solutions to the following \emph{Lam\'e system}
\begin{equation}\label{Eq:2.10}
\left\{\begin{array}{lll}\d_tu-\mu\Delta u-\left(\lambda+\mu\right)\nabla \div u=f,\\
u_{|t=0}=u_{0},
\end{array}
\right.
\end{equation}
where $\lambda$ and $\mu$ are constant coefficients such that $\mu>0$ and $\lambda+2\mu>0,$
also fulfill \eqref{Eq:2.9} (up to the dependence w.r.t. the viscosity). Indeed, if we denote by $\mathcal{P}\triangleq\mathrm{Id}-\nabla(-\Delta)^{-1}\mathrm{div}$
and $\mathcal{Q}\triangleq\mathrm{Id}-\mathcal{P}$ the orthogonal projectors over divergence-free
and potential vector fields, then we see both $\mathcal{P}u$ and $\mathcal{Q} u$ satisfy the heat equation,
as it can easily be observed by applying $\mathcal{P}$ and $\mathcal{Q}$ to \eqref{Eq:2.10}.
\end{rem}

\section{The proof of Time-decay estimates} \setcounter{equation}{0}
In this section, our central task is to prove Theorem \ref{Thm1.2} taking for granted the global-in-time existence result of Theorem \ref{Thm1.1}.
The proof is divided into three steps, according to the three terms of the time-weighted functional $\mathcal{D}_{p}(t)$ (see \eqref{Eq:1.11}). In what follows, we shall use frequently elementary inequalities for $0\leq \sigma_{1}\leq \sigma_{2}$ with $\sigma_{2}>1$:
\begin{equation}\label{Eq:3.1}
\int_{0}^{t}\langle t-\tau\rangle^{-\sigma_{1}}\tau^{-\theta}\langle\tau\rangle^{\theta-\sigma_{2}}d\tau\lesssim\langle t\rangle^{-\sigma_{1}} \ \ \hbox{if} \ \ 0\leq\theta<1.
\end{equation}
Let us keep in mind that the global solution $\left(a,u\right)$ given by Theorem \ref{Thm1.1} satisfies
\begin{equation}\label{Eq:3.2}
\left\|a\right\|_{\tilde{L}^{\infty}_{t}(\dot{B}^{\frac{d}{p}}_{p,1})}\leq c \ll 1 \ \ \hbox{for all} \ \ t\geq0.
\end{equation}
\subsection{First Step: Bounds for the Low Frequencies}
Set $\omega=\Lambda^{-1}\mathrm{div}\,u$, $\Omega=\Lambda^{-1}\mathrm{curl}\,u$ with $\Lambda^{s}z\triangleq\mathcal{F}^{-1}\left(\left|\xi\right|^{s}\mathcal{F}z\right)$ ($s\in\mathbb{R}$). Then system \eqref{Eq:1.7} becomes
\begin{equation*}
\left\{
\begin{array}{l}
\partial _{t}a +\Lambda \omega=f, \\
\partial _{t}\omega-\Delta \omega-\Lambda a-\Lambda^{-1}a=\Lambda^{-1}\mathrm{div}\,g, \\
\partial _{t}\Omega-\mu_{\infty}\Delta \Omega=\Lambda^{-1}\mathrm{curl}\,g, \\
u=-\Lambda^{-1}\nabla\omega+\Lambda^{-1}\mathrm{div}\,\Omega.
\end{array}
\right.
\end{equation*}
Observe that the incompressible component $\Omega$ satisfies a mere heat equation.
As pointed out in Introduction, at low frequencies, it
is natural to consider the following system
\begin{equation}\label{Eq:3.4}
\left\{
\begin{array}{l}
\partial _{t}\tilde{a} +\omega=\Lambda^{-1}f, \\
\partial _{t}\omega-\Delta \omega-(\Lambda^{2}+1)\tilde{a}=\Lambda^{-1}\mathrm{div}\,g,
\end{array}
\right.
\end{equation}
with $\tilde{a}=\Lambda^{-1}a$.

Let $(\tilde{E}(t))_{t\geq0}$ be the semi-group associated with the left-hand side of \eqref{Eq:3.4}.
From an explicit computation of the action of $\tilde{E}(t)$ in Fourier variables (see for example \cite{CD2}), one can conclude that for all $j_{0}\in\mathbb{Z}$, there exist positive constants $c_{0}$ and $C$ depending only on $j_{0}$ such that
\begin{equation}\label{Eq:3.5}
\left|\mathcal{F}\left(\tilde{E}(t)\tilde{U}\right)(\xi)\right|\leq Ce^{-c_{0}\left|\xi\right|^{2}t}\left|\mathcal{F}\tilde{U}(\xi)\right| \ \ \hbox{for all} \ \ |\xi|\leq 2^{j_{0}},
\end{equation}
where $\tilde{U}\triangleq \left(\tilde{a},\omega\right)$. Then it follows from \cite{DX} that
\begin{equation*}
\sup_{t\geq 0}\langle t\rangle^{\frac{s_{1}+s}{2}}\left\|\tilde{E}(t)\tilde{U}_{0}\right\|^{\ell}_{\dot{B}^{s}_{2,1}}\lesssim \left\|\tilde{U}_{0}\right\|^{\ell}_{\dot{B}^{-s_{1}}_{2,\infty}}, \ \ \hbox{if} \ \ s_{1}+s>0,
\end{equation*}
where we denote $\langle t\rangle\triangleq\sqrt{1+t^{2}}$ and $\tilde{U}_{0}\triangleq \left(\tilde{a}_{0},\omega_{0}\right)$. Of course, the incompressible component $\Omega$ also satisfies \eqref{Eq:3.5}.
Consequently, we end up with
\begin{equation}\label{Eq:3.6}
\left\|\left(\tilde{a},u\right)\right\|^{\ell}_{\dot{B}^{s}_{2,1}}\lesssim
\langle t\rangle^{-\frac{s_{1}+s}{2}}\left\|\left(\tilde{a}_{0},u_{0}\right)\right\|^{\ell}_{\dot{B}^{-s_{1}}_{2,\infty}}
+\int^{t}_{0}\langle t-\tau\rangle^{-\frac{s_{1}+s}{2}}\left\|\left(\Lambda^{-1}f,g\right)\right\|^{\ell}_{\dot{B}^{-s_{1}}_{2,\infty}}d\tau.
\end{equation}

Bounding the time-weighted integral on the right side of \eqref{Eq:3.6} is included in the following proposition.
\begin{prop}\label{Prop3.1}
If $p$ fulfills \eqref{Eq:1.5}, then it holds that for all $t\geq0$,
\begin{equation}\label{Eq:3.7}
\int^{t}_{0}\langle t-\tau\rangle^{-\frac{s_{1}+s}{2}}\left\|\left(\Lambda^{-1}f,g\right)\right\|^{\ell}_{\dot{B}^{-s_{1}}_{2,\infty}}d\tau
\lesssim \langle t\rangle^{-\frac{s_{1}+s}{2}}\left(\mathcal{D}^{2}_{p}(t)+\mathcal{E}^{2}_{p}(t)\right),
\end{equation}
provided that $-s_{1}<s\leq\frac{d}{2}+1$, where $\mathcal{E}_{p}(t)$ and $\mathcal{D}_{p}(t)$ have been defined in \eqref{Eq:1.6} and \eqref{Eq:1.11}, respectively.
\end{prop}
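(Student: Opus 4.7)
My plan is to bound $\|(\Lambda^{-1}f,g)\|^{\ell}_{\dot B^{-s_1}_{2,\infty}}$ by a product of quantities controlled by $\mathcal{D}_p$ and $\mathcal{E}_p$ with an integrable time weight, and then use the elementary integral inequality \eqref{Eq:3.1}. The first step is to expand $f$ and $g$ into their constituent nonlinear terms and, for each, to split each factor into low-frequency and high-frequency parts via $z=z^\ell+z^h$. This gives three ``types'' of bilinear pieces: low$\,\times\,$low, low$\,\times\,$high (or high$\,\times\,$low), and high$\,\times\,$high. The low$\,\times\,$low part will be estimated by the product rule of Proposition \ref{Prop2.2} placing both factors in $L^2$-based Besov spaces; because $\Lambda^{-1}$ improves one derivative at low frequencies, the quadratic term $\mathrm{div}(au)$ becomes harmless, and all other quadratic terms in $g$ reduce to bounds like $\|u\|_{\dot B^{d/2}_{2,1}}\|\nabla u\|_{\dot B^{s'}_{2,1}}$ or $\|a\|_{\dot B^{d/2}_{2,1}}\|\nabla a\|_{\dot B^{s'}_{2,1}}$, each of which is dominated by $\langle \tau\rangle^{-\sigma}\mathcal{D}_p^2$ for some $\sigma>1$ coming from the time weights in the definition \eqref{Eq:1.11}.

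For the high-frequency factors, the argument splits according to whether $2\leq p\leq d$ or $p>d$. In the non-oscillation case $2\leq p\leq d$ the embedding $\dot B^{d/p-1}_{p,1}\hookrightarrow \dot B^{d/2-1}_{2,1}$ still holds for the high modes, so a straightforward product rule (Proposition \ref{Prop2.2}) again handles $(\Lambda^{-1}f^{h},g^{h})$, with one factor in the norm controlled by the high-frequency part of $\mathcal{E}_p$ and the other by the time-weighted high-frequency term of $\mathcal{D}_p$. In the oscillation case $p>d$, such an embedding fails and one must call upon the non-classical product estimates \eqref{Eq:2.4}--\eqref{Eq:2.5} of Proposition \ref{Prop2.3}: they give the right $\dot B^{-s_0}_{2,\infty}$-low-frequency bound in terms of a $\dot B^{-\sigma}_{p,\infty}$-norm of a high-frequency factor, which is bounded by interpolation between the energy norm $\dot B^{d/p-1}_{p,1}$ and the decaying norm $\dot B^{d/p+1}_{p,1}$ (both appearing in $\mathcal{D}_p(t)$). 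Composition terms such as $I(a)\mathcal{A}u$, $k(a)\nabla a$, $\tilde\mu(a)D(u)/(1+a)$ are handled as products after using Proposition \ref{Prop2.4} on $I(a)$, $k(a)$, $\tilde\mu(a)$, $\tilde\lambda(a)$, $1/(1+a)$, together with the smallness \eqref{Eq:3.2}.

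Putting all the bilinear bounds together, one obtains, for every $\tau\in[0,t]$, a pointwise estimate of the form
\begin{equation*}
\left\|(\Lambda^{-1}f,g)\right\|^{\ell}_{\dot B^{-s_1}_{2,\infty}}(\tau)\lesssim \bigl(\mathcal{D}_p^2(t)+\mathcal{E}_p^2(t)\bigr)\,\tau^{-\theta}\langle\tau\rangle^{\theta-\sigma_2},
\end{equation*}
for appropriate parameters $0\le\theta<1$ and $\sigma_2>1$ depending on which bilinear piece is being estimated (the singularity $\tau^{-\theta}$ arises only from the high-frequency weight $\tau^\alpha$ in \eqref{Eq:1.11}, and is mild enough to be integrable near $0$). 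Plugging this into the integral and applying \eqref{Eq:3.1} with $\sigma_1=(s_1+s)/2$, which lies in $[0,1]$ precisely under the condition $-s_1<s\le d/2+1$ after we recall $s_1\le s_0$, yields the desired bound $\langle t\rangle^{-(s_1+s)/2}(\mathcal{D}_p^2+\mathcal{E}_p^2)$.

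The main obstacle I expect is bookkeeping for the high-frequency, $p>d$ case: we must choose the interpolation exponent $\sigma$ in Proposition \ref{Prop2.3} so that (i) the resulting $\dot B^{-\sigma}_{p,\infty}$ norm of the high-frequency factor interpolates cleanly between the two high-frequency controls in $\mathcal{D}_p(t)$, (ii) the time-decay rate produced by interpolation exceeds $1$ so that the integral over $[0,t]$ converges, and (iii) the decay exponent is at least $(s_1+s)/2$ so that the time-weighted integral inequality \eqref{Eq:3.1} actually applies. Verifying that the sharp choice $\alpha=s_1+d/2+1/2-\varepsilon$ made in the definition of $\mathcal{D}_p(t)$ is exactly what is needed for (i)--(iii) to hold simultaneously across all the nonlinear terms (in particular the worst one, $u\cdot\nabla u$ at high frequency, and the composition term $I(a)\mathcal{A}u$) is the delicate accounting at the heart of the argument.
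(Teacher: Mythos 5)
Your overall architecture (frequency-splitting of the bilinear terms, product laws from Proposition \ref{Prop2.2}, the non-classical estimates of Proposition \ref{Prop2.3} for $p>d$, then the convolution inequality \eqref{Eq:3.1}) is the same as the paper's, but two of your key justifications do not hold as stated. First, in the case $2\leq p\leq d$ you invoke the embedding $\dot{B}^{d/p-1}_{p,1}\hookrightarrow \dot{B}^{d/2-1}_{2,1}$ ``for the high modes''. This embedding is false for $p>2$: Besov (and Bernstein) embeddings only allow one to \emph{increase} the Lebesgue exponent, and restricting to high frequencies does not reverse this on $\R^{d}$. Consequently a ``straightforward product rule'' putting the high-frequency factor back into an $L^{2}$-based space is not available. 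What actually saves the day is a genuinely bilinear mechanism: the product of two functions in $L^{p}$-based spaces lands in an $L^{q}$-based space with $\frac1q=\frac1p+\frac1d\leq\frac12$ (this is where $p\leq 2d/(d-2)$ enters), and only \emph{then} can one embed into $\dot{B}^{-s_{0}}_{2,\infty}$ with a loss of derivatives that the negative index $-s_{1}\geq -s_{0}$ absorbs. This is precisely the paper's estimate \eqref{Eq:3.18}, derived from the last item of Proposition \ref{Prop2.2} together with $\dot{B}^{1-d/p}_{q,\infty}\hookrightarrow\dot{B}^{-s_{0}}_{2,\infty}$; without it your treatment of $(\Lambda^{-1}f^{h},g^{h})$ in the non-oscillating regime does not close.

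Second, your claimed uniform pointwise bound $\|(\Lambda^{-1}f,g)\|^{\ell}_{\dot{B}^{-s_{1}}_{2,\infty}}\lesssim(\mathcal{D}^{2}_{p}+\mathcal{E}^{2}_{p})\,\tau^{-\theta}\langle\tau\rangle^{\theta-\sigma_{2}}$ with $0\leq\theta<1$ cannot be achieved for the terms carrying the weight $\tau^{\alpha}$ (e.g.\ $u\cdot\nabla u^{h}$ and $g_{3}(a,u^{h})$, which involve $\|\nabla u^{h}\|_{\dot{B}^{d/p}_{p,1}}\lesssim\tau^{-\alpha}\mathcal{D}_{p}$): since $\alpha>1$, the singularity at $\tau=0$ is \emph{not} integrable, so \eqref{Eq:3.1} is inapplicable there. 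The paper instead splits the time integral at $\tau=1$, bounds the contribution of $[0,1]$ by $C\mathcal{E}^{2}_{p}(1)$ using the $L^{1}_{t}$ components of $\mathcal{E}_{p}$ (this is the actual reason $\mathcal{E}^{2}_{p}$ appears on the right of \eqref{Eq:3.7}), and only uses the time weights on $[1,t]$, where $\tau^{-\alpha}\approx\langle\tau\rangle^{-\alpha}$. A smaller point: \eqref{Eq:3.1} does not require $\sigma_{1}=\frac{s_{1}+s}{2}\in[0,1]$ (indeed $\frac{s_{1}+s}{2}$ can exceed $1$); what must be checked term by term is $0\leq\sigma_{1}\leq\sigma_{2}$ with $\sigma_{2}>1$, which is the bookkeeping occupying most of Lemmas \ref{Lem3.1}--\ref{Lem3.3}.
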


For clarity, we split the proof of Proposition \ref{Prop3.1} into several lemmas. It is suitable to decompose $\Lambda^{-1}f$ and $g$ in terms
of low-frequency and high-frequency as follows:
$$\Lambda^{-1}f=\Lambda^{-1}f^{\ell}+\Lambda^{-1}f^{h}$$
with
\begin{equation*}
\Lambda^{-1}f^{\ell}\triangleq-\Lambda^{-1}\mathrm{div}\,(au^{\ell}), \ \ \ \ \ \Lambda^{-1}f^{h}\triangleq-\Lambda^{-1}\mathrm{div}\,(au^{h})
\end{equation*}
and
$$g= g^{\ell}+g^{h}$$
with
\begin{eqnarray*}
&&g^{\ell}\triangleq-u\cdot \nabla u^{\ell}-k( a) \nabla a^{\ell}+g_{3}(a,u^{\ell})+g_{4}(a,u^{\ell}),\\
&&g^{h}\triangleq-u\cdot \nabla u^{h}-k(a) \nabla a^{h}+g_{3}(a,u^{h})+g_{4}(a,u^{h}),
\end{eqnarray*}
where
\begin{eqnarray*}
g_{3}(a,v)&=&\frac {1}{1+a}\left( 2\widetilde{\mu }(a)\,\mathrm{div}\,D(v)+
\widetilde{\lambda}(a)\,\nabla \mathrm{div}\,v\right) -I(a)\mathcal{A}v, \\
g_{4}(a,v)&=&\frac {1}{1+a}\left( 2\widetilde{\mu}'(a)\,\mathrm{div}\,D(v)\cdot\nabla a+\widetilde{\lambda}'(a)\,\mathrm{div}\,v\, \nabla
a\right)
\end{eqnarray*}
and
$$z^{\ell}\triangleq\sum_{j< j_{0}} \dot{\Delta}_{j}z,\ \  \ \ z^{h}\triangleq z-z^{\ell} \ \ \hbox{for} \ \ z=a,u.$$

\begin{lem}\label{Lem3.1} If $p$ satisfies \eqref{Eq:1.5}, then it holds that for all $t\geq0$,
\begin{equation}\label{Eq:3.8}
\int^{t}_{0}\langle t-\tau\rangle^{-\frac{s_{1}+s}{2}}\left\|\left(\Lambda^{-1}f^{\ell},g^{\ell}\right)\right\|^{\ell}_{\dot{B}^{-s_{1}}_{2,\infty}}d\tau
\lesssim \langle t\rangle^{-\frac{s_{1}+s}{2}}\left(\mathcal{D}^{2}_{p}(t)+\mathcal{E}^{2}_{p}(t)\right).
\end{equation}
\end{lem}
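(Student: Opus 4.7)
The plan is to bound each bilinear (or semilinear, through the compositions $I(a),k(a),\widetilde{\mu}(a),\widetilde{\lambda}(a)$) piece of $\Lambda^{-1}f^{\ell}$ and $g^{\ell}$ in the low-frequency norm $\|\cdot\|^{\ell}_{\dot B^{-s_1}_{2,\infty}}$ by a quantity of the form
$$ \tau^{-\theta}\langle\tau\rangle^{\theta-\sigma_2}\bigl(\mathcal D_p^2(t)+\mathcal E_p^2(t)\bigr), \quad 0\le\theta<1,\ \sigma_2>1,\ \sigma_2\ge\tfrac{s_1+s}{2}, $$
and then invoke the convolution inequality \eqref{Eq:3.1}, which delivers the claimed factor $\langle t\rangle^{-(s_1+s)/2}$. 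To extract the time decay from each product, I would invoke the decay built into $\mathcal D_p(t)$ for the low-frequency parts of $(\tilde a,u)$ and the uniform high-frequency bounds from $\mathcal E_p(t)$, after an $a=a^\ell+a^h$, $u=u^\ell+u^h$ splitting.

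\textbf{Treatment of $\Lambda^{-1}f^{\ell}$.} Because $\Lambda^{-1}\mathrm{div}$ is a zero-order Fourier multiplier, the task reduces to bounding $\|au^{\ell}\|^{\ell}_{\dot B^{-s_1}_{2,\infty}}$. I would split $a=a^\ell+a^h$. For $a^\ell u^\ell$, the third product estimate of Proposition \ref{Prop2.2} with $p_1=p_2=q=2$ and exponent $\sigma=s_1$ yields $\|a^\ell u^\ell\|_{\dot B^{-s_1}_{2,\infty}}\lesssim \|a^\ell\|_{\dot B^{s_1}_{2,1}}\|u^\ell\|_{\dot B^{-s_1}_{2,\infty}}$, and both factors decay at the rate read off from $\mathcal D_p(t)$ (one factor giving $\langle\tau\rangle^{-(s_1+s_1+1)/2}$ after shifting $\tilde a=\Lambda^{-1}a$, the other stays uniformly bounded). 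For $a^h u^\ell$, the non-classical Proposition \ref{Prop2.3} (together with Bernstein to convert the $L^p$ high-frequency information on $a$ into $L^2$ at the $\dot S_{j_0+N_0}$ scale) gives the right pairing without losing decay in the region $p>d$; in the range $2\le p\le d$, a direct use of Proposition \ref{Prop2.2} with a well chosen $q\le 2$ followed by Bernstein suffices.

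\textbf{Treatment of $g^{\ell}$.} The four families of terms are handled similarly. For the convection term $u\cdot\nabla u^{\ell}$, I would split $u=u^\ell+u^h$ and apply the same product estimates, noting that $\|\nabla u^\ell\|^{\ell}_{\dot B^{s}_{2,1}}$ inherits decay from $\mathcal D_p$ after a one-derivative shift, while $u^h$ is controlled through $\mathcal E_p$. The pressure-type term $k(a)\nabla a^{\ell}$, and the quasilinear pieces $g_3(a,u^{\ell})$, $g_4(a,u^{\ell})$, all reduce to products of $a$ (or $\nabla a$) with $\nabla u^{\ell}$, $\nabla a^{\ell}$ or a second derivative of $u^{\ell}$, after estimating the smooth compositions $I(a), k(a),\widetilde\mu(a),\widetilde\lambda(a)$ via Proposition \ref{Prop2.4} and the smallness \eqref{Eq:3.2}; the products are then bounded by the same combination of Propositions \ref{Prop2.2}--\ref{Prop2.3}.

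\textbf{Main obstacle.} The technical heart is producing time-decay at rate $\sigma_2>1$ for every one of these pieces while keeping $s_1$ anywhere in the full range $(1-\tfrac{d}{2},s_0]$ and while accommodating both the non-oscillation regime $2\le p\le d$ and the oscillation regime $p>d$. In the latter the regularity index $\tfrac{d}{p}-1$ of $u$ can be negative, so classical Proposition \ref{Prop2.2} alone is insufficient for $a^h u^\ell$ or $u^h\cdot\nabla u^\ell$ once $s_1$ is close to $1-\tfrac{d}{2}$; this is precisely the place where the non-classical product estimate \eqref{Eq:2.4}--\eqref{Eq:2.5} and the $L^{p^*}$-auxiliary control, combined with the small parameter $\varepsilon$ in the definition of $\mathcal D_p(t)$, are essential to close the book-keeping of exponents so that $\sigma_2>1$ survives.
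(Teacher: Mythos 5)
There is a genuine gap in your treatment of the core term $a^{\ell}u^{\ell}$, and it propagates to the whole book-keeping. First, the application of the third product law of Proposition \ref{Prop2.2} with $p_{1}=p_{2}=q=2$ and $\sigma=s_{1}$ is not legitimate: that law forces $\frac1q=\frac1{p_1}+\frac1{p_2}-\frac{\sigma}{d}$, so $q=2$ with $p_1=p_2=2$ pins $\sigma=\frac d2$, and it also requires $\sigma>0$, whereas $s_{1}$ ranges over the whole interval $(1-\frac d2,s_{0}]$ and may be zero or negative for $d\geq 2$. Second, and more seriously, even if an inequality of the form $\|a^{\ell}u^{\ell}\|_{\dot B^{-s_{1}}_{2,\infty}}\lesssim\|a^{\ell}\|_{\dot B^{s_{1}}_{2,1}}\|u^{\ell}\|_{\dot B^{-s_{1}}_{2,\infty}}$ were available, the decay it harvests is insufficient: the factor $\|u^{\ell}\|_{\dot B^{-s_{1}}_{2,\infty}}$ carries no decay (and is not even controlled by $\mathcal D_{p}$ or $\mathcal E_{p}$, since for \emph{low} frequencies the norm with index $\varepsilon-s_{1}$ does not dominate the one with index $-s_{1}$), while $\|a^{\ell}\|_{\dot B^{s_{1}}_{2,1}}=\|\tilde a^{\ell}\|_{\dot B^{s_{1}+1}_{2,1}}$ decays at best like $\langle\tau\rangle^{-(s_{1}+\frac12)}$. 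The resulting exponent $\sigma_{2}=s_{1}+\frac12$ is $\leq 1$ as soon as $s_{1}\leq\frac12$, so \eqref{Eq:3.1} cannot be invoked and the argument does not close on most of the admissible range of $s_{1}$. The paper avoids this by an asymmetric pairing: it proves $\|FG\|_{\dot B^{-s_{1}}_{2,\infty}}\lesssim\|F\|_{\dot B^{d/p}_{p,1}}\|G\|_{\dot B^{-s_{1}}_{2,1}}$ (inequality \eqref{Eq:3.9}) and applies it with $F=u^{\ell}$, which decays like $\langle\tau\rangle^{-\frac{s_{1}}{2}-\frac d4}$ by \eqref{Eq:3.11}, and $G=a^{\ell}$, which decays like $\langle\tau\rangle^{-\frac12}$ by \eqref{Eq:3.12}; the total exponent $\frac{s_{1}}{2}+\frac d4+\frac12$ exceeds $1$ precisely because $s_{1}>1-\frac d2$. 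You need this (or an equivalent) redistribution of regularity and decay between the two factors.

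A secondary remark: for this particular lemma the dichotomy $2\leq p\leq d$ versus $p>d$ and the non-classical Proposition \ref{Prop2.3} are not needed, because the second factor is always a low-frequency function ($u^{\ell}$, $a^{\ell}$). The term $a^{h}u^{\ell}$ is handled uniformly in $p$ by the companion product law \eqref{Eq:3.10} together with \eqref{Eq:3.13}--\eqref{Eq:3.14}; Proposition \ref{Prop2.3} only becomes necessary in the estimates of $(\Lambda^{-1}f^{h},g^{h})$ in Lemma \ref{Lem3.3}. Likewise, the splitting of the time integral at $\tau=1$ and the $\mathcal E_{p}^{2}$ contribution are not required here, since all weights in this lemma are of the form $\langle\tau\rangle^{-\sigma_{2}}$ with $\theta=0$ in \eqref{Eq:3.1}.
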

\begin{proof}
To deal with five terms of $\Lambda^{-1}f$ and $g$ with $a^{\ell}$ or $u^{\ell}$, we present two non classical product inequalities:
\begin{eqnarray}\label{Eq:3.9}
&&\left\|FG\right\|_{\dot{B}^{-s_{1}}_{2,\infty}}
\lesssim \left\|F\right\|_{\dot{B}^{\frac{d}{p}}_{p,1}}\left\|G\right\|_{\dot{B}^{-s_{1}}_{2,1}},\\
\label{Eq:3.10}
&&\left\|FG\right\|_{\dot{B}^{\frac{d}{p}-\frac{d}{2}-s_{1}}_{2,\infty}}
\lesssim \left\|F\right\|_{\dot{B}^{\frac{d}{p}-\frac{d}{2}-s_{1}}_{p,1}}\left\|G\right\|_{\dot{B}^{\frac{d}{p}}_{2,1}},
\end{eqnarray}
which are the direct consequences of Proposition \ref{Prop2.2} with the assumptions \eqref{Eq:1.5} and \eqref{Eq:1.8}.
The details are left to the interested reader.

On the other hand, we shall use frequently that, owing to the embedding properties, the definition of $\mathcal{D}_{p}(t)$ and the fact that $\alpha\triangleq s_{1}+\frac{d}{2}+\frac{1}{2}-\varepsilon\geq\frac{s_{1}}{2}+\frac{d}{4}+\frac{1}{2}$ for small enough $\varepsilon>0$,
\begin{equation}\label{Eq:3.11}
\sup_{\tau \in [0,t]}\langle\tau\rangle^{\frac{s_{1}}{2}+\frac{d}{4}+\frac{1}{2}} \left\|a(\tau)\right\|_{\dot{B}^{\frac{d}{p}}_{p,1}}
+\sup_{\tau \in [0,t]}\langle\tau\rangle^{\frac{s_{1}}{2}+\frac{d}{4}} \left\|u^{\ell}(\tau)\right\|_{\dot{B}^{\frac{d}{p}}_{p,1}}
\lesssim \mathcal{D}_{p}(t)
\end{equation}
and also that, owing to $-s_{1}-1<-s_{1}<1-s_{1}<\frac{d}{2}$ and $-s_{1}<2-s_{1}<\frac{d}{2}+1$,
\begin{equation}\label{Eq:3.12}
\left\|(a^{\ell},\nabla u^{\ell})(\tau)\right\|_{\dot{B}^{-s_{1}}_{2,1}}
\lesssim \langle\tau\rangle^{-\frac{1}{2}} \mathcal{D}_{p}(\tau), \ \ \
\left\|(\nabla a^{\ell},\nabla^{2}u^{\ell})(\tau)\right\|_{\dot{B}^{-s_{1}}_{2,1}}
\lesssim \langle\tau\rangle^{-1} \mathcal{D}_{p}(\tau).
\end{equation}
Observe that $-s_{1}-1<1-s_{1}<\frac{d}{2}$, $\frac{d}{p}-\frac{d}{2}-s_{1}<\frac{d}{p}-1$ and $\alpha>1$ for small $\varepsilon>0$, we have by embedding,
\begin{eqnarray}\label{Eq:3.13}
\left\|(\nabla a,a^{h},u^{h})(\tau)\right\|_{\dot{B}^{\frac{d}{p}-\frac{d}{2}-s_{1}}_{p,1}}
&\lesssim& \left\|\nabla a^{\ell}(\tau)\right\|_{\dot{B}^{-s_{1}}_{2,1}}
+\left\|(\nabla a^{h}, a^{h},u^{h})(\tau)\right\|_{\dot{B}^{\frac{d}{p}-1}_{p,1}}\nonumber\\
&\lesssim& \langle\tau\rangle^{-1} \mathcal{D}_{p}(\tau)
\end{eqnarray}
and thus, thanks to $-s_{1}<\frac{d}{2}-1\leq\frac{d}{p}<\frac{d}{p}+1\leq\frac{d}{2}+1$ for all $p\leq\frac{2d}{d-2}$,
\begin{equation}\label{Eq:3.14}
\sup_{\tau \in [0,t]}\langle\tau\rangle^{\frac{s_{1}}{2}+\frac{d}{2p}+\frac{1}{2}}\left\|\nabla u^{\ell} (\tau)\right\|_{\dot{B}^{\frac{d}{p}}_{2,1}}
+\sup_{\tau \in [0,t]}\langle\tau\rangle^{\frac{s_{1}}{2}+\frac{d}{2p}}\left\|u^{\ell} (\tau)\right\|_{\dot{B}^{\frac{d}{p}}_{2,1}}
\lesssim \mathcal{D}_{p}(t).
\end{equation}
Now, let us begin with prove \eqref{Eq:3.8}.
We decompose $\Lambda^{-1}\mathrm{div}\,(au^{\ell})=\Lambda^{-1}\mathrm{div}\,(a^{\ell}u^{\ell})+\Lambda^{-1}\mathrm{div}\,(a^{h}u^{\ell})$
and $u\cdot\nabla u^{\ell}=u^{\ell}\cdot\nabla u^{\ell}+u^{h}\cdot\nabla u^{\ell}$.
To bound the term with $\Lambda^{-1}\mathrm{div}\,(a^{\ell}u^{\ell})$, we note that, due to \eqref{Eq:3.9}, \eqref{Eq:3.11} and \eqref{Eq:3.12},
\begin{eqnarray*}
\int_{0}^{t}\langle t-\tau\rangle^{-\frac{s_{1}+s}{2}}\left\|\Lambda^{-1}\mathrm{div}\,(a^{\ell}u^{\ell})\right\|^{\ell}_{\dot{B}^{-s_{1}}_{2,\infty}}d\tau
&\lesssim& \int_{0}^{t}\langle t-\tau\rangle^{-\frac{s_{1}+s}{2}}\left\|a^{\ell}\right\|_{\dot{B}^{-s_{1}}_{2,1}}
\left\|u^{\ell}\right\|_{\dot{B}^{\frac{d}{p}}_{p,1}}d\tau\\
&\lesssim &\mathcal{D}^{2}_{p}(t)\int_{0}^{t}\langle t-\tau\rangle^{-\frac{s_{1}+s}{2}}
\langle \tau\rangle^{-\frac{s_{1}}{2}-\frac{d}{4}-\frac{1}{2}} d\tau.
\end{eqnarray*}
Owing to $\frac{s_{1}}{2}+\frac{d}{4}+\frac{1}{2}>1$ and $\frac{s_{1}}{2}+\frac{d}{4}+\frac{1}{2}\geq\frac{s_{1}+s}{2}$ for $s_{1}$ fulfilling \eqref{Eq:1.8} and $s\leq\frac{d}{2}+1$, inequality \eqref{Eq:3.1} ensures that
\begin{equation*}
\int_{0}^{t}\langle t-\tau\rangle^{-\frac{s_{1}+s}{2}}\left\|\Lambda^{-1}\mathrm{div}\,(a^{\ell}u^{\ell})\right\|^{\ell}_{\dot{B}^{-s_{1}}_{2,\infty}}d\tau
\lesssim \langle t\rangle^{-\frac{s_{1}+s}{2}}\mathcal{D}^{2}_{p}(t).
\end{equation*}
Bounding $u^{\ell}\cdot \nabla u^{\ell}$ essentially follows from the same procedure as $\Lambda^{-1}\mathrm{div}\,(a^{\ell}u^{\ell})$, we omit it.
To handle the term containing $\Lambda^{-1}\mathrm{div}\left(a^{h}u^{\ell}\right)$, we write that, thanks to \eqref{Eq:3.10},
\begin{equation*}
\int_{0}^{t}\langle t-\tau\rangle^{-\frac{s_{1}+s}{2}}\left\|\Lambda^{-1}\mathrm{div}(a^{h}u^{\ell})\right\|^{\ell}_{\dot{B}^{\frac{d}{p}-\frac{d}{2}-s_{1}}_{2,\infty}}d\tau
\lesssim \int_{0}^{t}\langle t-\tau\rangle^{-\frac{s_{1}+s}{2}}\left\|a^{h}\right\|_{\dot{B}^{\frac{d}{p}-\frac{d}{2}-s_{1}}_{p,1}}
\left\|u^{\ell}\right\|_{\dot{B}^{\frac{d}{p}}_{2,1}}d\tau.
\end{equation*}
Observe that, as $s_{1}\leq s_{1}+\frac{d}{2}-\frac{d}{p}$ (for $p\geq2$), we get
\begin{equation*}
\left\|\Lambda^{-1}\mathrm{div}\,(a^{h}u^{\ell})\right\|^{\ell}_{\dot {B}^{-s_{1}}_{2,\infty}}
\lesssim \left\|\Lambda^{-1}\mathrm{div}\,(a^{h}u^{\ell})\right\|^{\ell}_{\dot {B}^{\frac{d}{p}-\frac{d}{2}-s_{1}}_{2,\infty}}.
\end{equation*}
Using the relations $\frac{s_{1}}{2}+\frac{d}{2p}+1 \geq \frac{s_{1}}{2}+\frac{d}{4}+\frac{1}{2}>1
$ and $\frac{s_{1}}{2}+\frac{d}{2p}+1\geq\frac{s_{1}+s}{2}$ for all $s\leq\frac{d}{2}+1$ and $p\leq\frac{2d}{d-2}$
as well as \eqref{Eq:3.13}, \eqref{Eq:3.14} and \eqref{Eq:3.1}, we deduce that
\begin{eqnarray*}
\int_{0}^{t}\langle t-\tau\rangle^{-\frac{s_{1}+s}{2}}\left\|\Lambda^{-1}\mathrm{div}\,(a^{h}u^{\ell})\right\|^{\ell}_{\dot{B}^{-s_{1}}_{2,\infty}}d\tau
&\lesssim &\mathcal{D}^{2}_{p}(t)\int_{0}^{t}\langle t-\tau\rangle^{-\frac{s_{1}+s}{2}}
\langle \tau\rangle^{-\frac{s_{1}}{2}-\frac{d}{2p}-1} d\tau\\
&\lesssim& \langle t\rangle^{-\frac{s_{1}+s}{2}}\mathcal{D}^{2}_{p}(t).
\end{eqnarray*}
In light of the relation $s_{1}\leq s_{1}+\frac{d}{2}-\frac{d}{p}$ (for $p\geq2$) and using \eqref{Eq:3.10}, \eqref{Eq:3.13}, \eqref{Eq:3.14} and \eqref{Eq:3.1}, we end up with
\begin{eqnarray*}
\int_{0}^{t}\langle t-\tau\rangle^{-\frac{s_{1}+s}{2}}\left\|u^{h}\cdot\nabla u^{\ell}\right\|^{\ell}_{\dot {B}^{-s_{1}}_{2,\infty}}d\tau
&\lesssim& \int_{0}^{t}\langle t-\tau\rangle^{-\frac{s_{1}+s}{2}}\left\|u^{h}\right\|_{\dot {B}^{\frac{d}{p}-\frac{d}{2}-s_{1}}_{p,1}}
\left\|\nabla u^{\ell}\right\|_{\dot {B}^{\frac {d}{p}}_{2,1}}d\tau\\
&\lesssim& \mathcal{D}^{2}_{p}(t)\int_{0}^{t}\langle t-\tau\rangle^{-\frac{s_{1}+s}{2}}
\langle\tau\rangle^{-\frac{s_{1}}{2}-\frac{d}{2p}-\frac{3}{2}}d\tau\\
& \lesssim &\langle t\rangle^{-\frac{s_{1}+s}{2}} \mathcal{D}^{2}_{p}(t).
\end{eqnarray*}
For the term with $k(a)\nabla a^{\ell}$, we take advantage of Proposition \ref{Prop2.4}, \eqref{Eq:3.9}, \eqref{Eq:3.11}, \eqref{Eq:3.12} and \eqref{Eq:3.1}, and similarly get
\begin{eqnarray*}
\int_{0}^{t}\langle t-\tau\rangle^{-\frac{s_{1}+s}{2}}\left\|k(a)\nabla a^{\ell}\right\|^{\ell}_{\dot{B}^{-s_{1}}_{2,\infty}}d\tau
&\lesssim & \int_{0}^{t}\langle t-\tau\rangle^{-\frac{s_{1}+s}{2}}\left\|a\right\|_{\dot{B}^{\frac{d}{p}}_{p,1}}
\left\|\nabla a^{\ell}\right\|_{\dot{B}^{-s_{1}}_{2,1}}d\tau\\
&\lesssim& \mathcal{D}^{2}_{p}(t)\int_{0}^{t}\langle t-\tau\rangle^{-\frac{s_{1}+s}{2}}
\langle \tau\rangle^{-\frac{s_{1}}{2}-\frac{d}{4}-\frac{3}{2}}d\tau\\
&\lesssim& \langle t\rangle^{-\frac{s_{1}+s}{2}} \mathcal{D}^{2}_{p}(t).
\end{eqnarray*}
Let us next look at the term with $g_{3}(a,u^{\ell})$, it follows from Proposition \ref{Prop2.4}, \eqref{Eq:3.9}, \eqref{Eq:3.11}, \eqref{Eq:3.12} and \eqref{Eq:3.1} that
\begin{eqnarray*}
\int_{0}^{t}\langle t-\tau\rangle^{-\frac{s_{1}+s}{2}}\left\|g_{3}(a,u^{\ell})\right\|^{\ell}_{\dot{B}^{-s_{1}}_{2,\infty}}d\tau
&\lesssim & \int_{0}^{t}\langle t-\tau\rangle^{-\frac{s_{1}+s}{2}}\left\|a\right\|_{\dot{B}^{\frac{d}{p}}_{p,1}}
\left\|\nabla^{2} u^{\ell}\right\|_{\dot{B}^{-s_{1}}_{2,1}}d\tau\\
&\lesssim& \mathcal{D}^{2}_{p}(t)\int_{0}^{t}\langle t-\tau\rangle^{-\frac{s_{1}+s}{2}}\langle \tau\rangle^{-\frac{s_{1}}{2}-\frac{d}{4}-\frac{3}{2}}d\tau\\
&\lesssim& \langle t\rangle^{-\frac{s_{1}+s}{2}} \mathcal{D}^{2}_{p}(t).
\end{eqnarray*}
Let us finally bound the term involving $g_{4}(a,u^{\ell})$. Keeping in mind that $s_{1}\leq s_{1}+\frac{d}{2}-\frac{d}{p}$ (for $p\geq2$) and using \eqref{Eq:3.10}, Proposition \ref{Prop2.4} and \eqref{Eq:3.2}, we arrive at
\begin{equation*}
\int_{0}^{t}\langle t-\tau\rangle^{-\frac{s_{1}+s}{2}}\left\|g_{4}(a,u^{\ell})\right\|^{\ell}_{\dot{B}^{-s_{1}}_{2,\infty}}d\tau
\lesssim \int_{0}^{t}\langle t-\tau\rangle^{-\frac{s_{1}+s}{2}}
\left\|\nabla a\right\|_{\dot{B}^{\frac{d}{p}-\frac{d}{2}-s_{1}}_{p,1}}
\left\|\nabla u^{\ell}\right\|_{\dot{B}^{\frac{d}{p}}_{2,1}}d\tau,
\end{equation*}
where the relation $\frac{d}{p}-\frac{d}{2}-s_{1}+1>1-\frac{d}{p}>-\frac{d}{p}$ implies that $\frac{d}{p}-\frac{d}{2}-s_{1}+1$
satisfies the regularity requirement of Proposition \ref{Prop2.4}. With the aid of \eqref{Eq:3.13}-\eqref{Eq:3.14} and \eqref{Eq:3.1}, we conclude that
\begin{eqnarray*}
\int_{0}^{t}\langle t-\tau\rangle^{-\frac{s_{1}+s}{2}}\left\|g_{4}(a,u^{\ell})\right\|^{\ell}_{\dot{B}^{-s_{1}}_{2,\infty}}d\tau
&\lesssim& \mathcal{D}^{2}_{p}(t)\int_{0}^{t}\langle t-\tau\rangle^{-\frac{s_{1}+s}{2}}
\langle\tau\rangle^{-\frac{s_{1}}{2}-\frac{d}{2p}-\frac{3}{2}}d\tau\\
&\lesssim &\langle t\rangle^{-\frac{s_{1}+s}{2}}\mathcal{D}^{2}_{p}(t).
\end{eqnarray*}
Therefore, putting all estimates together leads to \eqref{Eq:3.8}.
\end{proof}
To bound nonlinear terms in $\Lambda^{-1}f^{h}$ and $g^{h}$, precisely,
$$\Lambda^{-1}\mathrm{div}\,(au^{h}),\ \ u\cdot\nabla u^{h}, \ \ k(a)\nabla a^{h}, \ \ g_{3}(a,u^{h}) \ \ \hbox{and} \ \ g_{4}(a,u^{h}),$$
we shall proceed these calculations differently depending on whether $2\leq p\leq d$ or $p>d$. We claim that
\begin{equation}\label{Eq:3.15}
\left\|a(\tau)\right\|_{\dot{B}^{\frac{d}{p}-1}_{p,1}} \lesssim \langle\tau\rangle^{-\frac{s_{1}}{2}-\frac{d}{4}}\mathcal{D}_{p}(\tau), \ \ \ \
\left\|u(\tau)\right\|_{\dot{B}^{\frac{d}{p}-1}_{p,1}} \lesssim \langle\tau\rangle^{-\frac{s_{1}}{2}-\frac{d}{4}+\frac{1}{2}}\mathcal{D}_{p}(\tau)
\end{equation}
and also that
\begin{equation}\label{Eq:3.16}
\left\|(\nabla a^{h},u^{h})(\tau)\right\|_{\dot{B}^{\frac{d}{p}-1}_{p,1}}
\lesssim \langle\tau\rangle^{-\alpha}\mathcal{D}_{p}(\tau), \ \ \left\|( u^{h},\nabla u^{h})(\tau)\right\|_{\dot{B}^{\frac{d}{p}}_{p,1}}
\lesssim \tau^{-\alpha}\mathcal{D}_{p}(\tau)
\end{equation}
for all $\tau\geq0$.

Indeed, as for \eqref{Eq:3.15}, owing to the relations $-s_{1}-1<-s_{1}<\frac{d}{2}-1<\frac{d}{2}$ and $\alpha \geq \frac{s_{1}}{2}+\frac{d}{4}$ for small enough $\varepsilon>0$, the embedding and the definition of $\mathcal{D}_{p}(t)$, we obtain
\begin{eqnarray*}
\left\|a(\tau)\right\|_{\dot{B}^{\frac{d}{p}-1}_{p,1}} &\lesssim& \langle\tau\rangle^{-\frac{s_{1}}{2}-\frac{d}{4}}\left(\langle\tau\rangle^{\frac{s_{1}}{2}+\frac{d}{4}}\left\|a(\tau)\right\|^{\ell}_{\dot{B}^{\frac{d}{2}-1}_{2,1}}
+\langle\tau\rangle^{\alpha}\left\|a(\tau)\right\|^{h}_{\dot{B}^{\frac{d}{p}}_{p,1}}\right)\\
&\lesssim& \langle\tau\rangle^{-\frac{s_{1}}{2}-\frac{d}{4}}\mathcal{D}_{p}(\tau)
\end{eqnarray*}
and
\begin{eqnarray*}
\left\|u(\tau)\right\|_{\dot{B}^{\frac{d}{p}-1}_{p,1}} &\lesssim& \langle\tau\rangle^{-\frac{s_{1}}{2}-\frac{d}{4}+\frac{1}{2}}\left(\langle\tau\rangle^{\frac{s_{1}}{2}+\frac{d}{4}-\frac{1}{2}}\left\|u(\tau)\right\|^{\ell}_{\dot{B}^{\frac{d}{2}-1}_{2,1}}
+\langle\tau\rangle^{\alpha}\left\|u(\tau)\right\|^{h}_{\dot{B}^{\frac{d}{p}-1}_{p,1}}\right)\\
&\lesssim& \langle\tau\rangle^{-\frac{s_{1}}{2}-\frac{d}{4}+\frac{1}{2}}\mathcal{D}_{p}(\tau).
\end{eqnarray*}
The inequality \eqref{Eq:3.16} is easily followed by the definition of $\mathcal{D}_{p}(t)$.
\begin{lem}\label{Lem3.2} If $p$ satisfies \eqref{Eq:1.5} and $2\leq p\leq d$, then it holds that for all $t\geq0$,
\begin{equation}\label{Eq:3.17}
\int^{t}_{0}\langle t-\tau\rangle^{-\frac{s_{1}+s}{2}}\left\|\left(\Lambda^{-1}f^{h},g^{h}\right)\right\|^{\ell}_{\dot{B}^{-s_{1}}_{2,\infty}}d\tau
\lesssim \langle t\rangle^{-\frac{s_{1}+s}{2}}\left(\mathcal{D}^{2}_{p}(t)+\mathcal{E}^{2}_{p}(t)\right).
\end{equation}
\end{lem}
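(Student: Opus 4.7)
The plan is to follow the blueprint of Lemma \ref{Lem3.1}, but since the five nonlinearities $\Lambda^{-1}\mathrm{div}(au^h)$, $u\cdot \nabla u^h$, $k(a)\nabla a^h$, $g_3(a,u^h)$, and $g_4(a,u^h)$ all now carry a high-frequency factor in the second slot, the classical product rules of Proposition \ref{Prop2.2} do not by themselves cover the full range $s_1 \in (1-d/2, s_0]$. The first step is therefore to establish a new non-classical product estimate, referred to in the introduction as \eqref{Eq:3.18}, of the schematic form
\begin{equation*}
\|FG\|^{\ell}_{\dot{B}^{-s_{1}}_{2,\infty}} \lesssim \|F\|_{\dot{B}^{\frac{d}{p}}_{p,1}} \|G\|_{\dot{B}^{-s_{1}+\frac{d}{2}-\frac{d}{p}}_{p,1}},
\end{equation*}
together with close variants where the regularity and integrability are distributed differently. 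This will follow from Bony's paraproduct decomposition, Bernstein's inequality localised on the low-frequency block $\{j\leq j_0\}$, and the Sobolev embedding $\dot{B}^{\sigma}_{p,1}\hookrightarrow \dot{B}^{\sigma-d/p+d/2}_{2,1}$, which provides just enough room in the non-oscillation regime $2\leq p\leq d$ where the relation $\frac{1}{p}+\frac{1}{p}\leq 1$ remains available.

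With the estimate in hand, each nonlinearity is treated in turn. For $\Lambda^{-1}\mathrm{div}(au^h)$, we apply the estimate with $F=a$, $G=u^h$, use \eqref{Eq:3.11} for $a$ and the high-frequency bound $\|u^h\|_{\dot{B}^{d/p-1}_{p,1}}\lesssim \langle\tau\rangle^{-\alpha}\mathcal{D}_p(\tau)$ from \eqref{Eq:3.16}, and integrate via \eqref{Eq:3.1}. The convection term $u\cdot \nabla u^h$ splits as $u^\ell\cdot \nabla u^h + u^h\cdot \nabla u^h$, the first piece leaning on \eqref{Eq:3.14} and the second on \eqref{Eq:3.16}. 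For $k(a)\nabla a^h$, we compose with Proposition \ref{Prop2.4} to treat $k(a)$ and apply the product estimate against $\nabla a^h$, whose decay is again controlled by \eqref{Eq:3.16}. Finally, $g_3(a,u^h)$ and $g_4(a,u^h)$ follow the same pattern, expanding their smooth nonlinear prefactors $I(a)$, $\widetilde{\mu}(a)$, $\widetilde{\lambda}(a)$ via Proposition \ref{Prop2.4} and invoking the product estimate against $\nabla^2 u^h$, $\nabla u^h$, or $\nabla a$ as appropriate.

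The main obstacle will be arranging, in each of the five cases, that the overall temporal decay rate of the integrand strictly exceeds $1$ (so that \eqref{Eq:3.1} applies with an integrable tail) \emph{and} is at least $\frac{s_1+s}{2}$ across the whole range $s\in(-s_1,\frac{d}{2}+1]$ and $s_1 \in (1-d/2, s_0]$. The borderline situations occur near $s=\frac{d}{2}+1$ together with $s_1$ near $1-d/2$; here the choice of $\alpha = s_1+\frac{d}{2}+\frac{1}{2}-\varepsilon$, as close as possible to the optimal value $s_1+\frac{d}{2}+\frac{1}{2}$, provides exactly the narrow margin that keeps the time integral convergent. Verifying this quantitatively in each of the five cases, and checking that the second factor's required regularity $-s_{1}+\frac{d}{2}-\frac{d}{p}$ lies within the envelope controlled by $\mathcal{D}_p$ (via interpolation between the $\dot{B}^{d/p-1}_{p,1}$ and $\dot{B}^{d/p}_{p,1}$ high-frequency norms) is the bulk of the remaining bookkeeping.
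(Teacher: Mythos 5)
Your overall architecture (a new product estimate followed by term-by-term decay bounds fed into \eqref{Eq:3.1}) matches the paper's, but two of your concrete steps fail. First, the product estimate you propose,
$\|FG\|^{\ell}_{\dot{B}^{-s_{1}}_{2,\infty}}\lesssim\|F\|_{\dot{B}^{d/p}_{p,1}}\|G\|_{\dot{B}^{-s_{1}+d/2-d/p}_{p,1}}$,
cannot be proved the way you describe: the embedding $\dot{B}^{\sigma}_{p,1}\hookrightarrow\dot{B}^{\sigma-d/p+d/2}_{2,1}$ you invoke is written backwards --- Proposition \ref{Prop2.1} only allows passing from $L^{p_1}$-based to $L^{p_2}$-based spaces when $p_{1}\le p_{2}$, so for $p>2$ there is no way to land in an $L^{2}$-based space from an $L^{p}$-based one. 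Concretely, a product law with the first factor at regularity $\frac dp$ produces an output with Lebesgue exponent $q=p>2$, which cannot be pushed down to $2$. The paper's inequality \eqref{Eq:3.18} is instead
\begin{equation*}
\|FG^{h}\|^{\ell}_{\dot{B}^{-s_{1}}_{2,\infty}}\lesssim\|FG^{h}\|^{\ell}_{\dot{B}^{-s_{0}}_{2,\infty}}\lesssim\|F\|_{\dot{B}^{\frac dp-1}_{p,1}}\|G^{h}\|_{\dot{B}^{\frac dp-1}_{p,1}},
\end{equation*}
where one \emph{first} lowers $-s_{1}$ to $-s_{0}$ on the low-frequency block (using $s_{1}\le s_{0}$ and $j\le j_{0}$), and then the choice $\sigma=\frac dp-1$ in the last item of Proposition \ref{Prop2.2} gives $\frac1q=\frac1p+\frac1d\ge\frac12$ precisely because $p\le\frac{2d}{d-2}$, so that $\dot{B}^{1-\frac dp}_{q,\infty}\hookrightarrow\dot{B}^{-s_{0}}_{2,\infty}$; the positivity requirement $\frac dp-1>0$ is where $p<d$ enters, with the endpoint $p=d$ handled separately via H\"older and $L^{p/2}\hookrightarrow\dot{B}^{-s_{0}}_{2,\infty}$. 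Your shifted regularity $-s_{1}+\frac d2-\frac dp$ on the second factor is exactly what the $-s_{1}\to-s_{0}$ reduction is designed to avoid, and no Bony decomposition from scratch is needed.

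Second, you do not account for the $\mathcal{E}^{2}_{p}(t)$ term on the right-hand side of \eqref{Eq:3.17}. For $u\cdot\nabla u^{h}$, $g_{3}(a,u^{h})$ and $g_{4}(a,u^{h})$ the second factor is controlled only through $\|(u^{h},\nabla u^{h})\|_{\dot{B}^{d/p}_{p,1}}\lesssim\tau^{-\alpha}\mathcal{D}_{p}(\tau)$ with a \emph{bare} power of $\tau$, and since $\alpha>1$ this is not integrable near $\tau=0$; inequality \eqref{Eq:3.1} requires the origin singularity exponent $\theta$ to satisfy $\theta<1$ and therefore does not apply on all of $[0,t]$. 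The paper splits each such integral into the part on $[0,1]$, bounded by $\mathcal{E}^{2}_{p}(1)$ using the global-existence norms of Theorem \ref{Thm1.1}, and the part on $[1,t]$, where the time-weighted bounds take over. You only discuss the tail behaviour at infinity; without the splitting at $\tau=1$ the argument breaks down for three of the five terms.
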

\begin{proof}
In order to prove \eqref{Eq:3.17}, we develop the following inequality
\begin{equation}\label{Eq:3.18}
\|FG^{h}\|^{\ell}_{\dot{B}^{-s_{1}}_{2,\infty}}\lesssim \|FG^{h}\|^{\ell}_{\dot{B}^{-s_{0}}_{2,\infty}}
\lesssim \|F\|_{\dot{B}^{\frac{d}{p}-1}_{p,1}}
\|G^{h}\|_{\dot{B}^{\frac{d}{p}-1}_{p,1}}
\end{equation}
for $1-\frac{d}{2}<s_{1}\leq s_{0}$ and $2\leq p\leq d$.

Indeed, if $2\leq p<d$, then $\frac{d}{p}-1>0$. It follows from the last item of Proposition \ref{Prop2.2} with $p_{1}=p_{2}=p$, $\sigma=\frac{d}{p}-1$ and $\frac 1q=\frac 1p+\frac 1d$, that
$$\|FG^{h}\|_{\dot{B}^{1-\frac{d}{p}}_{q,\infty}}\lesssim \|F\|_{\dot{B}^{\frac{d}{p}-1}_{p,1}}\|G^{h}\|_{\dot{B}^{1-\frac{d}{p}}_{p,\infty}}.$$
The fact $p\leq \frac {2d}{d-2}$ implies that $q\leq2$. Hence, \eqref{Eq:3.18} directly stems from
$\dot{B}^{1-\frac{d}{p}}_{q,\infty}\hookrightarrow\dot{B}^{-s_{0}}_{2,\infty}$ and $1-\frac{d}{p}<\frac{d}{p}-1$. If $p=d$, owing to the embeddings $L^{\frac{p}{2}}\hookrightarrow \dot{B}^{-s_{0}}_{2,\infty}$, $\dot{B}^{0}_{p,1}\hookrightarrow L^{p}$ and H\"{o}lder inequality, one has
\begin{equation*}
\|FG^{h}\|^{\ell}_{\dot{B}^{-s_{1}}_{2,\infty}}
\lesssim \|FG^{h}\|^{\ell}_{\dot{B}^{-s_{0}}_{2,\infty}}
\lesssim \|FG^{h}\|_{L^{\frac{p}{2}}}\lesssim
\|F\|_{L^{p}} \|G^{h}\|_{L^{p}}
\lesssim\|F\|_{\dot{B}^{0}_{p,1}} \|G^{h}\|_{\dot{B}^{0}_{p,1}}.
\end{equation*}
Now, we begin to estimates those terms in $\Lambda^{-1}f^{h}$ and $g^{h}$. For the term with $\Lambda^{-1}\mathrm{div}\,(a\,u^{h})$, we use \eqref{Eq:3.18}, \eqref{Eq:3.15} and \eqref{Eq:3.16}, and get
\begin{eqnarray*}
\int_{0}^{t}\langle t-\tau\rangle^{-\frac{s_{1}+s}{2}}\left\|\Lambda^{-1}\mathrm{div}\,(au^{h})\right\|^{\ell}_{\dot{B}^{-s_{1}}_{2,\infty}}d\tau
&\lesssim& \int_{0}^{t}\langle t-\tau\rangle^{-\frac{s_{1}+s}{2}}\left\|a\right\|_{\dot{B}^{\frac{d}{p}-1}_{p,1}}
\left\|u^{h}\right\|_{\dot{B}^{\frac{d}{p}-1}_{p,1}}d\tau \\
&\lesssim &\mathcal{D}^{2}_{p}(t)\int_{0}^{t}\langle t-\tau\rangle^{-\frac{s_{1}+s}{2}}\langle\tau\rangle^{-\frac{s_{1}}{2}-\frac{d}{4}-\alpha} d\tau.
\end{eqnarray*}
Thanks to $\frac{s_{1}}{2}+\frac{d}{4}+\alpha>1$ and $\frac{s_{1}}{2}+\frac{d}{4}+\alpha\geq\frac{s_{1}+s}{2}$
for small enough $\varepsilon>0$ and $s\leq\frac{d}{2}+1$, inequality \eqref{Eq:3.1} implies that
\begin{equation*}
\int_{0}^{t}\langle t-\tau\rangle^{-\frac{s_{1}+s}{2}}\left\|\Lambda^{-1}\mathrm{div}\,(au^{h})\right\|^{\ell}_{\dot{B}^{-s_{1}}_{2,\infty}}d\tau
\lesssim \langle t\rangle^{-\frac{s_{1}+s}{2}}\mathcal{D}^{2}_{p}(t).
\end{equation*}
For the term with $k(a)\nabla a^{h}$, we use \eqref{Eq:3.18}, \eqref{Eq:3.2} and Proposition \ref{Prop2.4} with $\frac{d}{p}-1>-\frac{d}{p}$, and get
\begin{equation}\label{Eq:3.19}
\left\|k(a)\nabla a^{h}\right\|^{\ell}_{\dot{B}^{-s_{1}}_{2,\infty}}
\lesssim\left\|k(a)\right\|_{\dot{B}^{\frac{d}{p}-1}_{p,1}}\left\|\nabla a^{h}\right\|_{\dot{B}^{\frac{d}{p}-1}_{p,1}}
\lesssim\left\|a\right\|_{\dot{B}^{\frac{d}{p}-1}_{p,1}}\left\|\nabla a^{h}\right\|_{\dot{B}^{\frac{d}{p}-1}_{p,1}}.
\end{equation}
Then we use \eqref{Eq:3.15}, \eqref{Eq:3.16} and \eqref{Eq:3.1} to conclude that we end up with
\begin{eqnarray*}
\int_{0}^{t}\langle t-\tau\rangle^{-\frac{s_{1}+s}{2}}\left\|k(a)\nabla a^{h}\right\|^{\ell}_{\dot{B}^{-s_{1}}_{2,\infty}}d\tau
&\lesssim&\int_{0}^{t}\langle t-\tau\rangle^{-\frac{s_{1}+s}{2}}
\left\|a\right\|_{\dot{B}^{\frac{d}{p}-1}_{p,1}}
\left\|\nabla a^{h}\right\|_{\dot{B}^{\frac{d}{p}-1}_{p,1}}d\tau\\
&\lesssim& \mathcal{D}^{2}_{p}(t)\int_{0}^{t}\langle t-\tau\rangle^{-\frac{s_{1}+s}{2}}
\langle\tau\rangle^{-\frac{s_{1}}{2}-\frac{d}{4}-\alpha} d\tau\\
&\lesssim &\langle t\rangle^{-\frac{s_{1}+s}{2}} \mathcal{D}^{2}_{p}(t).
\end{eqnarray*}
To bound the term involving $u\cdot\nabla u^{h}$, we write that, thanks to \eqref{Eq:3.18},
\begin{eqnarray*}
\int_{0}^{t}\langle t-\tau\rangle^{-\frac{s_{1}+s}{2}}
\left\|u\cdot\nabla u^{h}\right\|^{\ell}_{\dot{B}^{-s_{1}}_{2,\infty}}d\tau
&\lesssim& \int_{0}^{t}\langle t-\tau\rangle^{-\frac{s_{1}+s}{2}}
\left\|u\right\|_{\dot{B}^{\frac{d}{p}-1}_{p,1}}
\left\| u^{h}\right\|_{\dot{B}^{\frac{d}{p}}_{p,1}} d\tau\\
&=& \left(\int_{0}^{1}+\int_{1}^{t}\right)\left(\cdots\right)d\tau\triangleq I_{1}+I_{2}.
\end{eqnarray*}
It is clear that $I_{1}\lesssim \langle t\rangle^{-\frac{s_{1}+s}{2}} \mathcal{E}^{2}_{p}(1)$ and that, owing to \eqref{Eq:3.15}, \eqref{Eq:3.16} and \eqref{Eq:3.1}, if $t\geq1$,
\begin{eqnarray*}
I_{2}&\lesssim & \int_{1}^{t}\langle t-\tau\rangle^{-\frac{s_{1}+s}{2}}
\langle\tau\rangle^{-\frac{s_{1}}{2}-\frac{d}{4}+\frac{1}{2}-\alpha}
\left(\langle\tau\rangle^{\frac{s_{1}}{2}+\frac{d}{4}-\frac{1}{2}}\left\|u\right\|_{\dot{B}^{\frac{d}{p}-1}_{p,1}}\right)
\left(\tau ^{\alpha}\left\| u^{h}\right\|_{\dot{B}^{\frac{d}{p}}_{p,1}}\right)d\tau\\
&\lesssim& \left(\sup_{\tau \in [1,t]}\langle\tau\rangle^{\frac{s_{1}}{2}+\frac{d}{4}-\frac{1}{2}}\left\|u\right\|_{\dot{B}^{\frac{d}{p}-1}_{p,1}}\right)
\left(\sup_{\tau \in [1,t]}\tau ^{\alpha}\left\|u^{h}\right\|_{\dot{B}^{\frac{d}{p}}_{p,1}}\right)\\
&&\times\int_{1}^{t}\langle t-\tau\rangle^{-\frac{s_{1}+s}{2}}
\langle\tau\rangle^{-\frac{s_{1}}{2}-\frac{d}{4}+\frac{1}{2}-\alpha}d\tau
\lesssim \langle t\rangle^{-\frac{s_{1}+s}{2}}\mathcal{D}^{2}_{p}(t),
\end{eqnarray*}
where we noticed that $\frac{s_{1}}{2}+\frac{d}{4}-\frac{1}{2}+\alpha>1$ and $\frac{s_{1}}{2}+\frac{d}{4}-\frac{1}{2}+\alpha \geq\frac{s_{1}+s}{2}$ for all $s\leq\frac{d}{2}+1$.

For the term with $g_{3}(a,u^{h})$, following \eqref{Eq:3.19}, we arrive at
\begin{eqnarray*}
\int_{0}^{t}\langle t-\tau\rangle^{-\frac{s_{1}+s}{2}}\left\|g_{3}(a,u^{h})\right\|^{\ell}_{\dot{B}^{-s_{1}}_{2,\infty}}d\tau
&\lesssim & \int_{0}^{t}\langle t-\tau\rangle^{-\frac{s_{1}+s}{2}}\left\|a\right\|_{\dot{B}^{\frac{d}{p}-1}_{p,1}}
\left\|\nabla u^{h}\right\|_{\dot{B}^{\frac{d}{p}}_{p,1}}d\tau\\
&=& \left(\int_{0}^{1}+\int_{1}^{t}\right)\left(\cdots\right)d\tau\triangleq J_{1}+J_{2}.
\end{eqnarray*}
It follows from the definition of $\mathcal{E}(t)$ that
$$J_{1}\lesssim \langle t\rangle^{-\frac{s_{1}+s}{2}}\mathcal{E}^{2}_{p}(1).$$
By using \eqref{Eq:3.15}, \eqref{Eq:3.16} and \eqref{Eq:3.1}, we get, if $t\geq1$,
\begin{eqnarray*}
J_{2}&\lesssim & \int_{1}^{t}\langle t-\tau\rangle^{-\frac{s_{1}+s}{2}}
\langle\tau\rangle^{-\frac{s_{1}}{2}-\frac{d}{4}-\alpha}
\left(\langle\tau\rangle^{\frac{s_{1}}{2}+\frac{d}{4}}\left\|a\right\|_{\dot{B}^{\frac{d}{p}-1}_{p,1}}\right)
\left(\tau^{\alpha}\left\| \nabla u^{h}\right\|_{\dot{B}^{\frac{d}{p}}_{p,1}}\right)d\tau\\
&\lesssim& \left(\sup_{\tau\in[1,t]} \langle\tau\rangle^{\frac{s_{1}}{2}+\frac{d}{4}}\left\|a\right\|_{\dot{B}^{\frac{d}{p}-1}_{p,1}}\right)
\left(\sup_{\tau\in[1,t]}\tau^{\alpha}\left\|\nabla u^{h}\right\|_{\dot{B}^{\frac{d}{p}}_{p,1}}\right)\\
&&\times \int_{1}^{t}\langle t-\tau\rangle^{-\frac{s_{1}+s}{2}}
\langle\tau\rangle^{-\frac{s_{1}}{2}-\frac{d}{4}-\alpha}d\tau
\lesssim\langle t\rangle^{-\frac{s_{1}+s}{2}}\mathcal{D}^{2}_{p}(t).
\end{eqnarray*}
Let us finally look at the term with $g_{4}(a,u^{h})$. We observe that, owing to \eqref{Eq:3.18} and Proposition \ref{Prop2.4},
\begin{eqnarray*}
\int_{0}^{t}\langle t-\tau\rangle^{-\frac{s_{1}+s}{2}}\left\|g_{4}(a,u^{h})\right\|^{\ell}_{\dot{B}^{-s_{1}}_{2,\infty}}d\tau
&\lesssim& \int_{0}^{t}\langle t-\tau\rangle^{-\frac{s_{1}+s}{2}}
\left\|a\right\|_{\dot{B}^{\frac{d}{p}}_{p,1}}\left\| u^{h}\right\|_{\dot{B}^{\frac{d}{p}}_{p,1}}d\tau\\
&=& \left(\int_{0}^{1}+\int_{1}^{t}\right)\left(\cdots\right)d\tau\triangleq K_{1}+K_{2}.
\end{eqnarray*}
It is clear that $K_{1}\lesssim \langle t\rangle^{-\frac{s_{1}+s}{2}} \mathcal{E}^{2}_{p}(1)$ and that, owing to \eqref{Eq:3.11}, \eqref{Eq:3.16} and
\eqref{Eq:3.1}, if $t\geq1$,
\begin{eqnarray*}
K_{2}&= & \int_{1}^{t}\langle t-\tau\rangle^{-\frac{s_{1}+s}{2}}
\langle\tau\rangle^{-\frac{s_{1}}{2}-\frac{d}{2}-\frac{1}{2}-\alpha}
\left(\langle \tau\rangle^{\frac{s_{1}}{2}+\frac{d}{4}+\frac{1}{2}}\left\|a\right\|_{\dot{B}^{\frac{d}{p}}_{p,1}}\right)
\left(\tau ^{\alpha}\left\|u^{h}\right\|_{\dot{B}^{\frac{d}{p}}_{p,1}}\right)d\tau\\
&\lesssim& \left(\sup_{\tau \in[1,t]}\langle \tau\rangle^{\frac{s_{1}}{2}+\frac{d}{4}+\frac{1}{2}}\left\|a\right\|_{\dot{B}^{\frac{d}{p}}_{p,1}}\right)
\left(\sup_{\tau \in [1,t]} \tau ^{\alpha}\left\|u^{h}\right\|_{\dot{B}^{\frac{d}{p}}_{p,1}}\right)\\
&&\times \int_{1}^{t}\langle t-\tau\rangle^{-\frac{s_{1}+s}{2}}
\langle\tau\rangle^{-\frac{s_{1}}{2}-\frac{d}{4}-\frac{1}{2}-\alpha}d\tau
\lesssim  \langle t\rangle^{-\frac{s_{1}+s}{2}}\mathcal{D}^{2}_{p}(t).
\end{eqnarray*}
Hence, the proof of Lemma \ref{Lem3.2} is complete.
\end{proof}
\begin{lem}\label{Lem3.3} Let $p$ satisfy \eqref{Eq:1.5} and $p>d$. It holds that for all $t\geq0$,
\begin{equation}\label{Eq:3.20}
\int^{t}_{0}\langle t-\tau\rangle^{-\frac{s_{1}+s}{2}}\left\|\left(\Lambda^{-1}f^{h},g^{h}\right)\right\|^{\ell}_{\dot{B}^{-s_{1}}_{2,\infty}}d\tau
\lesssim \langle t\rangle^{-\frac{s_{1}+s}{2}}\left(\mathcal{D}^{2}_{p}(t)+\mathcal{E}^{2}_{p}(t)\right).
\end{equation}
\end{lem}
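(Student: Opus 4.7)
The proof will parallel that of Lemma \ref{Lem3.2}, treating separately the five nonlinear contributions $\Lambda^{-1}\mathrm{div}(au^{h})$, $u\cdot\nabla u^{h}$, $k(a)\nabla a^{h}$, $g_{3}(a,u^{h})$ and $g_{4}(a,u^{h})$ appearing in $(\Lambda^{-1}f^{h},g^{h})$. The obstruction to reusing \eqref{Eq:3.18} is that for $p>d$ the exponent $d/p-1$ becomes negative, so the last product estimate of Proposition \ref{Prop2.2} is no longer available. The idea is to replace it by the non-classical product inequalities of Proposition \ref{Prop2.3}, which were designed precisely for this oscillatory regime.

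Since $s_{1}\leq s_{0}$, we first upgrade the target norm via the standard low-frequency embedding $\|\cdot\|^{\ell}_{\dot{B}^{-s_{1}}_{2,\infty}}\lesssim\|\cdot\|^{\ell}_{\dot{B}^{-s_{0}}_{2,\infty}}$, and then apply \eqref{Eq:2.4} or \eqref{Eq:2.5} with a conveniently chosen $\sigma>0$ (typically $\sigma=d/p$). This produces two types of factors: a Besov norm $\|F\|_{\dot{B}^{\sigma}_{p,1}}$ of the whole factor and an $L^{p^{*}}$ norm of its low-frequency truncation $\dot{S}_{j_{0}+N_{0}}F$. We plan to control the latter through the embedding $\dot{B}^{d/p}_{2,1}\hookrightarrow L^{p^{*}}$ (which follows from $1/p^{*}=1/2-1/p$), thereby replacing the $L^{p^{*}}$ norm by a low-frequency $L^{2}$-type Besov norm already built into $\mathcal{D}_{p}(t)$ via \eqref{Eq:3.14}. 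The composition factors $k(a)$, $I(a)$, $\widetilde{\mu}(a)$, $\widetilde{\lambda}(a)$ will be reduced to Besov norms of $a$ alone by means of Proposition \ref{Prop2.4}.

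The time-integral bookkeeping then proceeds exactly as in Lemma \ref{Lem3.2}. After Proposition \ref{Prop2.3}, each integrand is dominated by a product of factors extracted from $\mathcal{D}_{p}(\tau)$ times a power $\langle\tau\rangle^{-\beta}$, the exponent $\beta$ being built from the decay rates recorded in \eqref{Eq:3.11}, \eqref{Eq:3.14}, \eqref{Eq:3.15} and \eqref{Eq:3.16}. For terms carrying the singular weight $\tau^{-\alpha}$ coming from the $\dot{B}^{d/p}_{p,1}$ norm of $u^{h}$ (namely $u\cdot\nabla u^{h}$, $g_{3}(a,u^{h})$ and $g_{4}(a,u^{h})$) we split $\int_{0}^{t}=\int_{0}^{1}+\int_{1}^{t}$, bounding the first piece by $\mathcal{E}_{p}^{2}(1)\langle t\rangle^{-(s_{1}+s)/2}$ via Theorem \ref{Thm1.1} and the second by \eqref{Eq:3.1}, after verifying that the aggregate $\tau$-exponent strictly exceeds $1$ and dominates $(s_{1}+s)/2$. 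The remaining contributions $\Lambda^{-1}\mathrm{div}(au^{h})$ and $k(a)\nabla a^{h}$ are integrable in $\tau$ near zero and need no such splitting.

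The main difficulty will be tuning $\sigma$ in Proposition \ref{Prop2.3} for each term so that both the $\dot{B}^{\sigma}_{p,1}$ factor and the $\dot{B}^{-\sigma}_{p,\infty}$ factor are controlled by \emph{decaying} pieces of $\mathcal{D}_{p}(t)$ with enough total time-decay to feed \eqref{Eq:3.1}. The convection term $u\cdot\nabla u^{h}$ looks the most delicate, since the high-frequency factor $\nabla u^{h}$ only enjoys the rate $\tau^{-\alpha}$ in $\dot{B}^{d/p}_{p,1}$, leaving a tight margin; the term $g_{4}(a,u^{h})$, which loses one derivative on $a$, will similarly force us to appeal to the sharp exponent $\alpha=s_{1}+d/2+1/2-\varepsilon$ from the definition of $\mathcal{D}_{p}(t)$ rather than to any weaker bound, and to use the extra Besov regularity of $a$ at high frequency made available by \eqref{Eq:3.11}.
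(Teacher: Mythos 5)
Your proposal is correct and follows essentially the same route as the paper: reduce to the $\dot{B}^{-s_0}_{2,\infty}$ norm via $s_1\leq s_0$, apply the non-classical product estimates of Proposition \ref{Prop2.3}, absorb the $L^{p^*}$ truncation through $\dot{B}^{d/p}_{2,1}\hookrightarrow L^{p^*}$ (plus Bernstein for the finitely many high blocks), reduce compositions with Proposition \ref{Prop2.4}, and split $\int_0^t=\int_0^1+\int_1^t$ for the terms carrying the singular weight $\tau^{-\alpha}$ before invoking \eqref{Eq:3.1}. The only cosmetic difference is that the paper fixes $\sigma=1-\frac{d}{p}$ in \eqref{Eq:2.4}--\eqref{Eq:2.5} rather than $\sigma=\frac{d}{p}$, which is exactly the per-term tuning you anticipate and does not change the argument.
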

\begin{proof}
Let us end the first step in the case where \eqref{Eq:1.8} is fulfilled and $p>d$. Using the fact that $s_{1}\leq s_{0}$ and
applying \eqref{Eq:2.4} with $\sigma=1-\frac{d}{p}$ yield
\begin{equation}\label{Eq:3.21}
\left\|F G^{h}\right\|^{\ell}_{\dot{B}^{-s_{1}}_{2,\infty}}\lesssim
\left\|F G^{h}\right\|^{\ell}_{\dot{B}^{-s_{0}}_{2,\infty}}\lesssim \left(\left\|F\right\|_{\dot{B}^{1-\frac{d}{p}}_{p,1}}+\left\|\dot{S}_{j_{0}+N_{0}}F\right\|_{L^{p^{*}}}\right)\left\|G^{h}\right\|_{\dot{B}^{\frac{d}{p}-1}_{p,1}}
\end{equation}
with $\frac{1}{p^{*}}\triangleq\frac{1}{2}-\frac{1}{p}$ and thus, because $\dot{B}^{\frac{d}{p}}_{2,1}\hookrightarrow L^{p^{*}}$,
\begin{equation}\label{Eq:3.22}
\left\|F G^{h}\right\|^{\ell}_{\dot{B}^{-s_{1}}_{2,\infty}}\lesssim
\left\|F G^{h}\right\|^{\ell}_{\dot{B}^{-s_{0}}_{2,\infty}}\lesssim \left(\left\|F^{\ell}\right\|_{\dot{B}^{\frac{d}{p}}_{2,1}}+\left\|F\right\|_{\dot{B}^{1-\frac{d}{p}}_{p,1}}
\right)\left\|G^{h}\right\|_{\dot{B}^{\frac{d}{p}-1}_{p,1}}.
\end{equation}
Moreover, in the case $p>d$, we have
\begin{equation} \label{Eq:3.23}
\left\|a(\tau)\right\|_{\dot{B}^{1-\frac{d}{p}}_{p,1}}
\lesssim \langle\tau\rangle^{-\frac{s_{1}+2-s_{0}}{2}}\mathcal{D}_{p}(\tau),\
\left\|a^{\ell}(\tau)\right\|_{\dot{B}^{\frac{d}{p}}_{2,1}}
\lesssim \langle\tau\rangle^{-\frac{s_{1}}{2}-\frac{d}{2p}-\frac{1}{2}}\mathcal{D}_{p}(\tau)
 \ \ \hbox{for} \ \ \tau\geq0.
\end{equation}
Indeed, due to the embedding and the definition of $\mathcal{D}_{p}(t)$, we obtain
\begin{eqnarray*}
\left\|a^{\ell}(\tau)\right\|_{\dot{B}^{1-\frac{d}{p}}_{p,1}}
&\lesssim& \left\|a^{\ell}(\tau)\right\|_{\dot{B}^{1-s_{0}}_{2,1}}
\lesssim \langle\tau\rangle^{-\frac{s_{1}+2-s_{0}}{2}} \mathcal{D}_{p}(\tau),
\end{eqnarray*}
where the relation $p>d$ implies that $-s_{1}-1<-s_{1}<\frac{d}{2}-1<1-s_{0}<\frac{d}{2}$.

As $1-\frac{d}{p}<\frac{d}{p}$, it holds that
\begin{equation*}
\left\|a^{h}(\tau)\right\|_{\dot{B}^{1-\frac{d}{p}}_{p,1}}
\lesssim \left\|a^{h}(\tau)\right\|_{\dot{B}^{\frac{d}{p}}_{p,1}}
\lesssim \langle\tau\rangle^{-\alpha} \mathcal{D}_{p}(\tau),
\end{equation*}
and the first inequality of \eqref{Eq:3.23} is thus satisfied by $a^{h}$ owing to $\frac{s_{1}+2-s_{0}}{2}<\frac{s_{1}}{2}+\frac{d}{4}+\frac{1}{2}\leq\alpha$ for small enough $\varepsilon>0$. Thanks to $-s_{1}-1<-s_{1}<\frac{d}{2}-1\leq\frac{d}{p}<\frac{d}{2}$ for all $p\leq \frac{2d}{d-2}$,
the second inequality of \eqref{Eq:3.23} is easily followed by the definition of $\mathcal{D}_{p}(t)$.

Taking $F=a$ and $G=u$ in \eqref{Eq:3.22} and using \eqref{Eq:3.23} and \eqref{Eq:3.16}, we get
\begin{eqnarray*}
&&\int_{0}^{t}\langle t-\tau\rangle^{-\frac{s_{1}+s}{2}}\left\|\Lambda^{-1}\mathrm{div}\,(au^{h})\right\|^{\ell}_{\dot{B}^{-s_{1}}_{2,\infty}}d\tau\\
&\lesssim & \int_{0}^{t}\langle t-\tau\rangle^{-\frac{s_{1}+s}{2}}\left(\left\|a^{\ell}\right\|_{\dot{B}^{\frac{d}{p}}_{2,1}}+\left\|a\right\|_{\dot{B}^{1-\frac{d}{p}}_{p,1}}\right)
\left\|u^{h}\right\|_{\dot{B}^{\frac{d}{p}-1}_{p,1}}d\tau\\
&\lesssim& \mathcal{D}^{2}_{p}(t)\int_{0}^{t}\langle t-\tau\rangle^{-\frac{s_{1}+s}{2}} \left(\langle\tau\rangle^{-\frac{s_{1}}{2}-\frac{d}{2p}-\frac{1}{2}}
+\langle\tau\rangle^{-\frac{s_{1}+2-s_{0}}{2}}\right)\langle\tau\rangle^{-\alpha}d\tau.
\end{eqnarray*}
Remembering that $\alpha>1>\frac{d}{p}$ and $\alpha\geq\frac{s_{1}}{2}+\frac{d}{4}+\frac{1}{2}$ for small enough $\varepsilon>0$, it is shown that
\begin{equation*}
\frac{s_{1}+s}{2}\leq\frac{s_{1}}{2}+\frac{d}{4}+\frac{1}{2}<\frac{s_{1}}{2}+\frac{d}{2p}+\frac{1}{2}+\alpha\leq2\alpha, \ \ \ \
\frac{s_{1}+s}{2}\leq\frac{s_{1}+2-s_{0}}{2}+\alpha
\end{equation*}
and
\begin{equation*}
\frac{s_{1}}{2}+\frac{d}{2p}+\frac{1}{2}+\alpha>1-\frac{d}{4}+\frac{d}{2p}+\alpha>1, \ \ \ \ \ \ \
\frac{s_{1}+1-s_{0}}{2}+\alpha>1,
\end{equation*}
since $p\leq \frac{2d}{d-2}$ implies that $\frac{d}{p}\geq\frac{d}{2}-1$. Therefore, we deduce (thanks to \eqref{Eq:3.1}) that
\begin{equation*}
\int_{0}^{t}\langle t-\tau\rangle^{-\frac{s_{1}+s}{2}}\left\|\Lambda^{-1}\mathrm{div}\,(au^{h})\right\|^{\ell}_{\dot{B}^{-s_{1}}_{2,\infty}}d\tau
\lesssim \langle t\rangle^{-\frac{s_{1}+s}{2}}\mathcal{D}^{2}_{p}(t).
\end{equation*}
To bound the term with $k(a)\nabla a^{h}$, we see that, using the composition inequality in Lebesgue spaces and the embeddings $\dot{B}^{\frac{d}{p}}_{2,1}\hookrightarrow L^{p^{*}}$ and $\dot{B}^{s_{0}}_{p,1}\hookrightarrow L^{p^{*}}$ implies
\begin{equation*}
\left\|k(a)\right\|_{L^{p^{*}}}\lesssim\left\|a\right\|_{L^{p^{*}}}\lesssim\left\|a^{\ell}\right\|_{\dot{B}^{\frac{d}{p}}_{2,1}}
+\left\|a^{h}\right\|_{\dot{B}^{s_{0}}_{p,1}}
\lesssim\left\|a^{\ell}\right\|_{\dot{B}^{\frac{d}{p}}_{2,1}}+\left\|a^{h}\right\|_{\dot{B}^{\frac{d}{p}}_{p,1}}.
\end{equation*}
Hence, taking advantage of \eqref{Eq:3.21} and Proposition \ref{Prop2.4},
\begin{equation}\label{Eq:3.24}
\left\|k(a)\nabla a^{h}\right\|^{\ell}_{\dot{B}^{-s_{1}}_{2,\infty}}
\lesssim\left(\left\|a\right\|_{\dot{B}^{1-\frac{d}{p}}_{p,1}}
+\left\|a^{\ell}\right\|_{\dot{B}^{\frac{d}{p}}_{2,1}}+\left\|a^{h}\right\|_{\dot{B}^{\frac{d}{p}}_{p,1}}\right)
\left\|\nabla a^{h}\right\|_{\dot{B}^{\frac{d}{p}-1}_{p,1}}.
\end{equation}
According to \eqref{Eq:3.16} and \eqref{Eq:3.23}, one can bound the term corresponding to $k(a)\nabla a^{h}$ as $\Lambda^{-1}\mathrm{div}\,(au^{h})$.
For the term containing $u\cdot \nabla u^{h}$, we note that, thanks to \eqref{Eq:3.22}, the relation $1-\frac{d}{p}<\frac{d}{p}$ and interpolation,
\begin{equation*}
\left\|u\cdot \nabla u^{h}\right\|^{\ell}_{\dot{B}^{-s_{1}}_{2,\infty}} \lesssim \left(\left\|u^{\ell}\right\|_{\dot{B}^{\frac{d}{p}}_{2,1}}
+\left\|u^{\ell}\right\|_{\dot{B}^{1-\frac{d}{p}}_{p,1}}+\left\|u^{h}\right\|_{\dot{B}^{\frac{d}{p}-1}_{p,1}}\right)
\left\|\nabla u^{h}\right\|_{\dot{B}^{\frac{d}{p}}_{p,1}}.
\end{equation*}
Owing to $\frac{d}{p}-1<1-\frac{d}{p}$ and $\frac{d}{p}\geq\frac{d}{2}-1$, we have by embedding,
\begin{equation}\label{Eq:3.25}
\left\|(a^{\ell},u^{\ell})(\tau)\right\|_{\dot{B}^{\frac{d}{p}}_{2,1}}
\lesssim \left\|(a^{\ell},u^{\ell})(\tau)\right\|_{\dot{B}^{\frac{d}{2}-1}_{2,1}}, \ \
\left\|(a^{\ell},u^{\ell})(\tau)\right\|_{\dot{B}^{1-\frac{d}{p}}_{p,1}}
\lesssim \left\|(a^{\ell},u^{\ell})(\tau)\right\|_{\dot{B}^{\frac{d}{2}-1}_{2,1}}.
\end{equation}
Therefore, we arrive at
\begin{eqnarray*}
&&\int_{0}^{t}\langle t-\tau\rangle^{-\frac{s_{1}+s}{2}}\left\|u\cdot \nabla u^{h}\right\|^{\ell}_{\dot{B}^{-s_{1}}_{2,\infty}}d\tau\\
&\lesssim& \int_{0}^{t}\langle t-\tau\rangle^{-\frac{s_{1}+s}{2}} \left(\left\|u^{\ell}\right\|_{\dot{B}^{\frac{d}{2}-1}_{2,1}}
+\left\|u^{h}\right\|_{\dot{B}^{\frac{d}{p}-1}_{p,1}}\right)
\left\|\nabla u^{h}\right\|_{\dot{B}^{\frac{d}{p}}_{p,1}} d\tau\\
&=& \left(\int_{0}^{1}+ \int_{1}^{t}\right)(\cdots)d\tau\triangleq \tilde{I}_{1}+\tilde{I}_{2}.
\end{eqnarray*}
It just follows from the definition of $\mathcal{E}_{p}(t)$ that
\begin{equation*}
\tilde{I}_{1}\lesssim \langle t\rangle^{-\frac{s_{1}+s}{2}}\mathcal{E}^{2}_{p}(1).
\end{equation*}
Using the fact that, due to $-s_{1}-1<s_{1}<\frac{d}{2}-1<\frac{d}{2}<\frac{d}{2}+1$,
\begin{equation*}
\sup_{\tau \in [0,t]}\langle\tau\rangle^{\frac{s_{1}}{2}+\frac{d}{4}}\left\|a^{\ell} (\tau)\right\|_{\dot{B}^{\frac{d}{2}-1}_{2,1}}
+\sup_{\tau \in [0,t]}\langle\tau\rangle^{\frac{s_{1}}{2}+\frac{d}{4}-\frac{1}{2}}\left\| u^{\ell} (\tau)\right\|_{\dot{B}^{\frac{d}{2}-1}_{2,1}}
\lesssim \mathcal{D}_{p}(t)
\end{equation*}
together with \eqref{Eq:3.16} and \eqref{Eq:3.1}, we thus get, if $t\geq1$,
\begin{equation*}
\tilde{I}_{2}\lesssim \mathcal{D}^{2}_{p}(t)\int_{1}^{t} \langle t-\tau\rangle^{-\frac{s_{1}+s}{2}}
\left(\langle\tau\rangle^{-\frac{s_{1}}{2}-\frac{d}{4}+\frac{1}{2}}+\langle\tau\rangle^{-\alpha}\right)\langle\tau\rangle^{-\alpha}d\tau
\lesssim\langle t\rangle^{-\frac{s_{1}+s}{2}}\mathcal{D}^{2}_{p}(t),
\end{equation*}
since the relations $s_{1}>1-\frac{d}{2}$ and $\alpha>1>\frac{s_{1}}{2}+\frac{d}{4}$ lead to $2\alpha>\frac{s_{1}}{2}+\frac{d}{4}-\frac{1}{2}+\alpha>1$
and $\frac{s_{1}}{2}+\frac{d}{4}-\frac{1}{2}+\alpha>\frac{s_{1}}{2}+\frac{d}{4}+\frac{1}{2}\geq\frac{s_{1}+s}{2}$ for all $s\leq\frac{d}{2}+1$.

In order to bound the terms with $g_{3}(a,u^{h})$, we mimic the procedure leading to \eqref{Eq:3.24} and get
\begin{equation*}
\left\|g_{3}(a,u^{h})\right\|^{\ell}_{\dot{B}^{-s_{1}}_{2,\infty}}
\lesssim\left(\left\|a\right\|_{\dot{B}^{1-\frac{d}{p}}_{p,1}}
+\left\|a^{\ell}\right\|_{\dot{B}^{\frac{d}{p}}_{2,1}}+\left\|a^{h}\right\|_{\dot{B}^{\frac{d}{p}}_{p,1}}\right)\left\| \nabla u^{h}\right\|_{\dot{B}^{\frac{d}{p}}_{p,1}}.
\end{equation*}
Hence, using the fact that $1-\frac{d}{p}<\frac{d}{p}$ and \eqref{Eq:3.25},
\begin{equation*}
\left\|g_{3}(a,u^{h})\right\|^{\ell}_{\dot{B}^{-s_{1}}_{2,\infty}}
\lesssim\left(\left\|a^{\ell}\right\|_{\dot{B}^{\frac{d}{2}-1}_{2,1}}+\left\|a^{h}\right\|_{\dot{B}^{\frac{d}{p}}_{p,1}}\right)\left\| \nabla u^{h}\right\|_{\dot{B}^{\frac{d}{p}}_{p,1}},
\end{equation*}
and one can conclude that exactly as the previous term $u\cdot\nabla u^{h}$ that
\begin{equation*}
\int_{0}^{t}\langle t-\tau\rangle^{-\frac{s_{1}+s}{2}}\left\|g_{3}(a,u^{h})\right\|^{\ell}_{\dot{B}^{-s_{1}}_{2,\infty}}d\tau
\lesssim\langle t\rangle^{-\frac{s_{1}+s}{2}}\left(\mathcal{D}^{2}_{p}(t)+\mathcal{E}^{2}_{p}(t)\right).
\end{equation*}

Lastly, let us consider the term with $g_{4}(a,u^{h})$. Applying \eqref{Eq:2.5} with $\sigma=1-\frac{d}{p}$ yields for any smooth function $K$ vanishing at $0$,
\begin{equation*}
\left\|\nabla K(a)\otimes \nabla u^{h}\right\|^{\ell}_{\dot{B}^{-s_{0}}_{2,\infty}}
\lesssim \left(\left\|\nabla u^{h}\right\|_{\dot{B}^{1-\frac{d}{p}}_{p,1}}
+\sum_{j=j_{0}}^{j_{0}+N_{0}-1}\left\|\dot{ \Delta}_{j}\nabla u^{h}\right\|_{L^{p^{*}}}\right)\left\|\nabla K(a)\right\|_{\dot{B}^{\frac{d}{p}-1}_{p,1}}.
\end{equation*}
As $p^{*}\geq p$, we get from Bernstein inequality that for $j_{0}\leq j<j_{0}+N_{0}$,
\begin{equation*}
\left\|\dot{ \Delta}_{j}\nabla u^{h}\right\|_{L^{p^{*}}}\lesssim \left\|\dot{ \Delta}_{j}\nabla u^{h}\right\|_{L^{p}}.
\end{equation*}
Hence, keeping in mind $s_{1}\leq s_{0}$ and using Proposition \ref{Prop2.4} and the fact that $1-\frac{d}{p}<\frac{d}{p}$, we have
\begin{equation*}
\left\|g_{4}(a,u^{h})\right\|^{\ell}_{\dot{B}^{-s_{1}}_{2,\infty}}
\lesssim\left\|g_{4}(a,u^{h})\right\|^{\ell}_{\dot{B}^{-s_{0}}_{2,\infty}}
\lesssim \left\|a\right\|_{\dot{B}^{\frac{d}{p}}_{p,1}}\left\|\nabla u^{h}\right\|_{\dot{B}^{1-\frac{d}{p}}_{p,1}}
\lesssim \left\|a\right\|_{\dot{B}^{\frac{d}{p}}_{p,1}}\left\|\nabla u^{h}\right\|_{\dot{B}^{\frac{d}{p}}_{p,1}}.
\end{equation*}
Therefore,
\begin{eqnarray*}
\int_{0}^{t}\langle t-\tau\rangle^{-\frac{s_{1}+s}{2}}\left\|g_{4}(a,u^{h})\right\|^{\ell}_{\dot{B}^{-s_{1}}_{2,\infty}}d\tau
&\lesssim&\int_{0}^{t}\langle t-\tau\rangle^{-\frac{s_{1}+s}{2}}\left\|a\right\|_{\dot{B}^{\frac{d}{p}}_{p,1}}\left\|\nabla u^{h}\right\|_{\dot{B}^{\frac{d}{p}}_{p,1}}d\tau \nonumber\\
&=&\left(\int_{0}^{1}+\int_{1}^{t}\right)(\cdots)d\tau
\triangleq  \tilde{J}_{1}+\tilde{J}_{2}.
\end{eqnarray*}
For $\tilde{J}_{1}$, it is clear that
\begin{equation*}
\tilde{J}_{1}\lesssim \langle t\rangle^{-\frac{s_{1}+s}{2}}\mathcal{E}^{2}_{p}(1)
\end{equation*}
 and that, thanks to \eqref{Eq:3.11}, \eqref{Eq:3.16} and \eqref{Eq:3.1}, if $t\geq1$,
\begin{eqnarray*}
\tilde{J}_{2}&\lesssim& \left(\sup_{\tau \in [1,t]}\langle\tau\rangle^{\frac{s_{1}}{2}+\frac{d}{4}+\frac{1}{2}}\left\|a\right\|_{\dot{B}^{\frac{d}{p}}_{p,1}}\right)
\left(\sup_{\tau \in [1,t]}\tau^{\alpha}\left\|\nabla u^{h}\right\|_{\dot{B}^{\frac{d}{p}}_{p,1}}\right)\\
&&\times\int_{1}^{t}\langle t-\tau\rangle^{-\frac{s_{1}+s}{2}}\langle\tau\rangle^{-\frac{s_{1}}{2}-\frac{d}{4}-\frac{1}{2}-\alpha}d\tau
\lesssim \langle t\rangle^{-\frac{s_{1}+s}{2}}\mathcal{D}^{2}_{p}(t).
\end{eqnarray*}
In summary, \eqref{Eq:3.20} can be followed.
\end{proof}

Combining those estimates in Lemmas of \ref{Lem3.1}-\ref{Lem3.3}, we get \eqref{Eq:3.7} eventually . Hence, the proof of Proposition
\ref{Prop3.1} is finished. Furthermore, together with \eqref{Eq:3.6}, we conclude that
\begin{equation} \label{Eq:3.26}
\langle t\rangle^{\frac{s_{1}+s}{2}}\left\|\left(\tilde{a},u\right)(t)\right\|^{\ell}_{\dot{B}^{s}_{2,1}}
\lesssim \mathcal{D}_{p,0}+\mathcal{D}^{2}_{p}(t)+\mathcal{E}^{2}_{p}(t) \ \ \hbox{for all} \ \ t\geq0,
\end{equation}
provided that $-s_{1}<s\leq\frac{d}{2}+1$.

\subsection{Second Step: Decay  Estimates  for  the  High  Frequencies  of  $\left(\nabla a,u\right)$}
In this section, we shall perform the energy approach of $L^{p}$ type in terms of the effective velocity, and bound the second term of $\mathcal{D}_{p}(t)$.
We highlight the new observation and track the optimal exponent for high frequencies of $(a,u)$ with the assumption \eqref{Eq:1.8}-\eqref{Eq:1.9}.

Let $\mathcal{P}\triangleq \mathrm{Id}+\nabla\left(-\Delta\right)^{-1}\mathrm{div}$
be the Leray projector onto divergence-free vector-fields. It follows from \eqref{Eq:1.7}
that $\mathcal{P}u$ fulfills
\begin{equation*}
\partial _{t}\mathcal{P}u-\mu_{\infty} \Delta \mathcal{P}u=\mathcal{P}g.
\end{equation*}
Following from Haspot's approach in \cite{HB} which is originated from Hoff's viscous effective flux \cite{HD}, let us introduce the effective velocity $w$ defined in \eqref{Eq:1.12}.
We observe that $(a,w)$ satisfies
\begin{equation*}
\left\{
\begin{array}{l}
\partial _{t}w-\Delta w=\nabla\left(-\Delta \right)^{-1}\left(f-\mathrm{div}\,g\right) +w-2\left(-\Delta \right)^{-1}\nabla a, \\
\partial _{t}a+a=f-\mathrm{div}\,w.
\end{array}
\right.
\end{equation*}
\begin{prop}\label{Prop3.2}
If $p$ fulfills \eqref{Eq:1.5}, then it holds that for all $T\geq0$,
\begin{equation}\label{Eq:3.27}
\left\|\langle t\rangle^{\alpha}\left(\nabla a,u\right)\right\|^{h}_{\tilde{L}^{\infty}_{T}(\dot{B}^{\frac{d}{p}-1}_{p,1})}
\lesssim \left\|\left(\nabla a_{0},u_{0}\right)\right\|^{h}_{\dot{B}^{\frac{d}{p}-1}_{p,1}}+\mathcal{E}^{2}_{p}(T)+\mathcal{D}^{2}_{p}(T)
\end{equation}
with $\alpha=s_{1}+\frac d2+\frac 12-\varepsilon$ for sufficiently $\varepsilon>0$, where $\mathcal{E}_{p}(T)$ and $\mathcal{D}_{p}(T)$ are defined in \eqref{Eq:1.6} and \eqref{Eq:1.11}, respectively.
\end{prop}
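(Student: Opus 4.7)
The plan is to leverage the equivalence between the high-frequency behaviour of $(\nabla a, u)$ and that of $(\nabla a, w)$, where $w$ is the effective velocity \eqref{Eq:1.12}. Since $\nabla(-\Delta)^{-1}$ is a Fourier multiplier of order $-1$, the coupling $\mathcal{Q}u = -w - \nabla(-\Delta)^{-1}a$ is harmless at frequencies $\geq 2^{j_0-1}$: bounds on $(a,w)^{h}$ in $\dot{B}^{\frac{d}{p}-1}_{p,1}$ transfer to bounds on $(\nabla a,\mathcal{Q}u)^{h}$ in the same space, and $\mathcal{P}u$ solves a pure heat equation treated separately by Proposition \ref{Prop2.6}. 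The virtue of $w$ is that the damped equation $\partial_t a + a = f - \mathrm{div}\,w$ no longer loses a derivative on $a$, so a clean $L^p$ energy estimate becomes available.

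First, I would apply $\dot{\Delta}_j$ (for $j \geq j_0-1$) to the $(a,w)$ system. Multiplying the $a$--equation by $|\dot{\Delta}_j a|^{p-2}\dot{\Delta}_j a$, integrating, and bounding the commutator $[u\cdot\nabla,\dot{\Delta}_j]a$ (coming from rewriting $f$) by Proposition \ref{Prop2.5} yields
$$\frac{d}{dt}\|\dot{\Delta}_j a\|_{L^p} + \|\dot{\Delta}_j a\|_{L^p} \lesssim \|\dot{\Delta}_j(f-\mathrm{div}\,w)\|_{L^p}+c_j 2^{-j(\frac{d}{p}-1)}\|\nabla u\|_{\dot{B}^{\frac{d}{p}}_{p,1}}\|\nabla a\|_{\dot{B}^{\frac{d}{p}-1}_{p,1}}.$$
The parabolic Remark \ref{Rem2.2} applied to the $w$--equation gives
$$\frac{d}{dt}\|\dot{\Delta}_j w\|_{L^p} + c\,2^{2j}\|\dot{\Delta}_j w\|_{L^p} \lesssim \|\dot{\Delta}_j\text{RHS}_w\|_{L^p}.$$
Because $2^{2j}\geq c$ for $j\geq j_0-1$, adding the two inequalities produces a uniform exponential decay rate $e^{-c(t-\tau)}$ in the resulting Duhamel formula.

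Next, I would multiply by $\langle t\rangle^{\alpha}$ and use the elementary inequality $\langle t\rangle^{\alpha}e^{-c(t-\tau)}\lesssim\langle\tau\rangle^{\alpha}e^{-c(t-\tau)/2}$ to move the weight inside the time integral. Summing over $j\geq j_0-1$ with the weight $2^{j(\frac{d}{p}-1)}$ in $\ell^1$, this yields schematically
$$\|\langle t\rangle^{\alpha}(a,w)\|^{h}_{\tilde{L}^\infty_T(\dot{B}^{\frac{d}{p}-1}_{p,1})}\lesssim \|(a_0,w_0)\|^{h}_{\dot{B}^{\frac{d}{p}-1}_{p,1}}+\int_0^T\bigl\|\langle\tau\rangle^{\alpha}\text{RHS}(\tau)\bigr\|^{h}_{\dot{B}^{\frac{d}{p}-1}_{p,1}}\,d\tau.$$
The RHS includes the lower-order terms $w$ and $(-\Delta)^{-1}\nabla a$, both absorbed via Bernstein (they carry an extra $2^{-2j}$ factor that a Gronwall argument or a time-splitting absorbs into the LHS), plus the genuinely nonlinear sources $-\mathrm{div}(au)$, $u\cdot\nabla u$, $k(a)\nabla a$, $I(a)\mathcal{A}u$, $g_3(a,u)$, $g_4(a,u)$. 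Each such term is split into low/high--frequency products and bounded by Propositions \ref{Prop2.2}--\ref{Prop2.4}, then combined with the decay rates \eqref{Eq:3.11}--\eqref{Eq:3.16} already established in Step 1 of the paper.

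The main obstacle is to show that, for every one of these nonlinear contributions, the time integrand is bounded by $\langle\tau\rangle^{-\alpha}$ times an integrable factor, so that the integral is indeed controlled by $\mathcal{D}_p^2(T)+\mathcal{E}_p^2(T)$ without producing $\langle T\rangle$--growth. The critical cases are the convective term $u\cdot\nabla u$ (where only the split $u^\ell\cdot\nabla u^\ell+u^\ell\cdot\nabla u^h+u^h\cdot\nabla u$ generates the right rate) and $k(a)\nabla a$, both of which involve factors whose sharpest decay comes from $\langle\tau\rangle^{-(s_1+s)/2}$ with $s$ near $\frac{d}{2}+1$. The strict inequality $\alpha<s_1+\frac{d}{2}+\frac{1}{2}$ (i.e.\ $\varepsilon>0$) is precisely what makes the resulting time integrals converge, and accounts for the $-\varepsilon$ loss in the definition of $\alpha$; any attempt to reach the endpoint $\alpha=s_1+\frac{d}{2}+\frac{1}{2}$ would create a logarithmic divergence at $\tau\to\infty$.
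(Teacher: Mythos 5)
Your overall architecture --- effective velocity $w$, damped equation for $a$, parabolic smoothing for $w$ and $\mathcal{P}u$, commutator estimate for the convection term, then product laws combined with the Step-1 decay rates --- matches the paper's (which outsources the $L^p$ energy computation to \cite{DX} and arrives at \eqref{Eq:3.28}). However, there is a genuine gap in how you convert the Duhamel formula into a time-weighted bound. After writing $\langle t\rangle^{\alpha}e^{-c(t-\tau)}\lesssim\langle\tau\rangle^{\alpha}e^{-c(t-\tau)/2}$ you discard the remaining exponential kernel and reduce the proposition to the bound $\int_0^T\bigl\|\langle\tau\rangle^{\alpha}\mathrm{RHS}(\tau)\bigr\|^{h}_{\dot{B}^{d/p-1}_{p,1}}d\tau\lesssim\mathcal{D}_p^2(T)+\mathcal{E}_p^2(T)$, i.e.\ to the \emph{integrability} on $[0,\infty)$ of the weighted nonlinearity. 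That is strictly stronger than what the functional $\mathcal{D}_p$ delivers, and it fails for the low--low interactions. For instance, for $a^{\ell}u^{\ell}$ the best available rates are $\|a^{\ell}\|_{\dot{B}^{d/p}_{p,1}}\lesssim\langle\tau\rangle^{-\frac{s_1}{2}-\frac{d}{4}-\frac{1}{2}}\mathcal{D}_p$ and $\|u^{\ell}\|_{\dot{B}^{d/p}_{p,1}}\lesssim\langle\tau\rangle^{-\frac{s_1}{2}-\frac{d}{4}}\mathcal{D}_p$, so $\|a^{\ell}u^{\ell}\|^{h}_{\dot{B}^{d/p-1}_{p,1}}\lesssim\langle\tau\rangle^{-(s_1+\frac{d}{2}+\frac{1}{2})}\mathcal{D}_p^2$ and hence $\langle\tau\rangle^{\alpha}\|a^{\ell}u^{\ell}\|\sim\langle\tau\rangle^{-\varepsilon}\mathcal{D}_p^2$, which is not integrable for small $\varepsilon$: your integral grows like $T^{1-\varepsilon}$. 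The same saturation occurs for $u^{\ell}\cdot\nabla u^{\ell}$, $k(a)\nabla a^{\ell}$, $a^{h}u^{\ell}$, etc.; indeed these terms are exactly why $\alpha$ cannot exceed $s_1+\frac{d}{2}+\frac{1}{2}$ (Remark \ref{Rem1.1}), so no margin of integrability exists.

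The correct step --- the one the paper takes in \eqref{Eq:3.31} --- is to \emph{keep} the exponential kernel and use
\begin{equation*}
\langle t\rangle^{\alpha}\int_{1}^{t}e^{-c_{0}(t-\tau)}Z_{j}(\tau)\,d\tau\lesssim\sup_{\tau\in[1,t]}\tau^{\alpha}Z_{j}(\tau),
\end{equation*}
which follows from $\int_{1}^{t}e^{-c_{0}(t-\tau)}\tau^{-\alpha}d\tau\lesssim\langle t\rangle^{-\alpha}$ and only requires the \emph{uniform} bound $\sup_{\tau}\tau^{\alpha}Z_{j}(\tau)<\infty$, not integrability; the interval $[0,1]$ (or $[0,2]$) is treated separately with $\mathcal{E}_p$. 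Relatedly, your closing claim that $\varepsilon>0$ is ``precisely what makes the time integrals converge'' is off: under your scheme even $\varepsilon>0$ does not save the integral (one would need $\varepsilon>1$), and in the paper the $-\varepsilon$ in $\alpha$ instead serves to keep the needed regularity indices inside the admissible range of $\mathcal{D}_p$ and to pass from $L^{\infty}_{T}(\dot{B}^{\frac{d}{2}-\varepsilon}_{2,1})$ bounds to $\tilde{L}^{\infty}_{T}(\dot{B}^{\frac{d}{p}}_{p,1})$ bounds at low frequency, as in \eqref{Eq:3.33}--\eqref{Eq:3.34}.
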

\begin{proof}
By applying the $L^{p}$ energy approach, we can conclude that (see \cite{DX} for details)
\begin{eqnarray}
\left\|\langle t\rangle^{\alpha}\left(\nabla a,u\right)\right\|^{h}_{\tilde{L}^{\infty}_{T}(\dot{B}^{\frac{d}{p}-1}_{p,1})}
&\lesssim& \|(\nabla a_{0},u_{0}\|^{h}_{\dot{B}^{\frac{d}{p}-1}_{p,1}} \nonumber\\
\label{Eq:3.28}
&&+\sum_{j\geq j_{0}-1}\sup_{t\in[0,T]}\left(\langle t\rangle^{\alpha}\int_{0}^{t}e^{-c_{0}(t-\tau)}2^{j(\frac{d}{p}-1)}Z_{j}(\tau)d\tau\right)
\end{eqnarray}
with $Z_{j}\triangleq Z^{1}_{j}+\cdots + Z^{5}_{j}$ and
\begin{eqnarray*}
&&Z^{1}_{j}\triangleq\left\|\dot{\Delta}_{j}\left(au\right)\right\|_{L^{p}},\ \ \ \ \ \ \ \ \ Z^{2}_{j}\triangleq\left\|g_{j}\right\|_{L^{p}},\ \ \ \ \ \ Z^{3}_{j}\triangleq\left\|\nabla \dot{\Delta}_{j}\left(a\,\mathrm{div}\,u\right)\right\|_{L^{p}},\ \ \ \ \ \ \\
&&Z^{4}_{j}\triangleq\left\|R_{j}\right\|_{L^{p}},\ \ \ \ \ \ \  \ \ \ \ \ \ \ \ \ Z^{5}_{j}\triangleq\left\|\mathrm{div}\,u\right\|_{L^{\infty}}\left\|\nabla a_{j}\right\|_{L^{p}},
\end{eqnarray*}
where the notations $a_{j}\triangleq \dot{\Delta}_{j}a$, $g_{j}\triangleq \dot{\Delta}_{j}g$ and $R_{j}\triangleq [u\cdot \nabla,\nabla\dot{\Delta}_{j}]a$.

Firstly, we see that
\begin{equation*}
\sum_{j\geq j_{0}-1}\sup_{t\in[0,2]}\left(\langle t\rangle^{\alpha}\int_{0}^{t}e^{-c_{0}(t-\tau)}2^{j(\frac{d}{p}-1)}Z_{j}(\tau)d\tau\right)
\lesssim \int_{0}^{2}\sum_{j\geq j_{0}-1}2^{j(\frac{d}{p}-1)}Z_{j}(\tau)d\tau.
\end{equation*}
Furthermore, with the aid of Propositions \ref{Prop2.2} and \ref{Prop2.5}, we can obtain
\begin{equation} \label{Eq:3.29}
\int_{0}^{2}\sum_{j\geq j_{0}-1}2^{j(\frac{d}{p}-1)}Z_{j}(\tau)d\tau
\lesssim \int_{0}^{2}\left(\left\|\left(au,g\right)\right\|^{h}_{\dot{B}^{\frac{d}{p}-1}_{p,1}}
+\left\|\nabla u\right\|_{\dot{B}^{\frac{d}{p}}_{p,1}}\left\|a\right\|_{\dot{B}^{\frac{d}{p}}_{p,1}}\right)d\tau.
\end{equation}
It is obvious that the last term of the r.h.s. of \eqref{Eq:3.29} may be bounded by $C\mathcal{E}^{2}_{p}(2)$ and that, due to Proposition \ref{Prop2.2}, we easily get
\begin{equation*}
\left\|au\right\|_{L_{t}^{1}(\dot{B}_{p,1}^{\frac {d}{p}-1})}^{h}
\lesssim \left\|a\right\|_{L_{t}^{1} (\dot{B}_{p,1}^{\frac {d}{p}})}\left\|u\right\|_{L_{t}^{\infty} (\dot{B}_{p,1}^{\frac {d}{p}-1})}.
\end{equation*}
Additionally, applying Propositions  \ref{Prop2.2} and \ref{Prop2.4} yields
\begin{eqnarray*}
\left\|g\right\|_{L_{t}^{1}(\dot{B}_{p,1}^{\frac {d}{p}-1})}^{h}
&\lesssim& \left(\left\|u\right\|_{L_{t}^{\infty} (\dot{B}_{p,1}^{\frac {d}{p}-1})}\left\|\nabla u\right\|_{L_{t}^{1} (\dot{B}_{p,1}^{\frac {d}{p}})}+\left\|a\right\|_{L_{t}^{\infty} (\dot{B}_{p,1}^{\frac {d}{p}})}\left\|\nabla u\right\|_{L_{t}^{1} (\dot{B}_{p,1}^{\frac {d}{p}})}\right.\\
&&\left.+\left\|a\right\|_{L_{t}^{\infty} (\dot{B}_{p,1}^{\frac {d}{p}})}\left\|\nabla a\right\|_{L_{t}^{1} (\dot{B}_{p,1}^{\frac {d}{p}-1})}\right),
\end{eqnarray*}
Hence, using the definition of $\mathcal{E}_{p}(t)$,
we can conclude that the first term of the r.h.s. of \eqref{Eq:3.29} may be bounded by C$\mathcal{E}^{2}_{p}(2)$.
So we finally conclude that
\begin{equation}\label{Eq:3.30}
\sum_{j\geq j_{0}-1}\sup_{t\in[0,2]}\left(\langle t\rangle^{\alpha}\int_{0}^{t}e^{-c_{0}(t-\tau)}2^{j(\frac{d}{p}-1)}Z_{j}(\tau)d\tau\right)
\lesssim \mathcal{E}^{2}_{p}(2).
\end{equation}
Secondly, let us bound the supremum for $2\leq t\leq T$ in the last term of \eqref{Eq:3.28}. To do this, one can split the integral on $[0,t]$ into integrals on $[0,1]$ and $[1,t]$. The $[0,1]$ part of the integral
can be handled by C$\mathcal{E}^{2}_{p}(1)$ exactly as the supremum on $[0,2]$ treated before, see \cite{DX} for more details.
In order to bound the integral on $[1,t]$ for $2\leq t\leq T$, we start from
\begin{equation}\label{Eq:3.31}
\sum_{j\geq j_{0}-1}\sup_{t\in[2,T]}\left(\langle t\rangle^{\alpha}\int_{1}^{t}e^{-c_{0}(t-\tau)}2^{j(\frac{d}{p}-1)}Z_{j}(\tau)d\tau\right)
\lesssim \sum_{j\geq j_{0}-1} 2^{j(\frac{d}{p}-1)}\sup_{t\in[1,T]}t^{\alpha}Z_{j}(t).
\end{equation}
To handle the contribution of $Z^{1}_{j}$ and $Z^{2}_{j}$ in \eqref{Eq:3.28}, we notice that
\begin{equation}\label{Eq:3.32}
\sum_{j\geq j_{0}-1}2^{j(\frac{d}{p}-1)}\sup_{t \in[1,T]} t^{\alpha}\left(Z^{1}_{j}(t)+Z^{2}_{j}(t)\right)
\lesssim \left\|t^{\alpha}(au,g)\right\|^{h}_{\tilde{L}^{\infty}_{T}(\dot{B}^{\frac{d}{p}-1}_{p,1})}.
\end{equation}
For the term with $au$, we use the decomposition
\begin{equation*}
au=au^{h}+a^{h}u^{\ell}+a^{\ell}u^{\ell}.
\end{equation*}
Product laws in Proposition \ref{Prop2.2} adapted to tilde spaces ensure that
\begin{eqnarray*}
&&\left\|t^{\alpha}au^{h}\right\|^{h}_{\tilde{L}^{\infty}_{T}(\dot{B}^{\frac{d}{p}-1}_{p,1})}
\lesssim\left\|a\right\|_{\tilde{L}^{\infty}_{T}(\dot{B}^{\frac{d}{p}}_{p,1})}
\left\|t^{\alpha}u^{h}\right\|_{\tilde{L}^{\infty}_{T}(\dot{B}^{\frac{d}{p}-1}_{p,1})}
\lesssim\mathcal{E}_{p}(T)\mathcal{D}_{p}(T),\\
&&\left\|t^{\alpha}a^{h}u^{\ell}\right\|^{h}_{\tilde{L}^{\infty}_{T}(\dot{B}^{\frac{d}{p}-1}_{p,1})}
\lesssim\left\|t^{\alpha}a^{h}\right\|_{\tilde{L}^{\infty}_{T}(\dot{B}^{\frac{d}{p}}_{p,1})}
\left\|u^{\ell}\right\|_{\tilde{L}^{\infty}_{T}(\dot{B}^{\frac{d}{p}-1}_{p,1})}
\lesssim\mathcal{D}_{p}(T)\mathcal{E}_{p}(T).
\end{eqnarray*}
In what follows, we claim the following two inequalities
\begin{equation}\label{Eq:3.33}
\left\|t^{\frac{s_{1}}{2}+\frac{d}{4}+\frac{1}{2}-\frac{\varepsilon}{2}} (a^{\ell},\nabla u^{\ell})(\tau)\right\|_{\tilde{L}^{\infty}_{T}(\dot{B}^{\frac{d}{p}}_{p,1})}
\lesssim\mathcal{D}_{p}(T), \ \left\|t^{\frac{s_{1}}{2}+\frac{d}{4}-\frac{\varepsilon}{2}}u^{\ell}(\tau)\right\|_{\tilde{L}^{\infty}_{T}(\dot{B}^{\frac{d}{p}}_{p,1})}
\lesssim\mathcal{D}_{p}(T).
\end{equation}
Indeed, it follows from the embedding, the definition of $\mathcal{D}_{p}(t)$ and tilde norms that
\begin{eqnarray*}
 \left\|t^{\frac{s_{1}}{2}+\frac{d}{4}+\frac{1}{2}-\frac{\varepsilon}{2}} (a^{\ell},\nabla u^{\ell})(\tau)\right\|_{\tilde{L}^{\infty}_{T}(\dot{B}^{\frac{d}{p}}_{p,1})}
&\lesssim&
 \left\|t^{\frac{s_{1}}{2}+\frac{d}{4}+\frac{1}{2}-\frac{\varepsilon}{2}} (a^{\ell},\nabla u^{\ell})(\tau)\right\|_{\tilde{L}^{\infty}_{T}(\dot{B}^{\frac{d}{2}}_{2,1})}\\
&\lesssim&  \left\|\langle t\rangle^{\frac{s_{1}}{2}+\frac{d}{4}+\frac{1}{2}-\frac{\varepsilon}{2}} (a^{\ell},\nabla u^{\ell})(\tau)\right\|_{L^{\infty}_{T}(\dot{B}^{\frac{d}{2}-\varepsilon}_{2,1})}\\
&\lesssim&\mathcal{D}_{p}(T),\\
\left\|t^{\frac{s_{1}}{2}+\frac{d}{4}-\frac{\varepsilon}{2}}u^{\ell}(\tau)\right\|_{\tilde{L}^{\infty}_{T}(\dot{B}^{\frac{d}{p}}_{p,1})}
&\lesssim &\left\|t^{\frac{s_{1}}{2}+\frac{d}{4}-\frac{\varepsilon}{2}}u^{\ell}(\tau)\right\|_{\tilde{L}^{\infty}_{T}(\dot{B}^{\frac{d}{2}}_{2,1})}\\
&\lesssim & \left\|\langle t\rangle^{\frac{s_{1}}{2}+\frac{d}{4}-\frac{\varepsilon}{2}} u^{\ell}(\tau)\right\|_{L^{\infty}_{T}(\dot{B}^{\frac{d}{2}-\varepsilon}_{2,1})}\lesssim\mathcal{D}_{p}(T).
\end{eqnarray*}
By using \eqref{Eq:3.33} and Proposition \ref{Prop2.2}, we obtain
\begin{eqnarray*}
\left\|t^{\alpha}a^{\ell}u^{\ell}\right\|^{h}_{\tilde{L}^{\infty}_{T}(\dot{B}^{\frac{d}{p}-1}_{p,1})}
&\lesssim& \left\|t^{\alpha}a^{\ell}u^{\ell}\right\|^{h}_{\tilde{L}^{\infty}_{T}(\dot{B}^{\frac{d}{p}}_{p,1})}\\
&\lesssim& \left\|t^{\frac{s_{1}}{2}+\frac{d}{4}+\frac{1}{2}-\frac{\varepsilon}{2}} a^{\ell}\right\|_{\tilde{L}^{\infty}_{T}(\dot{B}^{\frac{d}{p}}_{p,1})}
\left\|t^{\frac{s_{1}}{2}+\frac{d}{4}-\frac{\varepsilon}{2}}u^{\ell}\right\|_{\tilde{L}^{\infty}_{T}(\dot{B}^{\frac{d}{p}}_{p,1})}
 \lesssim  \mathcal{D}^{2}_{p}(T).
\end{eqnarray*}
With the aid of Proposition \ref{Prop2.2}, \eqref{Eq:3.33}, the definition of $\mathcal{D}_{p}(t)$ and tilde norms, we arrive at
\begin{eqnarray*}
\left\|t^{\alpha}(u\cdot \nabla u^{h})\right\|^{h}_{\tilde{L}^{\infty}_{T}(\dot{B}^{\frac{d}{p}-1}_{p,1})}
&\lesssim& \left\|u\right\|_{\tilde{L}^{\infty}_{T}(\dot{B}^{\frac{d}{p}-1}_{p,1})}
\left\|t^{\alpha} \nabla u^{h}\right\|_{\tilde{L}^{\infty}_{T}(\dot{B}^{\frac{d}{p}}_{p,1})}
\lesssim \mathcal{E}_{p}(T)\mathcal{D}_{p}(T),\\
\left\|t^{\alpha}(u^{h}\cdot \nabla u^{\ell})\right\|^{h}_{\tilde{L}^{\infty}_{T}(\dot{B}^{\frac{d}{p}-1}_{p,1})}
&\lesssim& \left\|t^{\frac{s_{1}}{2}+\frac{d}{4}-\frac{\varepsilon}{2}} u^{h}\right\|_{\tilde{L}^{\infty}_{T}(\dot{B}^{\frac{d}{p}-1}_{p,1})}
\left\|t^{\frac{s_{1}}{2}+\frac{d}{4}+\frac{1}{2}-\frac{\varepsilon}{2}} \nabla u^{\ell}\right\|_{\tilde{L}^{\infty}_{T}(\dot{B}^{\frac{d}{p}}_{p,1})}\\
&\lesssim& \mathcal{D}^{2}_{p}(T)
\end{eqnarray*}
and
\begin{eqnarray*}
\left\|t^{\alpha}(u^{\ell}\cdot \nabla u^{\ell})\right\|^{h}_{\tilde{L}^{\infty}_{T}(\dot{B}^{\frac{d}{p}-1}_{p,1})}
&\lesssim&\left\|t^{\alpha}(u^{\ell}\cdot \nabla u^{\ell})\right\|^{h}_{\tilde{L}^{\infty}_{T}(\dot{B}^{\frac{d}{p}}_{p,1})}\\
&\lesssim& \left\|t^{\frac{s_{1}}{2}+\frac{d}{4}-\frac{\varepsilon}{2}}u^{\ell}\right\|_{\tilde{L}^{\infty}_{T}(\dot{B}^{\frac{d}{p}}_{p,1})}
\left\|t^{\frac{s_{1}}{2}+\frac{d}{4}+\frac{1}{2}-\frac{\varepsilon}{2}} \nabla u^{\ell}\right\|_{\tilde{L}^{\infty}_{T}(\dot{B}^{\frac{d}{p}}_{p,1})}\\
&\lesssim &\mathcal{D}^{2}_{p}(T).
\end{eqnarray*}
In addition, we shall use the fact that, owing to the definition of $\mathcal{D}_{p}(t)$ and $\alpha>\frac{s_{1}}{2}+\frac{d}{4}-\frac{\varepsilon}{2}$ for small enough $\varepsilon>0$,
\begin{eqnarray}\label{Eq:3.34}
\left\|t^{\frac{s_{1}}{2}+\frac{d}{4}-\frac{\varepsilon}{2}}a\right\|_{\tilde{L}^{\infty}_{T}(\dot{B}^{\frac{d}{p}}_{p,1})}
&\lesssim &\left\| t^{\frac{s_{1}}{2}+\frac{d}{4}-\frac{\varepsilon}{2}}a^{\ell}\right\|_{\tilde{L}^{\infty}_{T}(\dot{B}^{\frac{d}{p}}_{p,1})}
+\left\| t^{\frac{s_{1}}{2}+\frac{d}{4}-\frac{\varepsilon}{2}}a^{h}\right\|_{\tilde{L}^{\infty}_{T}(\dot{B}^{\frac{d}{p}}_{p,1})}\nonumber\\
&\lesssim &\left\|\langle t\rangle^{\frac{s_{1}}{2}+\frac{d}{4}+\frac{1}{2}-\frac{\varepsilon}{2}}a^{\ell}\right\|_{L^{\infty}_{T}(\dot{B}^{\frac{d}{2}-\varepsilon}_{2,1})}
+\left\|\langle t\rangle ^{\alpha}a^{h}\right\|_{\tilde{L}^{\infty}_{T}(\dot{B}^{\frac{d}{p}}_{p,1})}\nonumber\\
&\lesssim&  \mathcal{D}_{p}(T).
\end{eqnarray}
To bound the term containing $k(a)\nabla a$, we take advantage of Propositions \ref{Prop2.2}, \ref{Prop2.4} and \eqref{Eq:3.33}, \eqref{Eq:3.34}, and get
\begin{eqnarray*}
\left\|t^{\alpha}k( a) \nabla a^{\ell}\right\|^{h}_{\tilde{L}^{\infty}_{T}(\dot{B}^{\frac{d}{p}-1}_{p,1})}
&\lesssim &\left\|t^{\frac{s_{1}}{2}+\frac{d}{4}-\frac{\varepsilon}{2}}a\right\|_{\tilde{L}^{\infty}_{T}(\dot{B}^{\frac{d}{p}}_{p,1})}
\left\|t^{\frac{s_{1}}{2}+\frac{d}{4}+\frac{1}{2}-\frac{\varepsilon}{2}} \nabla a^{\ell}\right\|_{\tilde{L}^{\infty}_{T}(\dot{B}^{\frac{d}{p}-1}_{p,1})}\\
&\lesssim &\mathcal{D}^{2}_{p}(T),\\
\left\|t^{\alpha}k( a) \nabla a^{h}\right\|^{h}_{\tilde{L}^{\infty}_{T}(\dot{B}^{\frac{d}{p}-1}_{p,1})}
&\lesssim &\left\|a\right\|_{\tilde{L}^{\infty}_{T}(\dot{B}^{\frac{d}{p}}_{p,1})}
\left\|t^{\alpha} \nabla a^{h}\right\|_{\tilde{L}^{\infty}_{T}(\dot{B}^{\frac{d}{p}-1}_{p,1})}
\lesssim \mathcal{E}_{p}(T)\mathcal{D}_{p}(T).
\end{eqnarray*}
The term $g_{3}(a,u)$ of $g$ is of the type $H(a)\nabla^{2} a$ with $H(a)=0$, and the term $g_{4}(a,u)$ of $g$ is of the type $\nabla K(a)\otimes \nabla u$ with $K(0)=0$. Consequently, we write that thanks to Propositions \ref{Prop2.2} and \ref{Prop2.4}, and \eqref{Eq:3.33}, \eqref{Eq:3.34},
\begin{eqnarray*}
&&\left\|t^{\alpha}H(a)\nabla^{2}u^{\ell}\right\|^{h}_{\tilde{L}^{\infty}_{T}(\dot{B}^{\frac{d}{p}-1}_{p,1})}
+\left\|t^{\alpha}\nabla K(a)\otimes\nabla u^{\ell}\right\|^{h}_{\tilde{L}^{\infty}_{T}(\dot{B}^{\frac{d}{p}-1}_{p,1})}\\
&\lesssim&\left\|t^{\frac{s_{1}}{2}+\frac{d}{4}-\frac{\varepsilon}{2}}a\right\|_{\tilde{L}^{\infty}_{T}(\dot{B}^{\frac{d}{p}}_{p,1})}
\left\|t^{\frac{s_{1}}{2}+\frac{d}{4}+\frac{1}{2}-\frac{\varepsilon}{2}} \nabla u^{\ell}\right\|_{\tilde{L}^{\infty}_{T}(\dot{B}^{\frac{d}{p}}_{p,1})}
\lesssim \mathcal{D}^{2}_{p}(T),\\
&&\left\|t^{\alpha}H(a)\nabla^{2}u^{h}\right\|^{h}_{\tilde{L}^{\infty}_{T}(\dot{B}^{\frac{d}{p}-1}_{p,1})}
+\left\|t^{\alpha}\nabla K(a)\otimes\nabla u^{h}\right\|^{h}_{\tilde{L}^{\infty}_{T}(\dot{B}^{\frac{d}{p}-1}_{p,1})}\\
&\lesssim&\left\|a\right\|_{\tilde{L}^{\infty}_{T}(\dot{B}^{\frac{d}{p}}_{p,1})}
\left\|t^{\alpha} \nabla u^{h}\right\|_{\tilde{L}^{\infty}_{T}(\dot{B}^{\frac{d}{p}}_{p,1})}
\lesssim \mathcal{E}_{p}(T)\mathcal{D}_{p}(T).
\end{eqnarray*}
Reverting to \eqref{Eq:3.32}, we end up with
\begin{equation*}
\sum_{j\geq j_{0}-1}2^{j(\frac{d}{p}-1)}\sup_{t \in[1,T]} t^{\alpha}\left(Z^{1}_{j}(t)+Z^{2}_{j}(t)\right)
\lesssim \mathcal{D}_{p}(T)\mathcal{E}_{p}(T)+\mathcal{D}^{2}_{p}(T).
\end{equation*}
The term with $Z^{3}_{j}$ is similar to the terms $g_{3}(a,u)$ and $g_{4}(a,u)$ of $g$. In fact, owing to \eqref{Eq:3.33} and \eqref{Eq:3.34}, we have
\begin{eqnarray*}
\sum_{j\geq j_{0}-1}2^{j(\frac{d}{p}-1)}\sup_{t \in[1,T]} t^{\alpha} Z^{3}_{j}(t)
&\lesssim&\left\|t^{\frac{s_{1}}{2}+\frac{d}{4}-\frac{\varepsilon}{2}}a\right\|_{\tilde{L}^{\infty}_{T}(\dot{B}^{\frac{d}{p}}_{p,1})}
\left\|t^{\frac{s_{1}}{2}+\frac{d}{4}+\frac{1}{2}-\frac{\varepsilon}{2}} \mathrm{div}\,u^{\ell}\right\|_{\tilde{L}^{\infty}_{T}(\dot{B}^{\frac{d}{p}}_{p,1})} \\
&&+\left\|a\right\|_{\tilde{L}^{\infty}_{T}(\dot{B}^{\frac{d}{p}}_{p,1})}
\left\|t^{\alpha} \mathrm{div}\,u^{h}\right\|_{\tilde{L}^{\infty}_{T}(\dot{B}^{\frac{d}{p}}_{p,1})} \\
&\lesssim &\mathcal{D}^{2}_{p}(T)+\mathcal{E}_{p}(T)\mathcal{D}_{p}(T).
\end{eqnarray*}
For the term $Z^{4}_{j}$, we use a small modification
of the commutator estimate as in Proposition \ref{Prop2.5} (just include $t^{\alpha}$ in the definition of the
commutator, follow the proof treating the time variable as a parameter, and take the supremum
on $[0,T]$ at the end) and get
\begin{eqnarray}
\sum_{j\geq j_{0}-1}2^{j(\frac{d}{p}-1)}\sup_{t\in [0,T]} t^{\alpha}\left\|R_{j}(t)\right\|_{L^{p}}
&\lesssim&\left\|t^{\frac{s_{1}}{2}+\frac{d}{4}+\frac{1}{2}-\frac{\varepsilon}{2}} \nabla u^{\ell}\right\|_{\tilde{L}^{\infty}_{T}(\dot{B}^{\frac{d}{p}}_{p,1})}
\left\|t^{\frac{s_{1}}{2}+\frac{d}{4}-\frac{\varepsilon}{2}} a\right\|_{\tilde{L}^{\infty}_{T}(\dot{B}^{\frac{d}{p}}_{p,1})} \nonumber \\
\label{Eq:3.36}
&&+\left\|t^{\alpha} \nabla u^{h}\right\|_{\tilde{L}^{\infty}_{T}(\dot{B}^{\frac{d}{p}}_{p,1})}
\left\|a\right\|_{\tilde{L}^{\infty}_{T}(\dot{B}^{\frac{d}{p}}_{p,1})}.
\end{eqnarray}
Hence using \eqref{Eq:3.33}, \eqref{Eq:3.34} and the definitions of $\mathcal{E}(t)$ and $\mathcal{D}(t)$ give
\begin{equation*}
\sum_{j\geq j_{0}-1}2^{j(\frac{d}{p}-1)}\sup_{t\in [0,T]} t^{\alpha}\left\|R_{j}(t)\right\|_{L^{p}}
\lesssim \mathcal{D}^{2}_{p}(T)+\mathcal{D}_{p}(T)\mathcal{E}_{p}(T).
\end{equation*}
The term with $Z^{5}_{j}$ is clearly handled by the r.h.s. of \eqref{Eq:3.36}.
Putting all the above estimates together, we discover that
\begin{equation}\label{Eq:3.37}
\sum_{j\geq j_{0}-1}2^{j(\frac{d}{p}-1)}\sup_{t \in[1,T]} t^{\alpha}Z_{j}(t)
\lesssim \mathcal{E}_{p}(T)\mathcal{D}_{p}(T)+\mathcal{D}^{2}_{p}(T).
\end{equation}
Plugging \eqref{Eq:3.37} in \eqref{Eq:3.31}, and remembering \eqref{Eq:3.30} and \eqref{Eq:3.28}, we end up with \eqref{Eq:3.27}.
 This completes the proof of Proposition \ref{Prop3.2}.
\end{proof}
\subsection{Third Step: Decay  and  Gain  of  Regularity  for  the  High  Frequencies  of  $u$ }
The last step is devoted to handle the third norm in the functional $\mathcal{D}_{p}(t)$ and close the
whole time-weighted inequality. Precisely, one has
\begin{prop}\label{Prop3.3}
If $p$ fulfills \eqref{Eq:1.5}, then it holds that for all $t\geq0$,
\begin{equation}\label{Eq:3.38}
\left\|t^{\alpha}\nabla u\right\|^{h}_{\tilde{L}^{\infty}_{t}(\dot{B}^{\frac{d}{p}}_{p,1})}
\lesssim \left\|\left(\nabla a_{0},u_{0}\right)\right\|^{h}_{\dot{B}^{\frac{d}{p}-1}_{p,1}}
+\mathcal{E}^{2}_{p}(t)+\mathcal{D}^{2}_{p}(t)
\end{equation}
with $\alpha=s_{1}+\frac d2+\frac 12-\varepsilon$ for sufficiently $\varepsilon>0$, where $\mathcal{E}_{p}(T)$ and $\mathcal{D}_{p}(T)$ are defined in \eqref{Eq:1.6} and \eqref{Eq:1.11}, respectively.
\end{prop}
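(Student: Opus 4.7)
The plan is to exploit the parabolic smoothing of the Lam\'e semigroup at high frequencies to gain one full derivative on the velocity, while carrying along the time weight $t^\alpha$ by means of a kernel decomposition. First I would rewrite the velocity equation from \eqref{Eq:1.7} as $\partial_t u-\mathcal{A}u=F$, where
$$F\triangleq g-\nabla a-\nabla(-\Delta)^{-1}a,$$
and localize with $\dot{\Delta}_j$ for $j\geq j_0-1$. Using Remark \ref{Rem2.2} together with the classical dyadic decay of the heat semigroup yields the pointwise-in-$j$ Duhamel bound
$$\|\dot{\Delta}_j u(t)\|_{L^p}\lesssim e^{-c_0 2^{2j}t}\|\dot{\Delta}_j u_0\|_{L^p}+\int_0^t e^{-c_0 2^{2j}(t-\tau)}\|\dot{\Delta}_j F(\tau)\|_{L^p}\,d\tau.$$

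Next I would multiply by $t^\alpha$ and use the elementary inequality $t^\alpha\leq C_\alpha((t-\tau)^\alpha+\tau^\alpha)$ to split the integral into two convolutions. Young's inequality in time applied to the $L^1$ kernels $s^\alpha e^{-c_0 2^{2j}s}$ and $e^{-c_0 2^{2j}s}$, with respective masses $\sim 2^{-2j(\alpha+1)}$ and $\sim 2^{-2j}$, together with the bound $\sup_t t^\alpha e^{-c_0 2^{2j}t}\lesssim 2^{-2j\alpha}$ for the initial-data contribution, would give
$$\|t^\alpha\dot{\Delta}_j u\|_{L^\infty_t(L^p)}\lesssim 2^{-2j\alpha}\|\dot{\Delta}_j u_0\|_{L^p}+2^{-2j(\alpha+1)}\|\dot{\Delta}_j F\|_{L^\infty_t(L^p)}+2^{-2j}\|\tau^\alpha \dot{\Delta}_j F\|_{L^\infty_t(L^p)}.$$
After multiplying by $2^{j(d/p+1)}$ and summing over $j\geq j_0-1$, noting that $\alpha>1$ (which is guaranteed by $s_1>1-d/2$ and $\varepsilon$ small enough) makes the factors $2^{j(2-2\alpha)}$ and $2^{-2j\alpha}$ bounded at high frequencies, one arrives at the master inequality
$$\|t^\alpha\nabla u\|^h_{\widetilde{L}^\infty_t(\dot{B}^{d/p}_{p,1})}\lesssim \|u_0\|^h_{\dot{B}^{d/p-1}_{p,1}}+\|F\|^h_{\widetilde{L}^\infty_t(\dot{B}^{d/p-1}_{p,1})}+\|\tau^\alpha F\|^h_{\widetilde{L}^\infty_t(\dot{B}^{d/p-1}_{p,1})}.$$

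It then remains to bound the three pieces of $F$. For the linear contributions, $\|\nabla a\|^h_{\widetilde{L}^\infty(\dot{B}^{d/p-1}_{p,1})}$ is controlled by $\mathcal{E}_p(t)$ by definition, and since $\nabla(-\Delta)^{-1}$ is of order $-1$ the nonlocal term is harmless at high frequencies and dominated by the same bound; their $\tau^\alpha$-weighted counterparts are controlled by $\mathcal{D}_p(t)$ thanks to Proposition \ref{Prop3.2}. For the nonlinear source $g$, the four constituent pieces $-u\cdot\nabla u$, $-k(a)\nabla a$, $g_3(a,u)$ and $g_4(a,u)$ are bounded by the same low-/high-frequency splittings and product and composition inequalities (Propositions \ref{Prop2.2} and \ref{Prop2.4}) already used for the $Z^2_j$ term in the proof of Proposition \ref{Prop3.2}. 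The unweighted piece yields $\|g\|^h_{\widetilde{L}^\infty_t(\dot{B}^{d/p-1}_{p,1})}\lesssim \mathcal{E}_p^2(t)$, while the $\tau^\alpha$-weighted piece yields $\|\tau^\alpha g\|^h_{\widetilde{L}^\infty_t(\dot{B}^{d/p-1}_{p,1})}\lesssim \mathcal{E}_p(t)\mathcal{D}_p(t)+\mathcal{D}_p^2(t)$ via the tilde-norm controls of the type \eqref{Eq:3.33}--\eqref{Eq:3.34}.

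The main technical hurdle will be ensuring that the $\tau^\alpha$-weighted nonlinear bilinear estimates close: the time weight $\alpha$ must be split between the two factors exactly as in Proposition \ref{Prop3.2}, for instance through decompositions of the form $\alpha=(s_1/2+d/4+1/2-\varepsilon/2)+(s_1/2+d/4-\varepsilon/2)$ so that each factor receives a decay rate consistent with the definition of $\mathcal{D}_p(t)$, and one must repeatedly invoke the embedding $\dot{B}^{d/p}_{p,1}\hookleftarrow\dot{B}^{d/2-\varepsilon}_{2,1}$ to transfer low-frequency $L^2$-bounds to $L^p$-bounds with the appropriate weight. Since the entire bookkeeping has already been performed at the one-lower regularity level in the proof of Proposition \ref{Prop3.2}, the argument reduces to replaying the same pattern at the $\dot{B}^{d/p-1}_{p,1}$ source level, and concatenation with the convolutional master inequality above delivers \eqref{Eq:3.38}.
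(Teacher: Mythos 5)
Your overall strategy --- localizing with $\dot{\Delta}_j$, using the dyadic decay $e^{-c_02^{2j}t}$ of the Lam\'e semigroup, and trading the kernel's mass $2^{-2j(\alpha+1)}$ for one extra derivative on $u$ --- is a legitimate alternative to the paper's argument, and your kernel computations (initial-data factor $2^{-2j\alpha}$, Young's inequality for the two convolution kernels) are correct. The paper proceeds differently: it observes that $v=t^{\alpha}\mathcal{A}u$ solves the Lam\'e system with \emph{zero} initial data and source $\alpha t^{\alpha-1}\mathcal{A}u+t^{\alpha}\mathcal{A}\left(g-\nabla a-\nabla(-\Delta)^{-1}a\right)$, and applies the $\tilde{L}^{\infty}$-in-time maximal regularity of Remark \ref{Rem2.2}, so that \emph{every} source term automatically carries a positive power of $t$; the weighted linear terms are then absorbed through \eqref{Eq:3.27} and $t^{\alpha}g$ through the Step-2 machinery. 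That difference is not cosmetic, and it is precisely where your version develops a gap.

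The problem is the unweighted norm $\left\|F\right\|^{h}_{\tilde{L}^{\infty}_{t}(\dot{B}^{d/p-1}_{p,1})}$ produced by the $(t-\tau)^{\alpha}$ half of your splitting. First, its linear part gives $\left\|\nabla a\right\|^{h}_{\tilde{L}^{\infty}_{t}(\dot{B}^{d/p-1}_{p,1})}\lesssim\mathcal{E}_{p}(t)$ \emph{linearly}, which is not of the form $\left\|(\nabla a_{0},u_{0})\right\|^{h}_{\dot{B}^{d/p-1}_{p,1}}+\mathcal{E}^{2}_{p}(t)+\mathcal{D}^{2}_{p}(t)$, so \eqref{Eq:3.38} as stated is not obtained (it would only yield the weaker bound with an extra $\mathcal{E}_{p,0}$-type term). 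Second, and more seriously, the claim $\left\|g\right\|^{h}_{\tilde{L}^{\infty}_{t}(\dot{B}^{d/p-1}_{p,1})}\lesssim\mathcal{E}^{2}_{p}(t)$ is unsupported: for $u\cdot\nabla u^{h}$, any product law at this regularity forces a factor $\left\|\nabla u^{h}\right\|_{\tilde{L}^{\infty}_{t}(\dot{B}^{d/p}_{p,1})}$ or $\left\|u^{h}\right\|_{\tilde{L}^{\infty}_{t}(\dot{B}^{d/p}_{p,1})}$, and $\mathcal{E}_{p}$ only controls the $L^{1}$-in-time version of that quantity while $\mathcal{D}_{p}$ only controls its $t^{\alpha}$-weighted version --- the unweighted $\tilde{L}^{\infty}$ one is essentially the object Proposition \ref{Prop3.3} is trying to estimate. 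Both defects are reparable within your framework: for the linear sources use $t^{\alpha}\lesssim\bigl((t-\tau)^{\alpha}+1\bigr)\langle\tau\rangle^{\alpha}$ so that only $\left\|\langle\tau\rangle^{\alpha}(\nabla a,u)\right\|^{h}_{\tilde{L}^{\infty}_{t}(\dot{B}^{d/p-1}_{p,1})}$ appears and is absorbed by \eqref{Eq:3.27}; for the $(t-\tau)^{\alpha}$ part of the nonlinear source, pair the kernel in $L^{\infty}$ (mass $\sim2^{-2j\alpha}$, still summable against $2^{j(d/p+1)}$ at high frequency since $\alpha>1$) with $g$ in $L^{1}$-in-time, which gives $\left\|g\right\|^{h}_{L^{1}_{t}(\dot{B}^{d/p-1}_{p,1})}\lesssim\mathcal{E}^{2}_{p}(t)$ exactly as in the $[0,2]$ estimate inside the proof of Proposition \ref{Prop3.2}. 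With those two adjustments your route closes; as written, it does not prove \eqref{Eq:3.38}.
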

\begin{proof}
In comparison with \cite{DX}, a new observation is involved in the proof, which enables us to obtain more decay in time for the improved regularity of velocity.
Precisely, we see that the velocity $u$ fulfills
\begin{equation}\label{Eq:3.39}
\partial _{t}u-\mathcal{A}u=g-\nabla a-\nabla (-\Delta)^{-1}a.
\end{equation}
To obtain the desired estimates, we reformulate \eqref{Eq:3.39} as follows:
\begin{equation*}
\left\{
\begin{array}{l}
\partial _{t}\left(t^{\alpha}\mathcal{A}u\right)-\mathcal{A}\left(t^{\alpha}\mathcal{A}u\right)=\alpha t^{\alpha-1}\mathcal{A}u+t^{\alpha}\mathcal{A}g\\
\hspace{2cm}-t^{\alpha}\mathcal{A}\nabla a-t^{\alpha}\mathcal{A} \nabla (-\Delta)^{-1}a, \\
t^{\alpha}\mathcal{A}u|_{t=0}=0.
\end{array}
\right.
\end{equation*}
According to the smoothing property of Lam\'{e} semi-group in Remark \ref{Rem2.2}, we have for $j\geq j_{0}-1$,
\begin{eqnarray*}
\left\|\tau^{\alpha} \nabla^{2}u\right\|^{h}_{\tilde{L}^{\infty}_{t}(\dot{B}^{\frac{d}{p}-1}_{p,1})}
&\lesssim&\left\|\tau^{\alpha}g\right\|^{h}_{\tilde{L}^{\infty}_{t}(\dot{B}^{\frac{d}{p}-1}_{p,1})}
+\left\|\tau^{\alpha-1}u\right\|^{h}_{\tilde{L}^{\infty}_{t}(\dot{B}^{\frac{d}{p}-1}_{p,1})}\\
&&+\left\|\tau^{\alpha} \nabla a\right\|^{h}_{\tilde{L}^{\infty}_{t}(\dot{B}^{\frac{d}{p}-1}_{p,1})}
+\left\|\tau^{\alpha} \nabla a\right\|^{h}_{\tilde{L}^{\infty}_{t}(\dot{B}^{\frac{d}{p}-3}_{p,1})}.
\end{eqnarray*}
As $\alpha>1$, we discover that
\begin{eqnarray*}
&&\left\|\tau^{\alpha-1}u\right\|^{h}_{\tilde{L}^{\infty}_{t}(\dot{B}^{\frac{d}{p}-1}_{p,1})}
\lesssim \left\|\langle\tau\rangle^{\alpha}u\right\|^{h}_{\tilde{L}^{\infty}_{t}(\dot{B}^{\frac{d}{p}-1}_{p,1})},\\
&&\left\|\tau^{\alpha} \nabla a\right\|^{h}_{\tilde{L}^{\infty}_{t}(\dot{B}^{\frac{d}{p}-1}_{p,1})}
+\left\|\tau^{\alpha} \nabla a\right\|^{h}_{\tilde{L}^{\infty}_{t}(\dot{B}^{\frac{d}{p}-3}_{p,1})}
\lesssim \left\|\langle\tau\rangle^{\alpha}\nabla a\right\|^{h}_{\tilde{L}^{\infty}_{t}(\dot{B}^{\frac{d}{p}-1}_{p,1})}.
\end{eqnarray*}
Furthermore, we arrive at
\begin{eqnarray*}
\left\|\tau^{\alpha} \nabla u\right\|^{h}_{\tilde{L}^{\infty}_{t}(\dot{B}^{\frac{d}{p}}_{p,1})}
\lesssim\left\|\tau^{\alpha}g\right\|^{h}_{\tilde{L}^{\infty}_{t}(\dot{B}^{\frac{d}{p}-1}_{p,1})}
+\left\|\langle\tau\rangle^{\alpha}(\nabla a,u)\right\|^{h}_{\tilde{L}^{\infty}_{t}(\dot{B}^{\frac{d}{p}-1}_{p,1})}.
\end{eqnarray*}
According to \eqref{Eq:3.27}, the last norm can be bounded by
\begin{equation*}
\left\|\left(\nabla a_{0},u_{0}\right)\right\|^{h}_{\dot{B}^{\frac{d}{p}-1}_{p,1}}+\mathcal{E}^{2}_{p}(t)+\mathcal{D}^{2}_{p}(t).
\end{equation*}
Bounding the norm $\left\|\tau^{\alpha}g\right\|^{h}_{\tilde{L}^{\infty}_{t}(\dot{B}^{\frac{d}{p}-1}_{p,1})}$ is exactly same as Step 2, one can deduce that \eqref{Eq:3.38} readily.
\end{proof}
Finally, adding up \eqref{Eq:3.38} to \eqref{Eq:3.26} and \eqref{Eq:3.27} yields for all $T\geq0$,
\begin{equation*}
\mathcal{D}_{p}(T)\lesssim \mathcal{D}_{p,0}
+\left\|\left(\nabla a_{0},u_{0}\right)\right\|^{h}_{\dot{B}^{\frac{d}{p}-1}_{p,1}}
+\mathcal{E}^{2}_{p}(T)+\mathcal{D}^{2}_{p}(T).
\end{equation*}
As Theorem \ref{Thm1.1} implies that $\mathcal{E}_{p}\lesssim\mathcal{E}_{p,0}\ll1$
and as
\begin{equation*}
\left\|\left( a_{0},\nabla u_{0}\right) \right\|_{\dot{B}_{2,1}^{\frac {d}{2}-2}}^{\ell}\lesssim \left\|(\tilde{a}_{0}, u_{0})\right\|^{\ell}_{\dot{B}_{2,\infty}^{-s_{1}}},
\end{equation*}
one can conclude that \eqref{Eq:1.10} is fulfilled for all time if $\mathcal{D}_{p,0}$ and $\left\|\left(\nabla a_{0},u_{0}\right)\right\|^{h}_{\dot{B}^{\frac{d}{p}-1}_{p,1}}$ are small enough. This completes the proof of Theorem \ref{Thm1.2}.

\subsection{The proof of Corollary \ref{Cor1.1}}
\begin{proof}
Recall that for functions with compactly supported Fourier transform, one has the embedding $\dot{B}^{s}_{2,1}\hookrightarrow\dot{B}_{p,1}^{s-d(\frac{1}{2}-\frac{1}{p})}\hookrightarrow\dot{B}^{s}_{p,1}$
for the low frequencies, whenever $p\geq2$. Hence, we may write
\begin{equation*}
\sup_{t\in[0,T]}\langle t\rangle^{\frac{s_{1}+s}{2}}\left\|\Lambda^{s}u\right\|_{\dot{B}^{0}_{p,1}}
\lesssim\left\|\langle t\rangle^{\frac{s_{1}+s}{2}}u\right\|^{\ell}_{L^{\infty}_{T}(\dot{B}^{s}_{2,1})}+\left\|\langle t\rangle^{\frac{s_{1}+s}{2}}u\right\|^{h}_{L^{\infty}_{T}(\dot{B}^{s}_{p,1})}.
\end{equation*}
Taking advantage of \eqref{Eq:1.10} and the definition of functional $\mathcal{D}_{p}(t)$, we see that
\begin{equation*}
\left\|\langle t\rangle^{\frac{s_{1}+s}{2}}u\right\|^{\ell}_{L^{\infty}_{T}(\dot{B}^{s}_{2,1})}
\lesssim\mathcal{D}_{p,0}+\left\|\left(\nabla a_{0},u_{0}\right)\right\|^{h}_{\dot{B}^{\frac{d}{p}-1}_{p,1}} \ \hbox{if} \  -s_{1}<s\leq\frac{d}{2}+1.
\end{equation*}
On the other hand, if $\varepsilon>0$ is sufficiently small, then we have $\frac {s_{1}}{2}+\frac d2-\varepsilon>\frac 12+\frac d4-\varepsilon\geq \frac{d}{2p}$
for $p\geq2$, which ensures that $\alpha\geq\frac{s_{1}+s}{2}$. Consequently, we deduce that  for all $s\leq\frac{d}{p}+1$
\begin{equation*}
\left\|\langle t\rangle^{\frac{s_{1}+s}{2}}u\right\|^{h}_{L^{\infty}_{T}(\dot{B}^{s}_{p,1})}
\lesssim\left\| t^{\frac{s_{1}+s}{2}}u\right\|^{h}_{\tilde{L}^{\infty}_{T}(\dot{B}^{s}_{p,1})}
\lesssim\mathcal{D}_{p,0}+\left\|\left(\nabla a_{0},u_{0}\right)\right\|^{h}_{\dot{B}^{\frac{d}{p}-1}_{p,1}},
\end{equation*}
since one can assume $t\geq1$ in the large time asymptotics. This yields the desired result for velocity $u$. Bounding $a$ works almost the same, except that the choice of derivative index $s\leq\frac{d}{p}$. Hence, the proof of Corollary \ref{Cor1.1} is complete.
\end{proof}
\subsection{The proof of Corollary \ref{Cor1.2}}
\begin{proof}
Let us first recall the following Gagliardo-Nirenberg type inequality in \cite{SS} (or see \cite{BCD,XK1,XK2}):
$$
\|\Lambda ^{\beta}f\|_{L^{r}}\lesssim \|\Lambda^{\beta_{1}}f\|^{1-\theta}_{L^{q}}\|\Lambda^{\beta_{2}}f\|^{\theta}_{L^{q}},
$$
where $0\leq\theta\leq1$, $1\leq q\leq r\leq\infty$ and
$$
\beta+d\left(\frac{1}{q}-\frac{1}{r}\right)=\beta_{1}(1-\theta)+\beta_{2}\theta.
$$
For brevity, it suffices to establish those time-optimal decay rates of $u$.
It follows from Corollary \ref{Cor1.1} with $p=2$, and the Gagliardo-Nirenberg inequality with $q=2$ and $\beta=k$ that
\begin{eqnarray*}
\left\|\Lambda ^{k}u\right\|_{L^{r}}&\lesssim& \left\|\Lambda ^{\beta_{1}}u\right\|^{1-\theta}_{L^{2}}\left\|\Lambda^{\beta _{2}}u\right\|^{\theta}_{L^{2}}\\
&\lesssim& \left(\mathcal{D}_{2,0}+\left\|(\nabla a_{0},u_{0})\right\|^{h}_{\dot{B}^{\frac{d}{2}-1}_{2,1}}\right)
\left\{\langle t\rangle^{-\frac{s_{1}}{2}-\frac{\beta_{1}}{2}}\right\}^{1-\theta}\left\{\langle t\rangle^{-\frac{s_{1}}{2}-\frac{\beta_{2}}{2}}\right\}^{\theta}\\
&=&\left(\mathcal{D}_{2,0}+\left\|(\nabla a_{0},u_{0})\right\|^{h}_{\dot{B}^{\frac{d}{2}-1}_{2,1}}\right)
\langle t\rangle^{-\frac{s_{1}}{2}-\frac{\beta_{1}(1-\theta)+\beta_{2}\theta}{2}}\\
&=&\left(\mathcal{D}_{2,0}+\left\|(\nabla a_{0},u_{0})\right\|^{h}_{\dot{B}^{\frac{d}{2}-1}_{2,1}}\right)
\langle t\rangle^{-\frac{s_{1}}{2}-\frac{d}{2}(\frac{1}{2}-\frac{1}{r})-\frac{k}{2}},
\end{eqnarray*}
where we used that
\begin{equation}\label{Eq:3.40}
k+d\left(\frac{1}{2}-\frac{1}{r}\right)=\beta_{1}(1-\theta)+\beta_{2}\theta
\end{equation}
for $-s_{1}<\beta_{1},\beta_{2}\leq\frac{d}{2}+1$. Furthermore, the derivative indices fulfill
\begin{equation*}
-s_{1}<k+d\left(\frac{1}{2}-\frac{1}{r}\right)\leq\frac{d}{2}+1,
\end{equation*}
so the proof of Corollary \ref{Cor1.2} is finished.
\end{proof}
\section*{Acknowledgments}
The second author (J. Xu) is partially supported by the National
Natural Science Foundation of China (11471158) and the Fundamental Research Funds for the Central
Universities (NE2015005). Last but not least, he is grateful to Professor Hai-Liang Li for the suggestion on this question.


\begin{thebibliography}{99}
\bibitem{BCD}
H.~Bahouri, J.~Y.~Chemin and R.~Danchin, \textit{Fourier analysis and
nonlinear partial differential equations}, Grundlehren der mathematischen
Wissenschaften, Vol. {\bf{343}} (Springer, Berlin 2011).

\bibitem{BWY}
Q.~Y.~Bie, Q.~R.~Wang and Z.~A.~Yao, Optimal decay rate for the
compressible Navier-Stokes-Poisson system in the critical $L^{p}$ framework,
\textit{J. Differential Equations} {\bf{263}} (2017) 8391--8417.

\bibitem{CJY}
J.~Y.~Chemin, Th\'{e}or\`{e}mes d'unicit\'{e} pour le syst\`{e}%
m de Navier-Stokes tridimensionnel, \textit{J. Amal. Math.} {\bf{77}} (1999) 27--50.

\bibitem{CM}
M.~Cannone, A generalization of a theorem by Kato on Navier-Stokes equations, \textit{Rev. Mat.
Iberoamericana} {\bf{13}} (1997) 515--542.

\bibitem{CD1}
F.~Charve and R.~Danchin, A global existence result for the
compressible Navier-Stokes equations in the critical $L^{p}$ framework, \textit{Arch.
Ration. Mech. Anal.} {\bf{198}} (2010) 233--271.

\bibitem{CD2}
N.~Chikami and R.~Danchin, On the global existence and time decay estimates in critical spaces for the Navier-Stokes-Poisson system,
\textit{Math. Nachr.}  {\bf{290}} (2017) 1939-1970.

\bibitem{CL}
J.~Y.~Chemin, N.~Lerner, Flot de champs de vecteurs no lipschitziens
et \'{e}quations de Navier-Stokes, \textit{J. Differential Equations} {\bf{248}} (2010) 2130--2170.

\bibitem{CMZ}
Q.~L.~Chen, C.~X.~Miao and Z.~F.~Zhang, Global well-posedness for
compressible Navier-Stokes equations with highly oscillating initial
velocity, \textit{Commun. Pur. Appl. Math.}  {\bf{63}} (2010) 1173--1224.

\bibitem{DR1}
R.~Danchin, Global existence in critical spaces for
compressible Navier-Stokes equations, \textit{Invent. Math.} {\bf{141}} (2000) 579--614.

\bibitem{DX}
R.~Danchin and J.~Xu, Optimal time-decay estimates for the
compressible Navier-Stokes equations in the critical $L^{p}$ framework, \textit{Arch. Ration. Mech. Anal.} {\bf{224}} (2017) 53--90.

\bibitem{FK}
H.~Fujita and T.~Kato, On the Navier-Stokes initial value problem I, \textit{Arch. Ration. Mech. Anal.}
{\bf{16}} (1964) 269--315.

\bibitem{HB}
B.~Haspot, Existence of global strong solutions in critical
spaces for barotropic viscous fluids, \textit{Arch. Ration. Mech. Anal.} {\bf{202}} (2011)
427--460.

\bibitem{HD}
D.~Hoff, Global solutions of the Navier-Stokes equations for
multidimensional compressible flow with discontinuous initial data, \textit{J. Differential Equations}  {\bf{120}} (1995) 215--254.

\bibitem{HL}
C.~C.~Hao and H.~L.~Li, Global existence for compressible Navier-Stokes-Poisson equations in three and higher dimensions,
\textit{J. Differential Equations} {\bf{246}} (2009) 4791--4812.

\bibitem{KY}
H.~Kozono and M.~Yamazaki, Semilinear heat equations and the Navier-Stokes equations with
distributions in new function spaces as initial data, \textit{Comm. Part. Differ. Equ.} {\bf{19}} (1994) 959--1014.

\bibitem{LMZ}
H.~L.~Li, A.~Matsumura and G.~J.~Zhang, Optimal decay rate of the compressible
Navier-Stokes-Poisson system in $\mathbb{R}^{3}$, \textit{Arch. Ration. Mech. Anal.} {\bf {196}} (2010) 681--713.

\bibitem{LZ}
H.~L.~Li and T.~Zhang, Large time behavior of solutions to 3D compressible Navier-Stokes-Poisson system, \textit{Sci. China
Math.} {\bf{55}} (2012) 159--177.

\bibitem{MRS}
P.~A.~Markowich, C.~A.~Ringhofer and C.~Schmeiser, \textit{Semiconductor equations}
(Springer-Verlag, New York 1990).

\bibitem{SS}
V.~Sohinger and R.~M.~Strain, The Boltzmann equation, Besov spaces, and optimal time decay rates in $\mathbb{R}^{n}_{x}$, \textit{Adv. Math.}
{\bf 261} (2014) 274--332.

\bibitem{WYJ}
Y.~J.~Wang, Decay of the Navier-Stokes-Poisson equations,
\textit{J. Differential Equations} {\bf{253}} (2012) 273--297.

\bibitem{WW1}
W.~K.~Wang and Z.~G.~Wu, Pointwise estimates of solution for the Navier-Stokes-Poisson
equations in multidimensions, \textit{J. Differential Equations} {\bf{248}} (2010) 1617--1636.

\bibitem{WW2}
Y.~Z.~Wang and K.~Y.~Wang, Asymptotic behavior of classical solutions to
the compressible Navier-Stokes-Poisson equations in three and higher dimensions, \textit{J. Differential Equations} {\bf{259}} (2015) 25--47.

\bibitem{X1}
J.~Xu, Relaxation-time limit in the isothermal hydrodynamic model for semiconductors,
\textit{SIAM J. Math. Anal.} {\bf 40} (2009), 1979--1991.

\bibitem{XK1}
J.~Xu and S.~Kawashima, Frequency-localization Duhamel principle and its application to the optimal decay of dissipative systems in low dimensions,
\textit{J. Differential Equations} {\bf 261} (2016) 2670--2701.

\bibitem{XK2}
J.~Xu and S.~Kawashima, The optimal decay estimates on the framework of Besov spaces for generally dissipative systems, \textit{Arch. Ration. Mech. Anal.} {\bf 218} (2015) 275--315.
\end{thebibliography}
\end{document}